\documentclass[11pt]{article}
\usepackage[tbtags]{amsmath}
\usepackage{amssymb}
\usepackage{amsthm}
\usepackage[misc]{ifsym}
\usepackage{cases}
\usepackage{mathrsfs}
\usepackage{ulem}
\usepackage{color}
\usepackage[colorlinks,linkcolor=black,anchorcolor=black,citecolor=black]{hyperref}

\numberwithin{equation}{section}
\normalsize

\title{\bf An $\alpha$-Potential Game Approach to $N$-Player Stochastic Linear-Quadratic Differential Games\thanks{This work is supported by National Natural Science Foundations of China (12471419, 12271304), and Shandong Provincial Natural Science Foundation (ZR2024ZD35).}}

\author{\normalsize Chenhui Hao\thanks{\it School of Mathematics, Shandong University, Jinan, P.R. China, E-mail: 202411922@mail.sdu.edu.cn},\quad Jingtao Shi\thanks{\it Corresponding author. School of Mathematics, Shandong University, Jinan, P.R. China, E-mail: shijingtao@sdu.edu.cn}}

\newtheorem{mypro}{Proposition}[section]
\newtheorem{mythm}{Theorem}[section]
\newtheorem{mydef}{Definition}[section]
\newtheorem{mylem}{Lemma}[section]
\newtheorem{Remark}{Remark}[section]
\newtheorem{mycor}{Corollary}[section]
\newtheorem{assumption}{Assumption}[section]
\newtheorem{example}{Example}[section]

\begin{document}
	
	\maketitle
	
	\begin{abstract}
	This paper studies $N$-player stochastic linear-quadratic (LQ) differential games from the perspective of $\alpha$-potential games. We first consider a closed-loop LQ game with multiplicative noise, where both the drift and the diffusion coefficients depend linearly on the state and the full control vector. For this model, we derive probabilistic and partial differential equation (PDE) representations for the first- and second-order linear derivatives of the players' cost function and prove the equivalence between them. We then develop an open-loop stochastic LQ \(\alpha\)-potential game framework. Using the linear derivative construction, we build an \(\alpha\)-potential function and derive an explicit upper bound for the approximation parameter \(\alpha\) in terms of the model coefficients and the admissible control radius. Moreover, the minimization of the \(\alpha\)-potential function is reduced to a finite-dimensional stochastic control problem by augmenting the state with the variational process, which yields an open-loop \(\alpha\)-Nash equilibrium. As an application, we revisit a network LQ game considered in \cite{GuoLiZhang2025} and show that the feedback representation obtained from our approach coincides with the feedback in the existing conditional McKean--Vlasov approach, while our characterization follows directly from a standard finite-dimensional LQ control problem. 
    \end{abstract}
	
	\noindent{\bf Keywords.}
	Stochastic differential games; linear-quadratic control; $\alpha$-potential games; approximate Nash equilibrium; linear derivatives.
	
	\medskip
	
\section{Introduction}
	
	Stochastic differential games provide a classical framework for studying strategic interactions among multiple decision makers in continuous time. Since the foundational works of von Neumann and Morgenstern \cite{VonNeumannMorgenstern1944} and Nash \cite{Nash1950}, Nash equilibrium has become one of the central concepts in noncooperative game theory, and has been widely used in economics, engineering and management \cite{CachonZipkin1999}. However, for \(N\)-player stochastic differential games, the computation of Nash equilibria is generally difficult even in the {\it linear-quadratic} (LQ) setting.
	
	Potential games offer a different route to equilibrium analysis. Introduced by Monderer and Shapley \cite{MondererShapley1996}, potential games are games in which every player's unilateral cost variation can be represented by the variation of a single potential function. This structure reduces the search for Nash equilibrium to the optimization of only one function. The potential game paradigm has found applications in many areas involving wireless networks \cite{Yamamoto2015}, power allocation \cite{CanalesGallego2010} and offshore wind energy \cite{TaoFeijooLorenzo2024}. Recently, Guo and Zhang \cite{GuoZhang2025} developed a dynamic potential game framework by introducing a notion of linear derivatives for cost functions with respect to players' strategies. This derivative notion allows them to characterize when a dynamic game admits an exact potential function through the symmetry of mixed second-order linear derivatives.
	
	However, exact potential structures are often restrictive in dynamic games. In heterogeneous stochastic differential games, players may have different cost functions and their controls are coupled through the state dynamics, making it difficult to represent all unilateral deviations by a single exact potential function.
	
	The recently developed \(\alpha\)-potential framework relaxes the exact potential structure by allowing an error \(\alpha\) between unilateral cost variations and variations of a single \(\alpha\)-potential function. This framework has been developed for general dynamic games in \cite{GuoLiZhang2025}; it shows that minimizing an \(\alpha\)-potential function yields an \(\alpha\)-Nash equilibrium and that the parameter \(\alpha\) can be estimated through the asymmetry of mixed second-order linear derivatives. \cite{GuoLiZhangBSDE2025} develop rigorous estimates for \(\alpha\) via a {\it backward stochastic differential equation} (BSDE) approach, while \cite{GuoLiZhangJump2025} applies this framework to distributed games with jumps. Recent developments also include Markov \(\alpha\)-potential games \cite{GuoLiMaheshwariSastryWu2026}, applications to decentralized control of connected and automated vehicles \cite{DiHuWangZhang2025}, and a limit theory connecting finite-player \(\alpha\)-potential games with mean field potential games \cite{GuoWangZhang2026}.

	Although the \(\alpha\)-potential framework provides a useful tool for studying approximate Nash equilibria, its application to stochastic LQ differential games remains limited. Recent work has studied independent policy-gradient learning for stochastic LQ games under distributed policies \cite{PlankZhang2026}. In that setting, each player's policy depends only on its own state, which is exploited in the convergence analysis of the learning algorithm. Our focus is different. We consider stochastic LQ games in which the state dynamics may depend on the full control vector and may contain controlled diffusion terms. Thus the game is not restricted to the distributed policy structure.
	
	\begin{itemize}
	\item First, we extend the closed-loop LQ framework beyond the additive-noise setting considered in \cite{GuoZhang2025}. More precisely, we consider stochastic LQ games with multiplicative noise. In this setting, we establish probabilistic and {\it partial differential equation} (PDE) representations for the first- and second-order linear derivatives. We further prove the equivalence between these probabilistic and PDE representations. 
		
	\item Second, we develop an open-loop stochastic LQ $\alpha$-potential game framework. Using the linear-derivative construction of $\alpha$-potential functions, we construct an $\alpha$-potential function for the open-loop LQ game and derive an explicit global upper bound for the approximation parameter $\alpha$. We then study the minimization of the $\alpha$-potential function. In the open-loop LQ setting, inspired by \cite{GuoLiZhangJump2025} for distributed game, we exploit the linear identity $X^{ru}=X^u-(1-r)\bar Y^u$ to rewrite the same potential minimization problem as a standard finite-dimensional LQ control problem. This enables us to apply the classical verification theorem to minimize the potential function, and hence to obtain an $\alpha$-Nash equilibrium.
		
	\item Third, we apply the minimization method of open-loop \(\alpha\)-potential function to the network LQ example studied in \cite{GuoLiZhang2025}. This example illustrates how the preceding construction can simplify the analysis of a concrete LQ game. In \cite{GuoLiZhang2025}, the minimizer is described by introducing an auxiliary uniform random variable \(\tau\) and using conditional expectations involving \(X^{\tau,u}\). By contrast, we solve the same minimization problem directly as a finite-dimensional LQ control problem. The control obtained from our approach is shown to coincide with the control obtained in \cite{GuoLiZhang2025}. Thus, for this LQ example, our approach gives a more direct finite-dimensional characterization of the minimizer.
	\end{itemize}	
	
    The \(\alpha\)-potential function constructed through linear derivatives involves variational processes of the state with respect to policy perturbations. Hence its
minimization naturally requires an augmented state consisting of the original state and these variational processes, and the resulting minimizer is a feedback of the augmented state rather than of the original state alone. Since this does not fit the  closed-loop strategy class, we perform the minimization analysis in the open-loop LQ setting.
 
	The rest of the paper is organized as follows. Section 2 introduces the preliminaries on potential games, $\alpha$-potential games and linear derivatives. Section 3 studies the linear derivative of closed-loop stochastic LQ game with multiplicative noise. Section 4 develops the open-loop LQ $\alpha$-potential framework and its minimization. Section 5 applies this method to the network LQ example in \cite{GuoLiZhang2025}. Section 6 gives some concluding remarks.
	
\subsection*{Notation used in this paper}
	
	Let \(E\) be a Euclidean space with its norm denoted by $|\cdot|$. For \(1\le p<\infty\), \(L^p(0,T;E)\) denotes the space of \(p\)-th power Lebesgue integrable \(E\)-valued functions, and \(L^\infty(0,T;E)\) denotes the space of essentially bounded measurable \(E\)-valued functions. We write \(L^2(\Omega;E)\) for the space of square-integrable \(E\)-valued random variables. For \(p\ge1\),
	\begin{align*}
		\mathcal S^p([t,T];E)&:=\bigg\{X:\ X\text{ is progressively measurable and }\mathbb E\bigg[\sup_{s\in[t,T]}|X_s|^p\bigg]<\infty\bigg\},\\
		\mathcal H^p([t,T];E)&:=\left\{X:\ X\text{ is progressively measurable and }\mathbb E\left[\int_t^T|X_s|^pds\right]<\infty\right\}.
	\end{align*}
We denote by \(\mathbb S^n\) the set of \(n\times n\) symmetric matrices.
	
\section{Preliminaries}\label{sec:preliminaries}

	We first recall the analytical framework of dynamic  \(\alpha\)-potential games developed in \cite{GuoLiZhang2025,GuoZhang2025}, together with several relevant results that will be used throughout the paper.
	
\subsection{Open-loop and closed-loop strategy classes}
	
	Let \(S\) be the state space and $G=(\mathcal I_N,S,(\mathcal A_i)_{i\in\mathcal I_N},(J_i)_{i\in\mathcal I_N})$ be a general $N$-player game, where $\mathcal I_N\equiv\{1,\ldots,N\}$, $\mathcal A^{(N)}:=\prod_{i=1}^N\mathcal A_i$. Here \(\mathcal A_i\) is the set of admissible controls of player \(i\), and \(J_i:\mathcal A^{(N)}\to\mathbb R\) is player \(i\)'s cost function. For \(a\equiv(a_i)_{i\in\mathcal I_N}\in\mathcal A^{(N)}\), we write \(a_{-i}:=(a_j)_{j\ne i}\in\mathcal A_{-i}^{(N)}\), where $\mathcal A_{-i}^{(N)}:=\prod_{j\ne i}\mathcal A_j$. For a fixed initial state $s_0\in S$, a strategy profile $a^*\in\mathcal A^{(N)}$ is an open-loop Nash equilibrium if the sets $\mathcal A_i$ are open-loop control classes and
	\begin{equation}\label{eq:ol-nash}
		J_i(a_i^*,a_{-i}^*)\leq J_i(a_i,a_{-i}^*), \quad \forall i\in\mathcal I_N,\ a_i\in\mathcal A_i.
	\end{equation}
It is an $\varepsilon$-open-loop Nash equilibrium if the right-hand side in \eqref{eq:ol-nash} is replaced by $J_i(a_i,a_{-i}^*)+\varepsilon$.
	
	For closed-loop games, let $G=(\mathcal I_N,S,(A_i)_{i\in\mathcal I_N},\pi^{(N)},(V_i)_{i\in\mathcal I_N}),\pi^{(N)}:=\prod_{i=1}^N\pi_i$, where $A_i$ is the action set of player $i$, $\pi_i$ is the set of Borel measurable functions $\phi_i:S\to A_i$ and $V_i:S\times\pi^{(N)}\to\mathbb R$ is player $i$'s cost function. For $s\in S$, write $V_i^s(\phi):=V_i(s,\phi)$. A policy profile $\phi^*\in\pi^{(N)}$ is a closed-loop Nash equilibrium with initial state $s_0$ if
	\begin{equation*}
		V_i^{s_0}(\phi_i^*,\phi_{-i}^*)\leq V_i^{s_0}(\phi_i,\phi_{-i}^*),\quad \forall i\in\mathcal I_N,\ \phi_i\in\pi_i.
	\end{equation*}
	
\subsection{$\alpha$-potential games}

	\begin{mydef}[$\alpha$-potential game]
		The game $G=(\mathcal I_N,S,(\mathcal A_i)_{i\in\mathcal I_N},(J_i)_{i\in\mathcal I_N})$ is an \(\alpha\)-potential game if there exist
		\(\alpha\geq0\) and \(\Phi:\mathcal A^{(N)}\to\mathbb R\) such that
		\begin{equation*}
			\left|J_i(a_i',a_{-i})-J_i(a_i,a_{-i})-\big[\Phi(a_i',a_{-i})-\Phi(a_i,a_{-i})\big]\right|\leq \alpha,
		\end{equation*}
		for all \(i\in\mathcal I_N\), \(a_i,a_i'\in\mathcal A_i\) and \(a_{-i}\in\mathcal A_{-i}^{(N)}:=\prod_{j\ne i}\mathcal A_j\). The function
		\(\Phi\) is called an open-loop \(\alpha\)-potential function. The same definition extends to closed-loop policies of game $G=(\mathcal I_N,S,(A_i)_{i\in\mathcal I_N},
		\pi^{(N)},(V_i)_{i\in\mathcal I_N})$ by replacing \(\mathcal A_i\) with \(\pi_i\) and replacing \(J_i\) with \(V_i^{s_0}\) for a fixed initial state \(s_0\in S\), and 
        we call $ G$ a closed-loop $\alpha$-potential game.
	\end{mydef}
	
	In the special case \(\alpha=0\), we have $J_i(a_i',a_{-i})-J_i(a_i,a_{-i})=\Phi(a_i',a_{-i})-\Phi(a_i,a_{-i})$, and the game is called a potential game. The next proposition explains the main advantage of this framework: once an \(\alpha\)-potential function is available, minimizing it yields an approximate Nash equilibrium, with the approximation error explicitly controlled by \(\alpha\).
	
	\begin{mypro}
	    If \(G\) is an open-loop \(\alpha\)-potential game with \(\alpha\)-potential function \(\Phi\), and \(a^\varepsilon\in\mathcal A^{(N)}\) satisfies
	    \[
	    \Phi(a^\varepsilon)\leq \inf_{a\in\mathcal A^{(N)}}\Phi(a)+\varepsilon,
	    \]
	    then \(a^\varepsilon\) is an \((\alpha+\varepsilon)\)-open-loop Nash equilibrium. The same conclusion holds for closed-loop games by replacing
	    \(\mathcal A^{(N)}\) with \(\pi^{(N)}\).
	\end{mypro}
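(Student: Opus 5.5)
The plan is to argue directly from the definition of an $\alpha$-potential game: a unilateral deviation from $a^\varepsilon$ changes player $i$'s cost by almost the same amount as it changes $\Phi$, and $a^\varepsilon$ is nearly a global minimizer of $\Phi$.

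Fix $i\in\mathcal I_N$ and an arbitrary $a_i\in\mathcal A_i$. Apply the defining inequality of the $\alpha$-potential game with the pair $(a^\varepsilon_i,a_i)$ and the fixed profile $a^\varepsilon_{-i}$. Dropping the absolute value in the appropriate direction gives
\begin{equation*}
J_i(a_i^\varepsilon,a_{-i}^\varepsilon)-J_i(a_i,a_{-i}^\varepsilon)\le \Phi(a_i^\varepsilon,a_{-i}^\varepsilon)-\Phi(a_i,a_{-i}^\varepsilon)+\alpha .
\end{equation*}
The profile $(a_i,a^\varepsilon_{-i})$ lies in $\mathcal A^{(N)}$, so $\inf_{a\in\mathcal A^{(N)}}\Phi(a)\le\Phi(a_i,a^\varepsilon_{-i})$. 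Combined with the $\varepsilon$-optimality of $a^\varepsilon$, this yields
\begin{equation*}
\Phi(a^\varepsilon)-\Phi(a_i,a^\varepsilon_{-i})\le\varepsilon .
\end{equation*}
Substituting into the previous display gives $J_i(a^\varepsilon_i,a^\varepsilon_{-i})\le J_i(a_i,a^\varepsilon_{-i})+\alpha+\varepsilon$. Since $i$ and $a_i$ were arbitrary, $a^\varepsilon$ is an $(\alpha+\varepsilon)$-open-loop Nash equilibrium.

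The closed-loop case is verbatim the same argument, with $J_i$ replaced by $V_i^{s_0}$ and $\mathcal A^{(N)}$ replaced by $\pi^{(N)}$, using the closed-loop version of the definition.

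There is no real obstacle; the only points to watch are the following.
\begin{itemize}
\item The infimum of $\Phi$ must be finite (or at least not $-\infty$) for the hypothesis to be meaningful; this is implicit in the existence of $a^\varepsilon$.
\item The deviated profile $(a_i,a^\varepsilon_{-i})$ must be admissible. This holds by the product structure of $\mathcal A^{(N)}$.
\end{itemize}
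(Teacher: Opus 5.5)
Your argument is correct: applying the $\alpha$-potential inequality to the deviation from $a_i$ to $a_i^\varepsilon$ against $a_{-i}^\varepsilon$ and combining it with $\Phi(a^\varepsilon)\le\Phi(a_i,a_{-i}^\varepsilon)+\varepsilon$ gives exactly the $(\alpha+\varepsilon)$-Nash inequality, and the closed-loop case goes through verbatim. The paper states this proposition without proof, as a result recalled from the cited $\alpha$-potential literature, and your direct argument is the standard one.
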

	
\subsection{Linear derivatives}

	To construct an $\alpha$-potential function for a given game, \cite{GuoZhang2025} introduce a new notion of derivative with respect to policies. For each \(i\in\mathcal I_N\), denote by \(\operatorname{span}(\mathcal A_i)\) the vector space of all finite linear combinations of elements in \(\mathcal A_i\):
	\[
	\operatorname{span}(\mathcal A_i):=\left\{\sum_{\ell=1}^m c_\ell a_i^{(\ell)}\ \middle|\ m\in\mathbb N,\ c_\ell\in\mathbb R,\ a_i^{(\ell)}\in\mathcal A_i\right\}.
	\]
	
	\begin{mydef}[Linear derivatives]
		Let \(\mathcal A^{(N)}=\prod_{i=1}^N\mathcal A_i\) be convex and let \(f:\mathcal A^{(N)}\to\mathbb R\). We say that \(f\) has a linear derivative with
		respect to \(\mathcal A_i\) if there exists $\frac{\delta f}{\delta a_i}:\mathcal A^{(N)}\times\operatorname{span}(\mathcal A_i)\to\mathbb R$
		such that \(\frac{\delta f}{\delta a_i}(a;\cdot)\) is linear and
		\begin{equation*}
			\lim_{\epsilon\downarrow0}\frac{f(a_i+\epsilon(a_i'-a_i),a_{-i})-f(a_i,a_{-i})}{\epsilon}=\frac{\delta f}{\delta a_i}(a;a_i'-a_i)
		\end{equation*}
		for all \(a=(a_i,a_{-i})\in\mathcal A^{(N)}\) and \(a_i'\in\mathcal A_i\).
		
		For \(i,j\in\mathcal I_N\), we say that \(f\) has second-order linear derivatives with respect to \(\mathcal A_i\times\mathcal A_j\) if \(f\) has first-order linear
		derivatives with respect to both \(\mathcal A_i\) and \(\mathcal A_j\), and if there exists $\frac{\delta^2 f}{\delta a_i\delta a_j}:
		\mathcal A^{(N)}\times\operatorname{span}(\mathcal A_i)\times\operatorname{span}(\mathcal A_j)\to\mathbb R$
		such that \(\frac{\delta^2 f}{\delta a_i\delta a_j}(a;\cdot,\cdot)\) is bilinear and, for each fixed \(a_i'\in\operatorname{span}(\mathcal A_i)\),
		\(\frac{\delta^2 f}{\delta a_i\delta a_j}(\cdot;a_i',\cdot)\) is a linear derivative of \(\frac{\delta f}{\delta a_i}(\cdot;a_i')\) with respect to \(\mathcal A_j\).
	\end{mydef}
	
	The following theorem constructs an \(\alpha\)-potential function by using linear derivatives and quantifies the approximation parameter \(\alpha\) in terms of the discrepancy between the mixed second-order linear derivatives.
	
	\begin{mythm}[{\cite{GuoLiZhang2025,GuoLiZhangJump2025}}]
		Let \(\mathcal A^{(N)}\) be convex. Suppose that, for all \(i,j\in\mathcal I_N\),\(J_i\) has second-order linear derivatives with respect to 
        \(\mathcal A_i\times\mathcal A_j\). Assume that for all \(z,a\in\mathcal A^{(N)}\), \(a_i',\widetilde a_i'\in\mathcal A_i\), and
		\(a_j''\in\mathcal A_j\), the following two conditions hold:
		\begin{align*}
			&\sup_{r,\epsilon\in[0,1]}\left|\frac{\delta^2J_i}{\delta a_i\delta a_j}\big(z+r(a^\epsilon-z);a_i',a_j''\big)\right|<\infty,\\
			&[0,1]^N\ni\epsilon\longmapsto\frac{\delta^2J_i}{\delta a_i\delta a_j}\big(z+\epsilon\cdot(a-z);a_i',a_j''\big)\quad\text{is continuous at }0.
		\end{align*}
		where
		\[
		a^\epsilon:=(a_i+\epsilon(\widetilde a_i'-a_i),a_{-i}),\qquad z+\epsilon\cdot(a-z):=(z_i+\epsilon_i(a_i-z_i))_{i=1}^N.
		\]
		Fix \(z\in\mathcal A^{(N)}\) and define
		\begin{equation}\label{eq:potential-linear-derivative}
			\Phi(a)=\int_0^1\sum_{j=1}^N\frac{\delta J_j}{\delta a_j}\big(z+r(a-z);a_j-z_j\big)\,dr .
		\end{equation}
		Then \(\Phi\) is an \(\alpha\)-potential function and
		\begin{equation*}
			\alpha\leq2\sup_{i\in\mathcal I_N,\ a_i'\in\mathcal A_i,\ a,a''\in\mathcal A^{(N)}}\sum_{j=1}^N
			\left|\frac{\delta^2 J_i}{\delta a_i\delta a_j}(a;a_i',a_j'')-\frac{\delta^2 J_j}{\delta a_j\delta a_i}(a;a_j'',a_i')\right|.
		\end{equation*}
		In particular, when $0 \in \mathcal A^{(N)}$, a sharper bound is available
		\begin{equation}\label{eq:alpha-estimate}
			\alpha\leq\sup_{i\in\mathcal I_N,\ a_i'\in\mathcal A_i,\ a,a''\in\mathcal A^{(N)}}\sum_{j=1}^N
			\left|\frac{\delta^2 J_i}{\delta a_i\delta a_j}(a;a_i',a_j'')-\frac{\delta^2 J_j}{\delta a_j\delta a_i}(a;a_j'',a_i')\right|.
		\end{equation}
	\end{mythm}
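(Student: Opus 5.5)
The plan is to compare, for a fixed player $i$ and fixed $a_{-i}$, the unilateral variation of $J_i$ with that of $\Phi$, writing both as integrals of first-order linear derivatives along the segment from $a$ to $a'=(a_i',a_{-i})$. For $J_i$ this is the fundamental theorem of calculus:
\[
J_i(a')-J_i(a)=\int_0^1\frac{\delta J_i}{\delta a_i}\big(a+s(a'-a);a_i'-a_i\big)\,ds .
\]
This needs the map $s\mapsto J_i(a+s(a'-a))$ to be absolutely continuous. It has one-sided derivatives by the definition of linear derivative, and those derivatives are bounded by the first assumption, so this step is fine.

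For $\Phi$, the key step is to show that $\Phi$ has a linear derivative in $a_i$ and to compute it. I will differentiate under the integral in \eqref{eq:potential-linear-derivative}. The integrand is $\frac{\delta J_j}{\delta a_j}(z+r(a-z);a_j-z_j)$, which depends on $a_i$ in two ways. For $j=i$ it depends through both the base point and the linear argument; for $j\neq i$ it depends only through the base point. The chain rule, using the second-order linear derivatives, gives
\[
\frac{\delta\Phi}{\delta a_i}(a;h_i)=\int_0^1\Big[\frac{\delta J_i}{\delta a_i}(z+r(a-z);h_i)+\sum_{j}r\,\frac{\delta^2 J_j}{\delta a_j\delta a_i}(z+r(a-z);a_j-z_j,h_i)\Big]dr .
\]
Justifying this exchange of limit and integral is where the uniform bound and continuity hypotheses enter, via dominated convergence. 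I expect this to be the main technical obstacle, because it requires a mean-value argument in the perturbation parameter with a remainder controlled uniformly in $r$.

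Next I compare this with $\frac{\delta J_i}{\delta a_i}(a;h_i)$. Apply the fundamental theorem of calculus to $r\mapsto r\,\frac{\delta J_i}{\delta a_i}(z+r(a-z);h_i)$. Its derivative is $\frac{\delta J_i}{\delta a_i}(\cdot;h_i)+r\sum_j\frac{\delta^2 J_i}{\delta a_i\delta a_j}(\cdot;h_i,a_j-z_j)$, where the sum comes from moving all players simultaneously. This uses the joint continuity assumption in $\epsilon\in[0,1]^N$ to combine the $N$ partial directions. It yields
\[
\frac{\delta\Phi}{\delta a_i}(a;h_i)-\frac{\delta J_i}{\delta a_i}(a;h_i)=\int_0^1 r\sum_j\Big[\frac{\delta^2 J_j}{\delta a_j\delta a_i}(\cdot;a_j-z_j,h_i)-\frac{\delta^2 J_i}{\delta a_i\delta a_j}(\cdot;h_i,a_j-z_j)\Big]dr .
\]

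Finally I integrate this difference along $s\in[0,1]$ with $h_i=a_i'-a_i$. By bilinearity, each term splits as the bracket evaluated at $(a_i',a_j-z_j)$ minus the bracket at $(a_i,a_j-z_j)$, and further into pieces at $a_j$ and at $z_j$. This gives at most four evaluations of the asymmetry sum, each at arguments in $\mathcal A_i\times\mathcal A^{(N)}$. With $\int_0^1 r\,dr=1/2$, the total is bounded by $\tfrac12\cdot4=2$ times the supremum.

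When $0\in\mathcal A^{(N)}$, I take $z=0$. Then $a_j-z_j=a_j$, and only two evaluations remain, at $a_i'$ and at $a_i$. This gives the sharper constant $1$ in \eqref{eq:alpha-estimate}.
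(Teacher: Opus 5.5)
Your argument is correct and is essentially the standard proof from the cited works; the paper itself only quotes this theorem. That proof computes $\frac{\delta\Phi}{\delta a_i}$ by differentiating under the $r$-integral and compares it with $\frac{\delta J_i}{\delta a_i}$ via the fundamental theorem of calculus for $r\mapsto r\,\frac{\delta J_i}{\delta a_i}(z+r(a-z);h_i)$. It then integrates along the unilateral deviation and expands bilinearly, which gives $4\cdot\frac12=2$, or $2\cdot\frac12=1$ for the choice $z=0$.

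One point needs to be made precise. For $j\neq i$, the domination step requires a bound on $\frac{\delta^2 J_j}{\delta a_j\delta a_i}$ at the points $z+r(a^\epsilon-z)$ with $a^\epsilon$ perturbed in the $i$-th component. The first hypothesis does not literally give this, since there it controls $\frac{\delta^2 J_i}{\delta a_i\delta a_j}$ instead. The bound does follow from that hypothesis combined with finiteness of the asymmetry supremum, and you may assume that supremum is finite since otherwise the stated estimate is vacuous.
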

	
	If the mixed second-order linear derivatives of the cost functions are symmetric, i.e.
	\begin{equation*}
		\frac{\delta^2 J_i}{\delta a_i\delta a_j}(a;a_i',a_j'')=\frac{\delta^2 J_j}{\delta a_j\delta a_i}(a;a_j'',a_i'),
	\end{equation*}
	for all \(i,j\in\mathcal I_N\), then \(\alpha=0\), and \(\Phi\) in \eqref{eq:potential-linear-derivative} is a potential function.
	For closed-loop games, the same statement is applied with \(\mathcal A_i=\pi_i\) and \(J_i=V_i^{s_0}\). 
	
\section{Representations of linear derivatives for closed-loop stochastic LQ games}\label{sec:problem-formulation}
	
	We now specialize the closed-loop game introduced in Section~\ref{sec:preliminaries} to an LQ stochastic differential game. Throughout this section, we work on a complete filtered probability space $(\Omega,\mathcal F,\mathbb F,\mathbb P),$ where $\mathbb F\equiv(\mathcal F_s)_{s\in[0,T]}$ satisfies the usual conditions and supports a one-dimensional Brownian motion \(W\equiv(W_s)_{s\in[0,T]}\). The multidimensional case can be treated in the same way. Consider $G=\bigl(\mathcal I_N,S,(A_i)_{i\in\mathcal I_N},\pi^{(N)},(V_i)_{i\in\mathcal I_N}\bigr)$, where \(\mathcal I_N\equiv\{1,\ldots,N\}\), \(S=[0,T]\times\mathbb R^n\), \(A_i=\mathbb R^{k_i}\), \(\pi^{(N)}:=\prod_{i=1}^N\pi_i\) is the set of closed-loop policy profiles and \(k:=\sum_{i=1}^Nk_i\). Let $E_i\in\mathbb R^{k\times k_i}$ be the block matrix whose $i$-th block is the $k_i\times k_i$ identity matrix, corresponding to player $i$'s action, and whose other blocks are zero. For each player \(i\in\mathcal I_N\), define
	\[
    \mathcal K_i:=L^2(0,T;\mathbb R^{k_i\times n}),\qquad\mathcal M_i:=L^2(0,T;\mathbb R^{k_i}),
    \]
and let \(\pi_i\) be the set of feedback policies of the form
    \[
    \phi_i(t,x)=K_i(t)x+m_i(t),\qquad t\in[0,T],\qquad (K_i,m_i)\in\mathcal K_i\times\mathcal M_i.
    \] 
With an abuse of notation, we identify \(\phi_i\in\pi_i\) with the pair \((K_i,m_i)\), and identify \(\phi=(\phi_i)_{i\in\mathcal I_N}\in\pi^{(N)}\) with
\((K,m)=((K_i)_{i\in\mathcal I_N},(m_i)_{i\in\mathcal I_N})\). We write
	\[
    K_s:=\begin{pmatrix}K_1(s)\\ \vdots\\ K_N(s)\end{pmatrix},\qquad m_s:=\begin{pmatrix}m_1(s)\\ \vdots\\ m_N(s)\end{pmatrix}.
    \]
For each \((t,x)\in S\) and \(\phi=(K,m)\in\pi^{(N)}\), let \(X^{t,x,\phi}\) satisfy the closed-loop state equation
	\begin{equation}\label{eq:closed-state}
		\left\{
		\begin{aligned}
			dX_s^{t,x,\phi}={}&\Big((A_s+B_sK_s)X_s^{t,x,\phi}+B_sm_s+b_s\Big)\,ds \\
			&+\Big((C_s+D_sK_s)X_s^{t,x,\phi}+D_sm_s+\sigma_s\Big)\,dW_s,\qquad s\in[t,T],\\
			X_t^{t,x,\phi}={}&x.
		\end{aligned}
		\right.
	\end{equation}
	Define player \(i\)'s cost function \(V_i:S\times\pi^{(N)}\to\mathbb R\) by
	\begin{equation}\label{eq:LQ-cost}
		\begin{aligned}
			V_i^{t,x}(\phi)=\mathbb E\bigg[&\int_t^T \frac12\Big( (X_s^{t,x,\phi})^\top Q_i(s)X_s^{t,x,\phi}
            +\big(K_sX_s^{t,x,\phi}+m_s\big)^\top R_i(s)\big(K_sX_s^{t,x,\phi}+m_s\big) \\
			&+2(X_s^{t,x,\phi})^\top S_i(s)\big(K_sX_s^{t,x,\phi}+m_s\big)+2q_i(s)^\top X_s^{t,x,\phi}\\
			&+2\rho_i(s)^\top\big(K_sX_s^{t,x,\phi}+m_s\big)\Big)ds +\frac12\Big((X_T^{t,x,\phi})^\top G_iX_T^{t,x,\phi}+2g_i^\top X_T^{t,x,\phi}\Big)\bigg].
		\end{aligned}
	\end{equation}
	We impose the following standard conditions on the coefficients of \eqref{eq:closed-state} and \eqref{eq:LQ-cost}.
	\begin{assumption}\label{ass:LQ-data}
		The state coefficients satisfy
		\[
        A,C\in L^\infty(0,T;\mathbb R^{n\times n}),\qquad B,D\in L^\infty(0,T;\mathbb R^{n\times k}),\qquad b,\sigma\in L^2(0,T;\mathbb R^n).
        \]
		For each \(i\in\mathcal I_N\), the cost coefficients satisfy
		\[
        Q_i\in L^\infty(0,T;\mathbb S^n),\qquad R_i\in L^\infty(0,T;\mathbb S^k),\qquad S_i\in L^\infty(0,T;\mathbb R^{n\times k}),
        \]
		\[
        q_i\in L^2(0,T;\mathbb R^n),\qquad\rho_i\in L^2(0,T;\mathbb R^k),\qquad G_i\in\mathbb S^n,\qquad g_i\in\mathbb R^n.
        \]
	\end{assumption}
	
	Set
	\begin{equation*}
		\bar A:=A+BK,\qquad\bar b:=b+Bm,\qquad\bar C:=C+DK,\qquad\bar\sigma:=\sigma+Dm.
	\end{equation*}
	By Assumption~\ref{ass:LQ-data} and \((K,m)\in\prod_{i=1}^N(\mathcal K_i\times\mathcal M_i)\),
	\[
    \bar A,\bar C\in L^2(0,T;\mathbb R^{n\times n}),\qquad\bar b,\bar\sigma\in L^2(0,T;\mathbb R^n).
    \]
	Hence, for every\((t,x)\) and \(\phi\in\pi^{(N)}\), \eqref{eq:closed-state} admits a unique strong solution in \(\mathcal S^2([t,T];\mathbb R^n)\), and \eqref{eq:LQ-cost} is well defined.
	
	We write
	\[
    B_i(t):=B(t)E_i,\qquad D_i(t):=D(t)E_i.
    \]

\subsection{Probabilistic representations of linear derivatives}\label{sec:probabilistic-representations}
	
    In this section, we derive the probabilistic representations in the LQ setting. Fix a policy profile \(\phi=(K,m)\in\pi^{(N)}\), and let \(X\equiv X^{t,x,\phi}\) be the
solution of
	\[
	\left\{
	\begin{aligned}
		dX_s&=(\bar A_sX_s+\bar b_s)\,ds+(\bar C_sX_s+\bar\sigma_s)\,dW_s,\\
		X_t&=x,
	\end{aligned}
	\right.
	\]
where \(\bar A=A+BK\), \(\bar b=b+Bm\), \(\bar C=C+DK\), and \(\bar\sigma=\sigma+Dm\). 
	
	Fix \(h\in\mathcal I_N\) and take a direction
	\[
	\phi_h'(s,x)=K_h'(s)x+m_h'(s),\qquad (K_h',m_h')\in\mathcal K_h\times\mathcal M_h.
	\]
	Recall that \(B_h:=BE_h\) and \(D_h:=DE_h\). The first-order variational process $Y_s^h$ satisfies
	\begin{equation*}
		\left\{
		\begin{aligned}
			dY_s^h={}&\left[\bar A_sY_s^h+B_h(s)(K_h'(s)X_s+m_h'(s))\right]ds\\
			&+\left[\bar C_sY_s^h+D_h(s)(K_h'(s)X_s+m_h'(s))\right]dW_s,\\
			Y_t^h={}&0.
		\end{aligned}
		\right.
	\end{equation*}
    Similarly to \cite{GuoZhang2025}, the standard moment estimates for linear {\it stochastic differential equations} (SDEs) yield the following representation.	
	\begin{mypro}[First-order probabilistic representation]\label{First-order probabilistic representation}
		In the LQ setting of Section~\ref{sec:problem-formulation}, for every \(i,h\in\mathcal I_N\), we have
		\begin{equation}\label{eq:first-prob-explicit}
			\begin{aligned}
				\frac{\delta V_i^{t,x}}{\delta\phi_h}(\phi;\phi_h')={}&\mathbb E\bigg[\int_t^T\Big\{\big[Q_i(s)X_s+S_i(s)(K_sX_s+m_s)+q_i(s)\big]^\top Y_s^h\\
				&\qquad+\big[R_i(s)(K_sX_s+m_s)+S_i(s)^\top X_s+\rho_i(s)\big]^\top\big[K_sY_s^h\\
				&\qquad+E_h(K_h'(s)X_s+m_h'(s))\big]\Big\}\,ds+(G_iX_T+g_i)^\top Y_T^h\bigg].
			\end{aligned}
		\end{equation}
	\end{mypro}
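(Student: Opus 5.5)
We compute the directional derivative directly from the definition of the linear derivative. We perturb player $h$'s policy along the segment $\phi^\epsilon:=(\phi_h+\epsilon(\phi_h'-\phi_h),\phi_{-h})$. By linearity in the direction it suffices to treat an arbitrary direction $\phi_h'=(K_h',m_h')$ and show that
\[
\epsilon^{-1}\big(V_i^{t,x}(\phi+\epsilon\,\phi_h')-V_i^{t,x}(\phi)\big)
\]
converges to the right-hand side of \eqref{eq:first-prob-explicit}. Here $\phi+\epsilon\phi_h'$ means that $(K_h,m_h)$ is replaced by $(K_h+\epsilon K_h',m_h+\epsilon m_h')$. Then $K^\epsilon=K+\epsilon E_hK_h'$ and $m^\epsilon=m+\epsilon E_hm_h'$, so $BK^\epsilon=BK+\epsilon B_hK_h'$ and similarly for $D$. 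This identification is legitimate because the cost is defined on all of $\mathcal K_h\times\mathcal M_h$, which is a linear space.

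\textbf{Step 1 (exact expansion of the state).} Let $X^\epsilon:=X^{t,x,\phi^\epsilon}$ and $\Delta^\epsilon:=(X^\epsilon-X)/\epsilon$. Subtracting the two linear SDEs gives
\[
d\Delta^\epsilon=\big[\bar A\Delta^\epsilon+B_h(K_h'X^\epsilon+m_h')\big]ds+\big[\bar C\Delta^\epsilon+D_h(K_h'X^\epsilon+m_h')\big]dW,\qquad \Delta^\epsilon_t=0.
\]
Hence $\Delta^\epsilon-Y^h$ solves a linear SDE whose forcing terms are $\epsilon B_hK_h'\Delta^\epsilon$ in the drift and $\epsilon D_hK_h'\Delta^\epsilon$ in the diffusion, where we used $X^\epsilon-X=\epsilon\Delta^\epsilon$.

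\textbf{Step 2 (moment estimates).} We use the standard $\mathcal S^2$ estimate for linear SDEs. The coefficients are in $L^\infty$ for $A,C,B,D$ and in $L^2(0,T)$ for $K,K',m,m'$; for time-dependent coefficients that are only square-integrable, the estimate follows from Itô's formula applied to $|\cdot|^2$ together with Gronwall's inequality with an integrable weight. We first obtain $\sup_{\epsilon\le1}\mathbb E\sup_s|X^\epsilon_s|^2<\infty$. This yields uniform bounds on $\Delta^\epsilon$, and then
\[
\mathbb E\sup_s|\Delta^\epsilon_s-Y^h_s|^2\le C\epsilon^2\to0 .
\]
The main obstacle is precisely this step. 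The feedback gains are only in $L^2$ in time, so $\bar A,\bar C$ are unbounded. In the diffusion one needs $\int|\bar C_s|^2ds<\infty$, which holds, and in the drift $\int|\bar A_s|ds<\infty$, which also holds. We will check that the Gronwall weight $|\bar A_s|+|\bar C_s|^2$ is integrable, and use BDG inequalities for the supremum.

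\textbf{Step 3 (expansion of the cost).} We write the integrand of \eqref{eq:LQ-cost} as a quadratic form in $(X,u)$ with $u=KX+m$, and likewise for the perturbed pair. Note that
\[
u^\epsilon-u=\epsilon\big(K\Delta^\epsilon+E_h(K_h'X^\epsilon+m_h')\big).
\]
We expand $\tfrac12(z^\epsilon)^\top M z^\epsilon-\tfrac12 z^\top Mz=(Mz)^\top(z^\epsilon-z)+\tfrac12(z^\epsilon-z)^\top M(z^\epsilon-z)$ and divide by $\epsilon$. The quadratic remainder is $O(\epsilon)$ in $L^1(\Omega\times[t,T])$ by Step 2, together with Cauchy--Schwarz using the $L^2$-in-time weights from $K$ and $K'$. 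The linear part converges to
\[
[Q_iX+S_iu+q_i]^\top Y^h+[R_iu+S_i^\top X+\rho_i]^\top\big[KY^h+E_h(K_h'X+m_h')\big].
\]
The terminal term is handled identically and gives $(G_iX_T+g_i)^\top Y^h_T$. Dominated convergence, or directly the $L^2$ convergence from Step 2, justifies passing to the limit under the expectation.

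Finally, the limit is linear in $(K_h',m_h')$, since $Y^h$ depends linearly on the direction. Taking the direction $\phi_h'-\phi_h$ therefore gives exactly the linear-derivative property, with \eqref{eq:first-prob-explicit} defined on $\operatorname{span}(\pi_h)$.
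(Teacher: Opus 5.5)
Your proposal is correct and follows essentially the route the paper has in mind. The paper gives no separate proof and only invokes the standard moment estimates for linear SDEs as in \cite{GuoZhang2025}; your argument, showing $\Delta^\epsilon\to Y^h$ in $\mathcal S^2$ at rate $O(\epsilon)$ and then expanding the quadratic cost to first order, is that argument written out, including the handling of the merely square-integrable feedback gains via the integrable Gronwall weight $|\bar A_s|+|\bar C_s|^2$, which the paper leaves implicit.
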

	
	Let \(\ell\in\mathcal I_N\) and take another direction
	\[
	\phi_\ell''(s,x)=K_\ell''(s)x+m_\ell''(s),\qquad (K_\ell'',m_\ell'')\in\mathcal K_\ell\times\mathcal M_\ell.
	\]
	The first-order variational process $Y_s^\ell$ satisfies
	\begin{equation*}
		\left\{
		\begin{aligned}
			dY_s^\ell={}&\left[\bar A_sY_s^\ell+B_\ell(s)(K_\ell''(s)X_s+m_\ell''(s))\right]ds\\
			&+\left[\bar C_sY_s^\ell+D_\ell(s)(K_\ell''(s)X_s+m_\ell''(s))\right]dW_s,\\
			Y_t^\ell={}&0,
		\end{aligned}
		\right.
	\end{equation*}
	The mixed second-order variational process $Z_s^{h,\ell}$ satisfies
	\begin{equation*}
		\left\{
		\begin{aligned}
			dZ_s^{h,\ell}={}&\left[\bar A_sZ_s^{h,\ell}+B_h(s)K_h'(s)Y_s^\ell+B_\ell(s)K_\ell''(s)Y_s^h\right]ds\\
			&+\left[\bar C_sZ_s^{h,\ell}+D_h(s)K_h'(s)Y_s^\ell+D_\ell(s)K_\ell''(s)Y_s^h\right]dW_s,\\
			Z_t^{h,\ell}={}&0.
		\end{aligned}
		\right.
	\end{equation*}
    Similarly to Proposition~\ref{First-order probabilistic representation}, we have the following result.
	
	\begin{mypro}[Second-order probabilistic representation]\label{Second-order probabilistic representation}
		In the LQ setting of Section~\ref{sec:problem-formulation}, for every \(i,h,\ell\in\mathcal I_N\),
		\begin{equation}\label{eq:second-prob-explicit}
			\begin{aligned}
				&\frac{\delta^2 V_i^{t,x}}{\delta\phi_h\delta\phi_\ell}(\phi;\phi_h',\phi_\ell'')
                ={}\mathbb E\bigg[\int_t^T\bigg\{(Y_s^\ell)^\top Q_i(s)Y_s^h\\
				&\qquad+\big[K_sY_s^\ell+E_\ell(K_\ell''(s)X_s+m_\ell''(s))\big]^\top R_i(s)\big[K_sY_s^h+E_h(K_h'(s)X_s+m_h'(s))\big]\\
				&\qquad+(Y_s^\ell)^\top S_i(s)\big[K_sY_s^h+E_h(K_h'(s)X_s+m_h'(s))\big]\\
				&\qquad+(Y_s^h)^\top S_i(s)\big[K_sY_s^\ell+E_\ell(K_\ell''(s)X_s+m_\ell''(s))\big]\\
				&\qquad+\big[Q_i(s)X_s+S_i(s)(K_sX_s+m_s)+q_i(s)\big]^\top Z_s^{h,\ell}\\
				&\qquad+\big[R_i(s)(K_sX_s+m_s)+S_i(s)^\top X_s+\rho_i(s)\big]^\top\big[K_sZ_s^{h,\ell}\\
				&\qquad+E_hK_h'(s)Y_s^\ell+E_\ell K_\ell''(s)Y_s^h\big]\bigg\}\,ds+(Y_T^\ell)^\top G_iY_T^h+(G_iX_T+g_i)^\top Z_T^{h,\ell}\bigg].
			\end{aligned}
		\end{equation}
	\end{mypro}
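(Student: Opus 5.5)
The plan is to differentiate the first-order representation \eqref{eq:first-prob-explicit} of Proposition~\ref{First-order probabilistic representation} once more, now along $\phi_\ell''$. Fix $\phi=(K,m)$, directions $\phi_h'$ and $\phi_\ell''$, and set
\[
\phi^\epsilon:=(\phi_\ell+\epsilon\phi_\ell'',\phi_{-\ell}),
\]
that is, $K^\epsilon=K+\epsilon E_\ell K_\ell''$ and $m^\epsilon=m+\epsilon E_\ell m_\ell''$. Let $X^\epsilon:=X^{t,x,\phi^\epsilon}$, and let $Y^{h,\epsilon}$ be the first-order variational process in direction $\phi_h'$ computed at $\phi^\epsilon$; its coefficients are $\bar A^\epsilon=\bar A+\epsilon B_\ell K_\ell''$, $\bar C^\epsilon=\bar C+\epsilon D_\ell K_\ell''$, and it is driven by $X^\epsilon$. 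By the definition of second-order linear derivatives, it suffices to show that $\epsilon\mapsto \frac{\delta V_i^{t,x}}{\delta\phi_h}(\phi^\epsilon;\phi_h')$ is differentiable at $\epsilon=0^+$, with derivative given by the right-hand side of \eqref{eq:second-prob-explicit}. Bilinearity in $(\phi_h',\phi_\ell'')$ then follows from the linearity of $Y^h$, $Y^\ell$ and $Z^{h,\ell}$ in their directions.

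\textbf{Step 1 (state derivatives).} I would prove the $\mathcal S^2$ limits
\[
(X^\epsilon-X)/\epsilon\to Y^\ell,\qquad (Y^{h,\epsilon}-Y^h)/\epsilon\to Z^{h,\ell}.
\]
The first is already implicit in Proposition~\ref{First-order probabilistic representation}. For the second, subtract the equations for $Y^{h,\epsilon}$ and $Y^h$ and divide by $\epsilon$. The difference quotient $\Delta^\epsilon$ then solves a linear SDE with coefficients $\bar A^\epsilon,\bar C^\epsilon$ and inhomogeneous terms
\[
B_\ell K_\ell''Y^{h,\epsilon}+B_hK_h'\,\frac{X^\epsilon-X}{\epsilon}
\]
in the drift, and the analogous $D$-terms in the diffusion. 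The remainder $\Delta^\epsilon-Z^{h,\ell}$ solves a linear SDE driven by terms of the form $\epsilon B_\ell K_\ell''\Delta^\epsilon$, $B_\ell K_\ell''(Y^{h,\epsilon}-Y^h)$ and $B_hK_h'\big((X^\epsilon-X)/\epsilon-Y^\ell\big)$. Standard $L^2$ (or $L^p$) estimates for linear SDEs then give convergence to zero.

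\textbf{Step 2 (cost derivative).} Write the integrand of \eqref{eq:first-prob-explicit} at $\phi^\epsilon$ as
\[
\langle a^\epsilon_s, Y^{h,\epsilon}_s\rangle+\langle c^\epsilon_s, K^\epsilon_sY^{h,\epsilon}_s+E_h(K_h'X^\epsilon_s+m_h')\rangle,
\]
where
\[
a^\epsilon=Q_iX^\epsilon+S_i(K^\epsilon X^\epsilon+m^\epsilon)+q_i,\qquad c^\epsilon=R_i(K^\epsilon X^\epsilon+m^\epsilon)+S_i^\top X^\epsilon+\rho_i .
\]
Each factor is affine in quantities whose $\epsilon$-derivatives are known:
\[
\partial_\epsilon(K^\epsilon X^\epsilon+m^\epsilon)=KY^\ell+E_\ell(K_\ell''X+m_\ell'').
\]
Differentiating the products by the product rule, and using symmetry of $Q_i,R_i$ to rearrange, produces exactly the terms of \eqref{eq:second-prob-explicit}. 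In particular, the derivative of $K^\epsilon Y^{h,\epsilon}$ contributes $KZ^{h,\ell}+E_\ell K_\ell''Y^h$, and the derivative of $E_hK_h'X^\epsilon$ contributes $E_hK_h'Y^\ell$. The quadratic terms $(Y^\ell)^\top Q_iY^h$, the $R_i$ cross term and the two $S_i$ cross terms come from differentiating $a^\epsilon$ and $c^\epsilon$. The terminal term likewise gives
\[
(Y_T^\ell)^\top G_iY_T^h+(G_iX_T+g_i)^\top Z_T^{h,\ell}.
\]

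\textbf{Main obstacle.} The delicate point is justifying the passage of the limit inside the expectation, since $K,K_h',K_\ell''$ are only in $L^2(0,T)$, not $L^\infty$. Products such as $K_sY_s^h$ paired with $K_sX_s$ have integrability that is not automatic from $\mathcal S^2$ bounds alone. I would handle this by using $\mathcal S^p$ estimates for $p=4$ (valid for linear SDEs with $L^2$-in-time coefficients, via BDG and Gronwall with the integrable kernel $|\bar A_s|+|\bar C_s|^2$), together with the deterministic bound
\[
\int_t^T|K_s|^2\,ds<\infty .
\]
Then every integrand is bounded by $\sup_s|\cdot|\sup_s|\cdot|$ times an $L^1(0,T)$ function, and Hölder's inequality plus the Step 1 convergences (upgraded to $\mathcal S^4$) yield convergence of the difference quotients in $L^1(\Omega\times[t,T])$, completing the proof.
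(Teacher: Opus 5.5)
Your proposal is correct and follows the same route as the paper. The paper gives no separate argument: it says only that the result follows ``similarly'' to the first-order representation via standard moment estimates for linear SDEs. Your Step~1 (difference quotients of $X$ and $Y^h$ converging to $Y^\ell$ and $Z^{h,\ell}$) and the product-rule computation of Step~2 fill this in and reproduce \eqref{eq:second-prob-explicit} term by term.

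Two minor remarks. First, with coefficients $\bar A^\epsilon,\bar C^\epsilon$ the source term in the equation for $(Y^{h,\epsilon}-Y^h)/\epsilon$ should be $B_\ell K_\ell''Y^h$; alternatively, keep $Y^{h,\epsilon}$ and use $\bar A,\bar C$. Second, the $\mathcal S^4$ upgrade is unnecessary. All integrands are bilinear, e.g.\ $\mathbb E\int_t^T|K_s|^2|X_s||Y_s^h|\,ds\le\|K\|_{L^2(0,T)}^2\|X\|_{\mathcal S^2}\|Y^h\|_{\mathcal S^2}$, so $\mathcal S^2$ convergence already suffices.
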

	
\subsection{PDE representations in the LQ setting}\label{sec:pde-representations}

    For the PDE representations, define the running cost
    \begin{equation*}
       f_i(t,x,a):=\frac12\Big(x^\top Q_i(t)x+a^\top R_i(t)a+2x^\top S_i(t)a+2q_i(t)^\top x+2\rho_i(t)^\top a\Big).
    \end{equation*}
    For \(y\in\mathbb R^n\), \(z\in\mathbb S^n\), and \(a\in\mathbb R^k\), define
    \begin{equation*}
    \begin{aligned}
        \mathcal L(t,x,y,z,a):={}&(A(t)x+B(t)a+b(t))^\top y\\
        &+\frac12\operatorname{Tr}\!\left((C(t)x+D(t)a+\sigma(t))(C(t)x+D(t)a+\sigma(t))^\top z\right),
    \end{aligned}
    \end{equation*}
and the Hamiltonian
    \begin{equation*}
        H_i(t,x,y,z,a):=\mathcal L(t,x,y,z,a)+f_i(t,x,a).
    \end{equation*}
    For a fixed feedback profile \(\phi=(K,m)\), let
    \begin{equation*}
        \mathcal L^\phi u(t,x):=\mathcal L\bigl(t,x,\partial_xu(t,x),\partial_{xx}^2u(t,x),\phi(t,x)\bigr).
    \end{equation*}
    Here \(\partial_{a_h}\) and \(\partial_{a_ha_\ell}\) denote the derivatives with respect to the corresponding action blocks. The time argument is omitted
from the coefficients when no confusion occurs.

    Set
    \[
    \bar Q_i:=Q_i+K^\top R_iK+S_iK+K^\top S_i^\top,
    \]
    \[
    \bar q_i:=q_i+S_im+K^\top R_im+K^\top\rho_i,\qquad\bar r_i:=m^\top R_im+2\rho_i^\top m.
    \]
    Define
    \begin{equation*}
    \begin{aligned}
        v_i^\phi(t,x):={}&\frac12x^\top P_i(t)x+p_i(t)^\top x+r_i(t),
    \end{aligned}
    \end{equation*}
    where \((P_i,p_i,r_i)\) is the solution of
    \begin{equation}\label{eq:P-ode}
    \left\{
    \begin{aligned}
        &\dot P_i+\bar A^\top P_i+P_i\bar A+\bar C^\top P_i\bar C+\bar Q_i=0,\\
        &P_i(T)=G_i,
    \end{aligned}
    \right.
    \end{equation}
    \begin{equation}\label{eq:p-ode}
    \left\{
    \begin{aligned}
        &\dot p_i+\bar A^\top p_i+P_i\bar b+\bar C^\top P_i\bar\sigma+\bar q_i=0,\\
        &p_i(T)=g_i,
    \end{aligned}
    \right.
    \end{equation}
    \begin{equation}\label{eq:r-ode}
    \left\{
    \begin{aligned}
        &\dot r_i+\bar b^\top p_i+\frac12\bar\sigma^\top P_i\bar\sigma+\frac12\bar r_i=0,\\
        &r_i(T)=0.
    \end{aligned}
    \right.
    \end{equation}

    A direct substitution shows that \(v_i^\phi\) satisfies, for almost every \(t\in[0,T]\) and every \(x\in\mathbb R^n\),
    \begin{equation}
    \left\{
    \begin{aligned}
        &\partial_t V(t,x)+\mathcal L^\phi V(t,x)+f_i\bigl(t,x,\phi(t,x)\bigr)=0,\\
        &V(T,x)=\frac12x^\top G_ix+g_i^\top x.
    \end{aligned}
    \right.
    \end{equation}

\subsubsection{First-order linear derivative}

    Fix \(h\in\mathcal I_N\) and a direction
    \[
    \phi_h'(t,x)=K_h'(t)x+m_h'(t).
    \]
    Set
    \begin{equation}\label{eq:Mih-definition}
        M_i^h:=B_h^\top P_i+D_h^\top P_i\bar C+E_h^\top(R_iK+S_i^\top)\in\mathbb R^{k_h\times n},
    \end{equation}
    \begin{equation}\label{eq:ellih-definition}
        \ell_i^h:=B_h^\top p_i+D_h^\top P_i\bar\sigma+E_h^\top(R_im+\rho_i)\in\mathbb R^{k_h}.
    \end{equation}
    \[
    \Theta_i^h(t,x)=(M_i^h(t)x+\ell_i^h(t))^\top(K_h'(t)x+m_h'(t)).
    \]
    Equivalently,
    \[
    \Theta_i^h(t,x)=\frac12x^\top\Theta_{2,i}^h(t)x+(\Theta_{1,i}^h(t))^\top x+\Theta_{0,i}^h(t),
    \]
where
    \begin{equation}\label{eq:Theta2ih}
        \Theta_{2,i}^h=(M_i^h)^\top K_h'+(K_h')^\top M_i^h,
    \end{equation}
    \begin{equation}\label{eq:Theta1ih}
\Theta_{1,i}^h
=(M_i^h)^\top m_h'+(K_h')^\top\ell_i^h,
\end{equation}
    \begin{equation*}
        \Theta_{0,i}^h=(\ell_i^h)^\top m_h'.
    \end{equation*}
    Define
    \begin{equation}\label{eq:w-quadratic-representation}
        w_i^h(t,x):=\frac12x^\top P_i^h(t)x+(p_i^h(t))^\top x+r_i^h(t),
    \end{equation}
where \((P_i^h,p_i^h,r_i^h)\) is the unique solution of
    \begin{equation}\label{eq:Ph-ode}
    \left\{
    \begin{aligned}
        &\dot P_i^h+\bar A^\top P_i^h+P_i^h\bar A+\bar C^\top P_i^h\bar C+\Theta_{2,i}^h=0,\\
        &P_i^h(T)=0,
    \end{aligned}
    \right.
    \end{equation}
    \begin{equation}\label{eq:ph-ode}
    \left\{
    \begin{aligned}
        &\dot p_i^h+\bar A^\top p_i^h+P_i^h\bar b+\bar C^\top P_i^h\bar\sigma+\Theta_{1,i}^h=0,\\
        &p_i^h(T)=0,
    \end{aligned}
    \right.
    \end{equation}
    \begin{equation}\label{eq:rh-ode}
    \left\{
    \begin{aligned}
        &\dot r_i^h+\bar b^\top p_i^h+\frac12\bar\sigma^\top P_i^h\bar\sigma+\Theta_{0,i}^h=0,\\
        &r_i^h(T)=0.
    \end{aligned}
    \right.
    \end{equation}

    A direct substitution shows that \(w_i^h\) satisfies, for almost every \(t\in[0,T]\) and every \(x\in\mathbb R^n\),
    \begin{equation}
    \left\{
    \begin{aligned}
        &\partial_t U(t,x)+\mathcal L^\phi U(t,x)\\
        &\quad+\partial_{a_h}H_i\!\left(t,x,\partial_x v_i^\phi(t,x),\partial_{xx}^2 v_i^\phi(t,x),\phi(t,x)\right)^\top\phi_h'(t,x)=0,\\
        &U(T,x)=0.
    \end{aligned}
    \right.
    \end{equation}

\subsubsection{Second-order linear derivative}

    Fix \(\ell\in\mathcal I_N\) and another direction
    \[
    \phi_\ell''(t,x)=K_\ell''(t)x+m_\ell''(t).
    \]
    Let \(w_i^\ell\) denote the function in \eqref{eq:w-quadratic-representation} corresponding to \(\phi_\ell''\), with coefficients \((P_i^\ell,p_i^\ell,r_i^\ell)\). Define
    \[
    N_i^{h,\ell}:=B_\ell^\top P_i^h+D_\ell^\top P_i^h\bar C\in\mathbb R^{k_\ell\times n},\qquad
    \nu_i^{h,\ell}:=B_\ell^\top p_i^h+D_\ell^\top P_i^h\bar\sigma\in\mathbb R^{k_\ell},
    \]
    \[
    N_i^{\ell,h}:=B_h^\top P_i^\ell+D_h^\top P_i^\ell\bar C\in\mathbb R^{k_h\times n},\qquad
    \nu_i^{\ell,h}:=B_h^\top p_i^\ell+D_h^\top P_i^\ell\bar\sigma\in\mathbb R^{k_h},
    \]
    \[
    H_i^{h,\ell}:=D_h^\top P_iD_\ell\in\mathbb R^{k_h\times k_\ell},\qquad
    J_i^{\ell,h}:=E_\ell^\top R_iE_h\in\mathbb R^{k_\ell\times k_h}.
    \]
    \begin{equation}\label{eq:source-second-expanded-quadratic}
    \begin{aligned}
        \mathscr R_i^{h,\ell}(t,x)={}&\big(N_i^{h,\ell}x+\nu_i^{h,\ell}\big)^\top\big(K_\ell''x+m_\ell''\big)
        +\big(N_i^{\ell,h}x+\nu_i^{\ell,h}\big)^\top\big(K_h'x+m_h'\big)\\
        &+\big(K_h'x+m_h'\big)^\top H_i^{h,\ell}\big(K_\ell''x+m_\ell''\big)+\big(K_\ell''x+m_\ell''\big)^\top J_i^{\ell,h}\big(K_h'x+m_h'\big).
    \end{aligned}
    \end{equation}
    Write
    \[
    \mathscr R_i^{h,\ell}(t,x)=\frac12x^\top\mathscr R_{2,i}^{h,\ell}(t)x+(\mathscr R_{1,i}^{h,\ell}(t))^\top x+\mathscr R_{0,i}^{h,\ell}(t),
    \]
where
    \begin{equation*}
    \begin{aligned}
        \mathscr R_{2,i}^{h,\ell}={}&(N_i^{h,\ell})^\top K_\ell''+(K_\ell'')^\top N_i^{h,\ell}+(N_i^{\ell,h})^\top K_h'+(K_h')^\top N_i^{\ell,h}\\
        &+(K_h')^\top H_i^{h,\ell}K_\ell''+(K_\ell'')^\top(H_i^{h,\ell})^\top K_h'\\
        &+(K_\ell'')^\top J_i^{\ell,h}K_h'+(K_h')^\top(J_i^{\ell,h})^\top K_\ell'',\\
        \mathscr R_{1,i}^{h,\ell}={}&(N_i^{h,\ell})^\top m_\ell''+(K_\ell'')^\top\nu_i^{h,\ell}+(N_i^{\ell,h})^\top m_h'+(K_h')^\top\nu_i^{\ell,h}\\
        &+(K_h')^\top H_i^{h,\ell}m_\ell''+(K_\ell'')^\top(H_i^{h,\ell})^\top m_h'\\
        &+(K_\ell'')^\top J_i^{\ell,h}m_h'+(K_h')^\top(J_i^{\ell,h})^\top m_\ell'',\\
        \mathscr R_{0,i}^{h,\ell}={}&(\nu_i^{h,\ell})^\top m_\ell''+(\nu_i^{\ell,h})^\top m_h'+(m_h')^\top H_i^{h,\ell}m_\ell''+(m_\ell'')^\top J_i^{\ell,h}m_h'.
    \end{aligned}
    \end{equation*}
    Define
    \begin{equation}\label{eq:z-quadratic-representation}
        z_i^{h,\ell}(t,x):=\frac12x^\top P_i^{h,\ell}(t)x+(p_i^{h,\ell}(t))^\top x+r_i^{h,\ell}(t),
    \end{equation}
where \((P_i^{h,\ell},p_i^{h,\ell},r_i^{h,\ell})\) is the unique solution of
    \begin{equation}\label{eq:Phl-ode}
    \left\{
    \begin{aligned}
        &\dot P_i^{h,\ell}+\bar A^\top P_i^{h,\ell}+P_i^{h,\ell}\bar A+\bar C^\top P_i^{h,\ell}\bar C+\mathscr R_{2,i}^{h,\ell}=0,\\
        &P_i^{h,\ell}(T)=0,
    \end{aligned}
    \right.
    \end{equation}
    \begin{equation}\label{eq:phl-ode}
    \left\{
    \begin{aligned}
        &\dot p_i^{h,\ell}+\bar A^\top p_i^{h,\ell}+P_i^{h,\ell}\bar b+\bar C^\top P_i^{h,\ell}\bar\sigma+\mathscr R_{1,i}^{h,\ell}=0,\\
        &p_i^{h,\ell}(T)=0,
    \end{aligned}
    \right.
    \end{equation}
    \begin{equation}\label{eq:rhl-ode}
    \left\{
    \begin{aligned}
        &\dot r_i^{h,\ell}+\bar b^\top p_i^{h,\ell}+\frac12\bar\sigma^\top P_i^{h,\ell}\bar\sigma+\mathscr R_{0,i}^{h,\ell}=0,\\
        &r_i^{h,\ell}(T)=0.
    \end{aligned}
    \right.
    \end{equation}
A direct substitution shows that \(z_i^{h,\ell}\) satisfies, for almost every \(t\in[0,T]\) and every \(x\in\mathbb R^n\),
    \begin{equation}
    \left\{
    \begin{aligned}
        &\partial_t W(t,x)+\mathcal L^\phi W(t,x)\\
        &\quad+\partial_{a_\ell}\mathcal L\!\left(t,x,\partial_x w_i^h(t,x),\partial_{xx}^2 w_i^h(t,x),\phi(t,x)\right)^\top\phi_\ell''(t,x)\\
        &\quad+\partial_{a_h}\mathcal L\!\left(t,x,\partial_x w_i^\ell(t,x),\partial_{xx}^2 w_i^\ell(t,x),\phi(t,x)\right)^\top\phi_h'(t,x)\\
        &\quad+\phi_h'(t,x)^\top\partial_{a_ha_\ell}H_i\!\left(t,x,\partial_x v_i^\phi(t,x),\partial_{xx}^2 v_i^\phi(t,x),\phi(t,x)\right)\phi_\ell''(t,x)=0,\\
        &W(T,x)=0.
    \end{aligned}
    \right.
    \end{equation}
Under Assumption~\ref{ass:LQ-data}, $\bar A,\bar C,\bar b,\bar\sigma\in L^2(0,T),\bar Q_i,\bar q_i,\bar r_i\in L^1(0,T)$. After solving \eqref{eq:P-ode}--\eqref{eq:r-ode}, one has
\(M_i^h,\ell_i^h\in L^2(0,T)\), and therefore \(\Theta_{2,i}^h,\Theta_{1,i}^h,\Theta_{0,i}^h\in L^1(0,T)\). Similarly, \(N_i^{h,\ell},N_i^{\ell,h},\nu_i^{h,\ell}, \nu_i^{\ell,h}\in L^2(0,T)\) and \(H_i^{h,\ell},J_i^{\ell,h}\in L^\infty(0,T)\), which imply \(\mathscr R_{2,i}^{h,\ell},\mathscr R_{1,i}^{h,\ell}, \mathscr R_{0,i}^{h,\ell}\in L^1(0,T)\). Hence all the {\it ordinary differential equation} (ODE) systems above are well posed.

\subsection{Equivalence between probabilistic and PDE representations}
	
	The next theorem proves the equivalence between the probabilistic representation in Section~\ref{sec:probabilistic-representations} and the PDE representation in Section~\ref{sec:pde-representations} for the LQ model.
	
	\begin{mythm}\label{thm:probabilistic-pde-equivalence}
		In the LQ setting of Section~\ref{sec:problem-formulation}, for all
		\[
		\phi=(K,m)\in\pi^{(N)},\qquad\phi_h'(t,x)=K_h'(t)x+m_h'(t),\qquad\phi_\ell''(t,x)=K_\ell''(t)x+m_\ell''(t),
		\]
		and all \((t,x)\in[0,T]\times\mathbb R^n\), the probabilistic and PDE representations of first and second order linear derivatives agree:
		\begin{equation}\label{eq:first-equivalence-clean}
			\frac{\delta V_i^{t,x}}{\delta\phi_h}(\phi;\phi_h')=w_i^h(t,x),
		\end{equation}
		\begin{equation}\label{eq:second-equivalence-clean}
			\frac{\delta^2 V_i^{t,x}}{\delta\phi_h\delta\phi_\ell}(\phi;\phi_h',\phi_\ell'')=z_i^{h,\ell}(t,x),
		\end{equation}
		where \(w_i^h\) and \(z_i^{h,\ell}\) are given by \eqref{eq:w-quadratic-representation} and \eqref{eq:z-quadratic-representation}, respectively.
	\end{mythm}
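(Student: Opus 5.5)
We prove both identities by a verification (Itô-type) argument. We apply Itô's formula to suitable products of the quadratic functions $v_i^\phi$, $w_i^h$, $z_i^{h,\ell}$ (or their gradients) with the variational processes, and match the resulting terms with Propositions~\ref{First-order probabilistic representation} and \ref{Second-order probabilistic representation}. Since all the functions are quadratic in $x$ with coefficients solving linear ODEs with $L^1$ data, the ODE coefficients are absolutely continuous. Itô's formula therefore applies to $s\mapsto w(s,X_s)$ with $X\in\mathcal S^2$. The stochastic integrals are true martingales after a standard localization, justified by the moment bounds $X,Y^h,Y^\ell,Z^{h,\ell}\in\mathcal S^2$ (indeed in every $\mathcal S^p$, as the linear SDEs have coefficients $\bar A,\bar C\in L^2$ and sources that are polynomial in $X$).

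For \eqref{eq:first-equivalence-clean}, the cleanest route is to introduce the adjoint pair $(\partial_x v_i^\phi(s,X_s),\partial_{xx}^2 v_i^\phi(s,X_s))=(P_iX_s+p_i,P_i)$. We apply Itô's product rule to $(P_i(s)X_s+p_i(s))^\top Y_s^h$ on $[t,T]$ and use \eqref{eq:P-ode}--\eqref{eq:p-ode}.

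1. The terms in $Y^h$ multiplied by $\bar Q_iX+\bar q_i$ reproduce exactly the $Y^h$-terms of \eqref{eq:first-prob-explicit}, namely $[Q_iX+S_i(KX+m)+q_i]^\top Y^h+[R_i(KX+m)+S_i^\top X+\rho_i]^\top KY^h$.
2. The cross-variation term $(\bar C X+\bar\sigma)^\top P_i D_h(K_h'X+m_h')$ and the drift term $(P_iX+p_i)^\top B_h(K_h'X+m_h')$ combine with the direct cost term $[R_i(KX+m)+S_i^\top X+\rho_i]^\top E_h(K_h'X+m_h')$ into $(M_i^hX+\ell_i^h)^\top(K_h'X+m_h')=\Theta_i^h(s,X_s)$, by \eqref{eq:Mih-definition}--\eqref{eq:ellih-definition}.

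This yields $\frac{\delta V_i^{t,x}}{\delta\phi_h}(\phi;\phi_h')=\mathbb E\int_t^T\Theta_i^h(s,X_s)\,ds$. On the other hand, Itô applied to $w_i^h(s,X_s)$ together with the PDE for $w_i^h$ (equivalently \eqref{eq:Ph-ode}--\eqref{eq:rh-ode}) gives $w_i^h(t,x)=\mathbb E\int_t^T\Theta_i^h(s,X_s)\,ds$, since $w_i^h(T,\cdot)=0$. This proves \eqref{eq:first-equivalence-clean}.

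For \eqref{eq:second-equivalence-clean} we follow the same pattern in two layers. First, $z_i^{h,\ell}(t,x)=\mathbb E\int_t^T\mathscr R_i^{h,\ell}(s,X_s)\,ds$ by Itô on $z_i^{h,\ell}(s,X_s)$. Second, we reduce \eqref{eq:second-prob-explicit} to the same expression, in three steps.

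1. \emph{The $Z^{h,\ell}$ terms.} We treat the $Z^{h,\ell}$-terms with the adjoint $(P_iX+p_i)$ exactly as in the first-order step, now with source $B_hK_h'Y^\ell+B_\ell K_\ell''Y^h$ and diffusion source $D_hK_h'Y^\ell+D_\ell K_\ell''Y^h$. This replaces them by $(M_i^hY^\ell)^\top K_h'$-type terms, i.e.\ $\mathbb E\int[(M_i^hY^\ell)^\top(K_h'X)+(M_i^\ell Y^h)^\top(K_\ell''X)]$-like expressions, plus the $E_hK_h'Y^\ell$ cost terms, which are absorbed into $M_i^h$.
2. \emph{The quadratic terms in $(Y^h,Y^\ell)$.} These consist of $Q_i,R_i,S_i,G_i$ evaluated on $Y^h,Y^\ell$, together with the Itô cross-variation $Y^{\ell\top}\bar C^\top P_iD_h(\cdots)$-type terms. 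They are handled by applying Itô to $(Y^\ell)^\top P_iY^h$.
3. \emph{The remaining linear pieces.} Finally, the pieces that are linear in $Y^h$ (respectively $Y^\ell$) against the $K_\ell''X+m_\ell''$ (respectively $K_h'X+m_h'$) sources are converted, via Itô on $(P_i^hX+p_i^h)^\top Y^\ell$ and \eqref{eq:Ph-ode}--\eqref{eq:ph-ode}, into the terms $(N_i^{h,\ell}X+\nu_i^{h,\ell})^\top(K_\ell''X+m_\ell'')$ of $\mathscr R_i^{h,\ell}$. This relies on $\partial_x w_i^h=P_i^hX+p_i^h$ and the first-order identity applied in direction $h$.

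After these steps the leftover terms must be exactly $(K_h'X+m_h')^\top D_h^\top P_iD_\ell(K_\ell''X+m_\ell'')$, which comes from the cross-variation of the two diffusion sources in the product $(Y^\ell)^\top P_iY^h$, and $(K_\ell''X+m_\ell'')^\top E_\ell^\top R_iE_h(K_h'X+m_h')$, which comes from the cost term. These are $H_i^{h,\ell}$ and $J_i^{\ell,h}$.

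The main obstacle is this last bookkeeping: showing that all the $Y$-dependent terms generated by the products with $P_i$ and $P_i^h,P_i^\ell$ cancel completely, leaving only functions of $X$. In particular, one has to check that the $M_i$-type contributions from the $Z$-adjoint step match the $\Theta_{2,i}^h$ sources driving $P_i^h$. To keep this manageable, the alternative I would try is a symmetric organization: prove the second-order identity by differentiating the first-order identity $\frac{\delta V_i}{\delta\phi_h}(\phi;\phi_h')=\mathbb E\int_t^T\Theta_i^h(s,X_s^\phi)\,ds$ in direction $\phi_\ell''$. The derivative falls on three places. Differentiating $X$ inside $\Theta_i^h$ gives $\mathbb E\int\partial_x\Theta_i^h\cdot Y^\ell$, which is turned into the $N_i^{\ell,h}$-type terms and the $(P_i^hX+p_i^h)$-terms via a backward representation of $w_i^h$. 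Differentiating $(M_i^h,\ell_i^h)$ through $(P_i,p_i,K,m)$ is handled via linear ODE sensitivities, and these produce the $N_i^{h,\ell},H_i^{h,\ell},J_i^{\ell,h}$ terms. I expect this route to make the identification with $\mathscr R_i^{h,\ell}$ transparent.
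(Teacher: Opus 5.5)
Your main route is the paper's proof: the first-order identity comes from It\^o on $(P_iX+p_i)^\top Y^h$ and on $w_i^h(s,X_s)$, and the second-order identity from It\^o on $(Y^\ell)^\top P_iY^h+(P_iX+p_i)^\top Z^{h,\ell}$, then on $(P_i^hX+p_i^h)^\top Y^\ell$ and its mirror $(P_i^\ell X+p_i^\ell)^\top Y^h$, and finally on $z_i^{h,\ell}(s,X_s)$, with the $H_i^{h,\ell}$ and $J_i^{\ell,h}$ terms as the only leftovers. The bookkeeping you flag as the main obstacle closes by the identity $(\Theta_{2,i}^hX+\Theta_{1,i}^h)^\top Y^\ell=(M_i^hY^\ell)^\top(K_h'X+m_h')+(M_i^hX+\ell_i^h)^\top K_h'Y^\ell$, which follows directly from \eqref{eq:Theta2ih}--\eqref{eq:Theta1ih}: the first term is what the $(Y^\ell)^\top P_iY^h$ step leaves (together with the $Y$-bilinear cost terms), and the second term is what the $Z^{h,\ell}$ step leaves. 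You wrote that the $Z^{h,\ell}$ step gives $(M_i^hY^\ell)^\top K_h'X$, but it actually gives $(M_i^hX+\ell_i^h)^\top K_h'Y^\ell$; it is these $Y^\ell$-linear pieces, not the $Y^h$-linear ones, that It\^o on $(P_i^hX+p_i^h)^\top Y^\ell$ turns into $(N_i^{h,\ell}X+\nu_i^{h,\ell})^\top(K_\ell''X+m_\ell'')$, so the detour through differentiating the first-order formula is unnecessary.
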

	
	\begin{proof}
		Fix \(i,h,\ell\in\mathcal I_N\), \(\phi=(K,m)\in\pi^{(N)}\), \(\phi_h'(s,x)=K_h'(s)x+m_h'(s)\), \(\phi_\ell''(s,x)=K_\ell''(s)x+m_\ell''(s)\), and \((t,x)\in[0,T]\times\mathbb R^n\).
		
		Applying It\^o's formula to \(w_i^h(s,X_s)\) in \eqref{eq:w-quadratic-representation}, and using \eqref{eq:Ph-ode}--\eqref{eq:rh-ode} together with \(w_i^h(T,\cdot)=0\),
		\begin{equation*}
			w_i^h(t,x)=\mathbb E\int_t^T\Theta_i^h(s,X_s)\,ds,
		\end{equation*}
		where
		\begin{equation*}
	    \begin{aligned}
			\Theta_i^h(s,X_s)={}&\Big[B_h^\top(P_iX+p_i)+D_h^\top P_i(\bar CX+\bar\sigma)+E_h^\top(R_i(KX+m)+S_i^\top X+\rho_i)\Big]^\top\phi_h'(s,X).
		\end{aligned}
		\end{equation*}
		Since
		\[
		\begin{aligned}
			d(P_iX+p_i)={}&\Big\{\dot P_iX+P_i(\bar AX+\bar b)+\dot p_i\Big\}\,ds+P_i(\bar CX+\bar\sigma)\,dW\\
            ={}&\Big\{-\bar A^\top(P_iX+p_i)-\bar C^\top P_i(\bar CX+\bar\sigma)-(\bar Q_iX+\bar q_i)\Big\}\,ds+P_i(\bar CX+\bar\sigma)\,dW,
		\end{aligned}
		\]
		where the second equality follows from \eqref{eq:P-ode}--\eqref{eq:p-ode}. Thus It\^o's formula gives
		\begin{equation}\label{eq:proof-first-product-full}
			\begin{aligned}
				&\mathbb E\big[(G_iX_T+g_i)^\top Y_T^h\big]\\
				={}&\mathbb E\int_t^T\Big\{\big[-\bar A^\top(P_iX+p_i)-\bar C^\top P_i(\bar CX+\bar\sigma)-(\bar Q_iX+\bar q_i)\big]^\top Y^h\\
				&\qquad +(P_iX+p_i)^\top(\bar AY^h+B_h\phi_h'(s,X))+(\bar CX+\bar\sigma)^\top P_i(\bar CY^h+D_h\phi_h'(s,X))\Big\}\,ds\\
                =&\mathbb E\int_t^T\Big\{-(\bar Q_iX+\bar q_i)^\top Y^h+\big[B_h^\top(P_iX+p_i)+D_h^\top P_i(\bar CX+\bar\sigma)\big]^\top\phi_h'(s,X)\Big\}\,ds.
			\end{aligned}
		\end{equation}
		Substituting \eqref{eq:proof-first-product-full} into \eqref{eq:first-prob-explicit} gives
		\begin{align*}
			\frac{\delta V_i^{t,x}}{\delta\phi_h}(\phi;\phi_h')
			&={}\mathbb E\int_t^T\Big\{(\bar Q_iX+\bar q_i)^\top Y^h+(R_i(KX+m)+S_i^\top X+\rho_i)^\top E_h\phi_h'(s,X)\Big\}\,ds\\
			&\quad +\mathbb E\big[(G_iX_T+g_i)^\top Y_T^h\big]\\
            &=\mathbb E\int_t^T\Big[B_h^\top(P_iX+p_i)+D_h^\top P_i(\bar CX+\bar\sigma)+E_h^\top(R_i(KX+m)\\
            &\qquad\qquad +S_i^\top X+\rho_i)\Big]^\top\phi_h'(s,X)\,ds.\\
			&=w_i^h(t,x).
		\end{align*}
		This proves \eqref{eq:first-equivalence-clean}. We apply It\^o's formula to
		\[
		s\longmapsto (Y_s^\ell)^\top P_i(s)Y_s^h+(P_i(s)X_s+p_i(s))^\top Z_s.
		\]
		The first product gives
		\begin{equation}\label{eq:proof-Y-P-Y-full}
			\begin{aligned}
				&d\big[(Y^\ell_s)^\top P_i(s)Y_s^h\big]  \\
				={}&(dY^\ell_s)^\top P_iY_s^h+(Y^\ell_s)^\top \dot P_iY_s^h\,ds+(Y^\ell_s)^\top P_i\,dY_s^h+(dY^\ell_s)^\top P_i\,dY_s^h  \\
				={}&\Big\{(Y^\ell_s)^\top(\dot P_i+\bar A^\top P_i+P_i\bar A+\bar C^\top P_i\bar C)Y_s^h  \\
				&\quad +\big[B_\ell^\top P_iY_s^h+D_\ell^\top P_i\bar CY_s^h\big]^\top\phi_\ell''(s,X_s)
                +\big[B_h^\top P_iY^\ell_s+D_h^\top P_i\bar CY^\ell_s\big]^\top\phi_h'(s,X_s) \\
				&\quad +\phi_\ell''(s,X_s)^\top D_\ell^\top P_iD_h\phi_h'(s,X_s)\Big\}\,ds+dM_s^{(1)}  \\
				={}&\Big\{-(Y^\ell_s)^\top\bar Q_iY_s^h+\big[B_\ell^\top P_iY_s^h+D_\ell^\top P_i\bar CY_s^h\big]^\top\phi_\ell''(s,X_s) \\
				&\quad+\big[B_h^\top P_iY^\ell_s+D_h^\top P_i\bar CY^\ell_s\big]^\top\phi_h'(s,X_s)+\phi_\ell''(s,X_s)^\top D_\ell^\top P_iD_h\phi_h'(s,X_s)\Big\}\,ds
                +dM_s^{(1)}.
			\end{aligned}
		\end{equation}
		where \(M^{(1)}\) is a martingale and the last equality uses \eqref{eq:P-ode}. For the second product,
		\begin{equation}\label{eq:proof-PX-Z-full}
			\begin{aligned}
				&d\big[(P_iX+p_i)^\top Z\big]=\Big\{\big[-\bar A^\top(P_iX+p_i)-\bar C^\top P_i(\bar CX+\bar\sigma)-(\bar Q_iX+\bar q_i)\big]^\top Z\\
				&\quad +(P_iX+p_i)^\top\big(\bar AZ+B_hK_h'Y^\ell+B_\ell K_\ell''Y^h\big)\\
				&\quad +(\bar CX+\bar\sigma)^\top P_i\big(\bar CZ+D_hK_h'Y^\ell+D_\ell K_\ell''Y^h\big)\Big\}\,ds+dM_s^{(2)}\\
				={}&\Big\{-(\bar Q_iX+\bar q_i)^\top Z+\big[B_h^\top(P_iX+p_i)+D_h^\top P_i(\bar CX+\bar\sigma)\big]^\top K_h'Y^\ell\\
				&\quad+\big[B_\ell^\top(P_iX+p_i)+D_\ell^\top P_i(\bar CX+\bar\sigma)\big]^\top K_\ell''Y^h\Big\}\,ds+dM_s^{(2)}.
			\end{aligned}
		\end{equation}
		where \(M^{(2)}\) is a martingale. Since \(P_i(T)=G_i\), \(p_i(T)=g_i\), and \(Y_t^h=Y_t^\ell=Z_t=0\), taking expectations in \eqref{eq:proof-Y-P-Y-full} and \eqref{eq:proof-PX-Z-full} yields
		\begin{equation}\label{eq:proof-terminal-second-expanded}
			\begin{aligned}
				&\mathbb E\big[(Y_T^\ell)^\top G_iY_T^h+(G_iX_T+g_i)^\top Z_T\big]\\
				={}&\mathbb E\int_t^T\Big\{-(Y^\ell)^\top\bar Q_iY^h-(\bar Q_iX+\bar q_i)^\top Z+\big[B_\ell^\top P_iY^h+D_\ell^\top P_i\bar CY^h\big]^\top\phi_\ell''(s,X)\\
				&\quad+\big[B_h^\top P_iY^\ell+D_h^\top P_i\bar CY^\ell\big]^\top\phi_h'(s,X)
				+\phi_\ell''(s,X)^\top D_\ell^\top P_iD_h\phi_h'(s,X)\\
				&\quad+\big[B_h^\top(P_iX+p_i)+D_h^\top P_i(\bar CX+\bar\sigma)\big]^\top K_h'Y^\ell\\
				&\quad+\big[B_\ell^\top(P_iX+p_i)+D_\ell^\top P_i(\bar CX+\bar\sigma)\big]^\top K_\ell''Y^h\Big\}\,ds.
			\end{aligned}
		\end{equation}
		Since
		\begin{equation}\label{eq:proof-Hessian-running-expanded}
			\begin{aligned}
				& (Y^\ell)^\top Q_iY^h+\big[KY^\ell+E_\ell\phi_\ell''(s,X)\big]^\top R_i\big[KY^h+E_h\phi_h'(s,X)\big]\\
				&\quad+(Y^\ell)^\top S_i\big[KY^h+E_h\phi_h'(s,X)\big]+(Y^h)^\top S_i\big[KY^\ell+E_\ell\phi_\ell''(s,X)\big]\\
				={}&(Y^\ell)^\top\bar Q_iY^h+\big[E_\ell^\top(R_iK+S_i^\top)Y^h\big]^\top\phi_\ell''(s,X)+\big[E_h^\top(R_iK+S_i^\top)Y^\ell\big]^\top\phi_h'(s,X)\\
				&\quad+\phi_\ell''(s,X)^\top E_\ell^\top R_iE_h\phi_h'(s,X).
			\end{aligned}
		\end{equation}
		Substituting \eqref{eq:proof-terminal-second-expanded} and \eqref{eq:proof-Hessian-running-expanded} into the second-order probabilistic representation \eqref{eq:second-prob-explicit}, we obtain
		\begin{equation*}
			\begin{aligned}
				&\frac{\delta^2 V_i^{t,x}}{\delta\phi_h\delta\phi_\ell}(\phi;\phi_h',\phi_\ell'')=\mathbb E\bigg[\int_t^T\Big\{\big[Q_iX+S_i(KX+m)+q_i\big]^\top Z\\
				&\quad+\big[R_i(KX+m)+S_i^\top X+\rho_i\big]^\top\big(KZ+E_hK_h'Y^\ell+E_\ell K_\ell''Y^h\big)\\
				&\quad+(Y^\ell)^\top Q_iY^h+\big(KY^\ell+E_\ell\phi_\ell''(s,X)\big)^\top R_i\big(KY^h+E_h\phi_h'(s,X)\big)\\
				&\quad+(Y^\ell)^\top S_i\big(KY^h+E_h\phi_h'(s,X)\big)+(Y^h)^\top S_i\big(KY^\ell+E_\ell\phi_\ell''(s,X)\big)\Big\}\,ds\\
				&\quad+(G_iX_T+g_i)^\top Z_T+(Y_T^\ell)^\top G_iY_T^h\bigg]\\
				={}&\mathbb E\int_t^T\Big\{\big[B_\ell^\top P_iY^h+D_\ell^\top P_i\bar CY^h+E_\ell^\top(R_iK+S_i^\top)Y^h\big]^\top\phi_\ell''(s,X)\\
				&\quad+\big[B_h^\top P_iY^\ell+D_h^\top P_i\bar CY^\ell+E_h^\top(R_iK+S_i^\top)Y^\ell\big]^\top\phi_h'(s,X)\\
				&\quad+\big[B_h^\top(P_iX+p_i)+D_h^\top P_i(\bar CX+\bar\sigma)+E_h^\top(R_i(KX+m)+S_i^\top X+\rho_i)\big]^\top K_h'Y^\ell\\
				&\quad+\big[B_\ell^\top(P_iX+p_i)+D_\ell^\top P_i(\bar CX+\bar\sigma)+E_\ell^\top(R_i(KX+m)+S_i^\top X+\rho_i)\big]^\top K_\ell''Y^h\\
				&\quad+\phi_\ell''(s,X)^\top\big(D_\ell^\top P_iD_h+E_\ell^\top R_iE_h\big)\phi_h'(s,X)\Big\}\,ds.
			\end{aligned}
		\end{equation*}
		By the definitions \eqref{eq:Mih-definition}--\eqref{eq:ellih-definition}, the first four lines of the integrand above are
		\begin{align*}
			&(M_i^\ell Y^h)^\top\phi_\ell''(s,X)+(M_i^hY^\ell)^\top\phi_h'(s,X)+(M_i^hX+\ell_i^h)^\top K_h'Y^\ell+(M_i^\ell X+\ell_i^\ell)^\top K_\ell''Y^h.
		\end{align*}
		Hence
		\begin{equation}\label{eq:proof-second-M-form-expanded}
			\begin{aligned}
				&\frac{\delta^2 V_i^{t,x}}{\delta\phi_h\delta\phi_\ell}(\phi;\phi_h',\phi_\ell'')
				=\mathbb E\int_t^T\Big\{(M_i^\ell Y^h)^\top\phi_\ell''(s,X)+(M_i^hY^\ell)^\top\phi_h'(s,X)\\
				&\qquad+(M_i^hX+\ell_i^h)^\top K_h'Y^\ell+(M_i^\ell X+\ell_i^\ell)^\top K_\ell''Y^h\\
				&\qquad+\phi_h'(s,X)^\top D_h^\top P_iD_\ell\phi_\ell''(s,X)+\phi_\ell''(s,X)^\top E_\ell^\top R_iE_h\phi_h'(s,X)\Big\}\,ds.
			\end{aligned}
		\end{equation}
	    From \eqref{eq:Ph-ode}--\eqref{eq:ph-ode},
		\begin{equation*}
			\begin{aligned}
				&d(P_i^hX+p_i^h)=\Big\{\dot P_i^hX+P_i^h(\bar AX+\bar b)+\dot p_i^h\Big\}\,ds+P_i^h(\bar CX+\bar\sigma)\,dW\\
				&=\Big\{-\bar A^\top(P_i^hX+p_i^h)-\bar C^\top P_i^h(\bar CX+\bar\sigma)-(\Theta_{2,i}^hX+\Theta_{1,i}^h)\Big\}\,ds+P_i^h(\bar CX+\bar\sigma)\,dW.
			\end{aligned}
		\end{equation*}
		Applying It\^o's formula to \((P_i^hX+p_i^h)^\top Y^\ell\), we get
		\begin{equation}\label{eq:proof-cross-h-full}
			\begin{aligned}
				&d\big[(P_i^hX+p_i^h)^\top Y^\ell\big]\\
		        ={}&\Big\{\big[-\bar A^\top(P_i^hX+p_i^h)-\bar C^\top P_i^h(\bar CX+\bar\sigma)-(\Theta_{2,i}^hX+\Theta_{1,i}^h)\big]^\top Y^\ell\\
				&\quad+(P_i^hX+p_i^h)^\top(\bar AY^\ell+B_\ell\phi_\ell''(s,X))+(\bar CX+\bar\sigma)^\top P_i^h(\bar CY^\ell+D_\ell\phi_\ell''(s,X))\Big\}\,ds+dM_s^{(3)}\\
				={}&\Big\{-(\Theta_{2,i}^hX+\Theta_{1,i}^h)^\top Y^\ell+\big[B_\ell^\top(P_i^hX+p_i^h)+D_\ell^\top P_i^h(\bar CX+\bar\sigma)\big]^\top
                \phi_\ell''(s,X)\Big\}\,ds+dM_s^{(3)}.
			\end{aligned}
		\end{equation}
		Since \(P_i^h(T)=p_i^h(T)=0\) and \(Y_t^\ell=0\), taking expectations in \eqref{eq:proof-cross-h-full} yields
		\begin{equation}\label{eq:proof-cross-h-result-expanded}
			\mathbb E\int_t^T\big[B_\ell^\top(P_i^hX+p_i^h)+D_\ell^\top P_i^h(\bar CX+\bar\sigma)\big]^\top\phi_\ell''(s,X)\,ds
			=\mathbb E\int_t^T(\Theta_{2,i}^hX+\Theta_{1,i}^h)^\top Y^\ell\,ds.
		\end{equation}
		Similarly
		\begin{equation*}
			\mathbb E\int_t^T\big[B_h^\top(P_i^\ell X+p_i^\ell)+D_h^\top P_i^\ell(\bar CX+\bar\sigma)\big]^\top\phi_h'(s,X)\,ds
			=\mathbb E\int_t^T(\Theta_{2,i}^\ell X+\Theta_{1,i}^\ell)^\top Y^h\,ds.
		\end{equation*}
		The definitions \eqref{eq:Theta2ih}--\eqref{eq:Theta1ih} imply
		\begin{equation*}
			(\Theta_{2,i}^hX+\Theta_{1,i}^h)^\top Y^\ell
			=(M_i^hY^\ell)^\top\phi_h'(s,X)+(M_i^hX+\ell_i^h)^\top K_h'Y^\ell,
		\end{equation*}
		\begin{equation}\label{eq:proof-source-gradient-ell-expanded}
			(\Theta_{2,i}^\ell X+\Theta_{1,i}^\ell)^\top Y^h
			=(M_i^\ell Y^h)^\top\phi_\ell''(s,X)+(M_i^\ell X+\ell_i^\ell)^\top K_\ell''Y^h.
		\end{equation}
		Combining \eqref{eq:proof-second-M-form-expanded} with \eqref{eq:proof-cross-h-result-expanded}--\eqref{eq:proof-source-gradient-ell-expanded},
		we obtain
		\begin{equation}\label{eq:proof-second-source-expanded-final}
			\begin{aligned}
				&\frac{\delta^2 V_i^{t,x}}{\delta\phi_h\delta\phi_\ell}(\phi;\phi_h',\phi_\ell'')
				=\mathbb E\int_t^T\Big\{\big[B_\ell^\top(P_i^hX+p_i^h)+D_\ell^\top P_i^h(\bar CX+\bar\sigma)\big]^\top\phi_\ell''(s,X)\\
				&\quad+\big[B_h^\top(P_i^\ell X+p_i^\ell)+D_h^\top P_i^\ell(\bar CX+\bar\sigma)\big]^\top\phi_h'(s,X)\\
				&\quad+\phi_h'(s,X)^\top D_h^\top P_iD_\ell\phi_\ell''(s,X)+\phi_\ell''(s,X)^\top E_\ell^\top R_iE_h\phi_h'(s,X)\Big\}\,ds.
			\end{aligned}
		\end{equation}
		The integrand in \eqref{eq:proof-second-source-expanded-final} is exactly \(\mathscr R_i^{h,\ell}(s,X_s)\) by \eqref{eq:source-second-expanded-quadratic}. Therefore
		\begin{equation}\label{eq:proof-second-prob-source-expanded}
			\frac{\delta^2 V_i^{t,x}}{\delta\phi_h\delta\phi_\ell}(\phi;\phi_h',\phi_\ell'')=\mathbb E\int_t^T\mathscr R_i^{h,\ell}(s,X_s)\,ds.
		\end{equation}
		Finally, applying It\^o's formula to \(z_i^{h,\ell}(s,X_s)\) in \eqref{eq:z-quadratic-representation}, and using \eqref{eq:Phl-ode}--\eqref{eq:rhl-ode} together with \(z_i^{h,\ell}(T,\cdot)=0\), gives
		\[
		z_i^{h,\ell}(t,x)=\mathbb E\int_t^T\mathscr R_i^{h,\ell}(s,X_s)\,ds.
		\]
		Together with \eqref{eq:proof-second-prob-source-expanded}, this proves \eqref{eq:second-equivalence-clean}. The proof is complete.
	\end{proof}
	
	\section{Open-loop stochastic LQ $\alpha$-potential game}
	
	This section develops the open-loop LQ $\alpha$-potential framework. We construct the $\alpha$-potential function, estimate the parameter $\alpha$, and use the resulting finite-dimensional minimization problem to obtain an approximate open-loop Nash equilibrium.
	
	Throughout this section, let $(\Omega,\mathcal F,\mathbb F,\mathbb P)$ be a complete filtered probability space satisfying the usual conditions and supporting a \(d_W\)-dimensional Brownian motion $W\equiv(W^1,\ldots,W^{d_W})^\top$. For \(x\in\mathbb R^n\) and \(u\in\mathbb R^k\), write $x\equiv((x^1)^\top,\ldots,(x^N)^\top)^\top, x^i\in\mathbb R^{n_i}$, and $u\equiv(u_1^\top,\ldots,u_N^\top)^\top, u_i\in\mathbb R^{k_i}$. Here \(n_i\) and \(k_i\) denote the state and control dimensions of player \(i\), respectively. For each $i\in\mathcal I_N$, let $A_i\subset\mathbb R^{k_i}$ be the open-loop action set of player $i$, $\mathbb A:=\prod_{i=1}^N A_i\subset\mathbb R^k .$ Define
$\mathcal U_i:=\left\{u_i:\Omega\times[t,T]\to A_i\ \middle|\ u_i\ \text{is }\mathbb F\text{-progressively measurable and }u_i\in \mathcal H^2([t,T];\mathbb R^{k_i})\right\}$.
Set $\mathcal U^{(N)}:=\prod_{i=1}^N\mathcal U_i$.
	
	For $u\in\mathcal U^{(N)}$, the controlled state equation is
	\begin{equation}\label{eq:open-loop-state}
		\left\{
		\begin{aligned}
			dX_s^u={}&(A_sX_s^u+B_su_s+b_s)\,ds+\sum_{h=1}^{d_W}\bigl(C_s^hX_s^u+D_s^hu_s+\sigma_s^h\bigr)\,dW_s^h,\qquad s\in[t,T],\\
			X_t^u={}&x.
		\end{aligned}
		\right.
	\end{equation}
	The cost function of player $i$ is given by 
	\begin{equation*}
		\begin{aligned}
			J_i^{t,x}(u):=\frac12\mathbb E\bigg[&\int_t^T\Big((X_s^u)^\top Q_i(s)X_s^u+u_s^\top R_i(s)u_s+2(X_s^u)^\top S_i(s)u_s \\
			&\qquad+2q_i(s)^\top X_s^u+2\rho_i(s)^\top u_s\Big)ds+(X_T^u)^\top G_iX_T^u+2g_i^\top X_T^u\bigg].
		\end{aligned}
	\end{equation*}
	Assume that \(A,B,b\) and the cost coefficients satisfy the corresponding conditions in Assumption~\ref{ass:LQ-data}. For each \(h=1,\ldots,d_W\), assume that
	\[
	C^h\in L^\infty(0,T;\mathbb R^{n\times n}),\qquad D^h\in L^\infty(0,T;\mathbb R^{n\times k}),\qquad \sigma^h\in L^2(0,T;\mathbb R^n).
	\]
	We also assume that each action set $A_i$ is convex and $0\in A_i$ for all $i\in\mathcal I_N$.
	
	Let
	\[
	B_i(s):=B_sE_i,\qquad D_i^h(s):=D_s^hE_i,\qquad h=1,\ldots,d_W,
	\]
	so that
	\[
	B_su_s=\sum_{i=1}^N B_i(s)u_{i,s},\qquad D_s^hu_s=\sum_{i=1}^N D_i^h(s)u_{i,s}.
	\]

	For a direction \(u_i'\in \mathcal H^2([t,T];\mathbb R^{k_i})\), let $Y^{i,u_i'}$ be the solution of the first-order variational equation
	\begin{equation*}
		\left\{
		\begin{aligned}
			dY_s^{i,u_i'}={}&(A_sY_s^{i,u_i'}+B_i(s)u_{i,s}')\,ds+\sum_{h=1}^{d_W}\bigl(C_s^hY_s^{i,u_i'}+D_i^h(s)u_{i,s}'\bigr)\,dW_s^h,\\
			Y_t^{i,u_i'}={}&0.
		\end{aligned}
		\right.
	\end{equation*}
	We write $\bar Y^u:=\sum_{i=1}^NY^{i,u_i}$.
	\begin{mylem}\label{lem:Xru-decomposition}
		For every $u\in\mathcal U^{(N)}$ and every $r\in[0,1]$,
		\[
		X_s^{ru}=X_s^u-(1-r)\bar Y_s^u,\qquad s\in[t,T].
		\]
	\end{mylem}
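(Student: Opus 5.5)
Both sides of the identity solve the same linear SDE with the same initial value, so the plan is to verify this and conclude by strong uniqueness. Here \(ru\) denotes the control profile \((ru_{1},\ldots,ru_N)\), which is admissible because each \(A_i\) is convex and contains \(0\).

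First I would compute the dynamics of \(\bar Y^u\). Summing the variational equations for \(Y^{i,u_i}\) over \(i\in\mathcal I_N\) and using \(\sum_i B_i u_{i}=Bu\) and \(\sum_i D_i^h u_{i}=D^hu\) gives
\[
d\bar Y^u_s=(A_s\bar Y^u_s+B_su_s)\,ds+\sum_{h=1}^{d_W}(C^h_s\bar Y^u_s+D^h_su_s)\,dW^h_s,\qquad \bar Y^u_t=0.
\]
Next, define \(\widetilde X_s:=X^u_s-(1-r)\bar Y^u_s\). Subtracting \((1-r)\) times the equation for \(\bar Y^u\) from the state equation \eqref{eq:open-loop-state} for \(X^u\) yields
\[
d\widetilde X_s=\big(A_s\widetilde X_s+B_s(ru_s)+b_s\big)\,ds+\sum_{h=1}^{d_W}\big(C^h_s\widetilde X_s+D^h_s(ru_s)+\sigma^h_s\big)\,dW^h_s,\qquad \widetilde X_t=x.
\]
The control terms collapse because \(u-(1-r)u=ru\). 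This is exactly \eqref{eq:open-loop-state} with control \(ru\).

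The coefficients \(A,C^h\) are bounded, and \(b,\sigma^h\in L^2\), \(ru\in\mathcal H^2\). Hence this linear SDE has a unique strong solution in \(\mathcal S^2([t,T];\mathbb R^n)\). Moreover, \(\widetilde X\in\mathcal S^2\) as a combination of \(\mathcal S^2\) processes. Uniqueness then gives \(\widetilde X=X^{ru}\) up to indistinguishability, which is the claim for all \(s\in[t,T]\).

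There is no real obstacle: the argument is pure linearity. The only points needing care are that \(ru\) is admissible, which holds because \(0\in A_i\) and each \(A_i\) is convex, and that the uniqueness class contains \(\widetilde X\).
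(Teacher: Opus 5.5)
Your proof is correct and is the same as the paper's: set $\widetilde X=X^u-(1-r)\bar Y^u$, check by linearity that it solves the state equation driven by $ru$ with initial value $x$, and conclude by strong uniqueness. Your extra remarks on the admissibility of $ru$ and on the uniqueness class are fine. Strictly speaking, the identity itself only needs $ru\in\mathcal H^2$; membership of $ru$ in the action sets is not required.
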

	
	\begin{proof}
		Set $Z_s^r:=X_s^u-(1-r)\bar Y_s^u$. By the state equation and the variational equations,
		\[
		dZ_s^r=\bigl(A_sZ_s^r+rB_su_s+b_s\bigr)\,ds+\sum_{h=1}^{d_W}\bigl(C_s^hZ_s^r+rD_s^hu_s+\sigma_s^h\bigr)\,dW_s^h,\qquad Z_t^r=x.
		\]
		Thus $Z^r$ satisfies \eqref{eq:open-loop-state} with control $ru$. The conclusion follows from uniqueness of the state equation.
	\end{proof}

    Similarly to Proposition~\ref{First-order probabilistic representation} and Proposition~\ref{Second-order probabilistic representation}, we have
	\begin{equation}\label{eq:open-loop-first-derivative}
		\begin{aligned}
			\frac{\delta J_i^{t,x}}{\delta u_i}(u;u_i')={}&\mathbb E\bigg[\int_t^T\Big\{\big(Q_iX_s^u+S_i u_s+q_i\big)^\top Y_s^{i,u_i'}\\
			&\qquad\qquad+\big(R_i u_s+S_i^\top X_s^u+\rho_i\big)^\top E_i u_{i,s}'\Big\}\,ds+(G_iX_T^u+g_i)^\top Y_T^{i,u_i'}\bigg].
		\end{aligned}
	\end{equation}
    \begin{equation*}
	    \begin{aligned}
	        \frac{\delta^2 J_i^{t,x}}{\delta u_i\delta u_j}(u;u_i',u_j'')={}
            &\mathbb E\bigg[\int_t^T\Big\{\big(Y_s^{j,u_j''}\big)^\top Q_iY_s^{i,u_i'}+\big(Y_s^{j,u_j''}\big)^\top S_iE_i u_{i,s}'\\
		    &\qquad\qquad+\big(Y_s^{i,u_i'}\big)^\top S_iE_j u_{j,s}''+\big(E_j u_{j,s}''\big)^\top R_iE_i u_{i,s}'\Big\}\,ds
            +\big(Y_T^{j,u_j''}\big)^\top G_iY_T^{i,u_i'}\bigg].
	    \end{aligned}
    \end{equation*}
	Define the open-loop $\alpha$-potential function
	\begin{equation*}
		\Phi^{t,x}(u):=\int_0^1\sum_{i=1}^N\frac{\delta J_i^{t,x}}{\delta u_i}(ru;u_i)\,dr .
	\end{equation*}
	Using \eqref{eq:open-loop-first-derivative} and Lemma~\ref{lem:Xru-decomposition}, we have:
	\begin{equation*}
		\begin{aligned}
			\Phi^{t,x}(u)={}&\int_0^1\sum_{i=1}^N\mathbb E\bigg[\int_t^T\Big\{\big[Q_i(X_s^u-(1-r)\bar Y_s^u)+S_i(ru_s)+q_i\big]^\top Y_s^{i,u_i}\\
			&\qquad\qquad+\big[R_i(ru_s)+S_i^\top(X_s^u-(1-r)\bar Y_s^u)+\rho_i\big]^\top E_iu_{i,s}\Big\}\,ds\\
			&\qquad+\big[G_i(X_T^u-(1-r)\bar Y_T^u)+g_i\big]^\top Y_T^{i,u_i}\bigg]dr.
		\end{aligned}
	\end{equation*}
	Applying Fubini's theorem, we get
	\begin{equation}\label{eq:open-loop-potential-explicit}
		\begin{aligned}
			\Phi^{t,x}(u)={}&\mathbb E\Bigg[\int_t^T\sum_{i=1}^N\Big\{\big[Q_i(X_s^u-\tfrac12\bar Y_s^u)+\tfrac12S_iu_s+q_i\big]^\top Y_s^{i,u_i}\\
			&\qquad\qquad+\big[\tfrac12R_iu_s+S_i^\top(X_s^u-\tfrac12\bar Y_s^u)+\rho_i\big]^\top E_iu_{i,s}\Big\}\,ds\\
			&\qquad+\sum_{i=1}^N\big[G_i(X_T^u-\tfrac12\bar Y_T^u)+g_i\big]^\top Y_T^{i,u_i}\Bigg].
		\end{aligned}
	\end{equation}
	
	For
	\[
	y\equiv(y_1,\ldots,y_N)\in(\mathbb R^n)^N,\qquad\bar y:=\sum_{i=1}^Ny_i,
	\]
define
	\begin{equation}\label{eq:F-open-loop-explicit}
		\begin{aligned}
			F(t,x,y,a):=\sum_{i=1}^N\Bigg\{&y_i^\top Q_i(t)\left(x-\frac12\bar y\right)+\frac12 y_i^\top S_i(t)a+q_i(t)^\top y_i\\
			&+(E_i a_i)^\top\left[\frac12 R_i(t)a+S_i(t)^\top\left(x-\frac12\bar y\right)+\rho_i(t)\right]\Bigg\},
		\end{aligned}
	\end{equation}
and
	\begin{equation}\label{eq:G-open-loop-explicit}
		G(x,y):=\sum_{i=1}^N\left[y_i^\top G_i\left(x-\frac12\bar y\right)+g_i^\top y_i\right].
	\end{equation}
	
	\begin{mypro}
		For every $u\in\mathcal U^{(N)}$,
		\begin{equation*}
			\Phi^{t,x}(u)=\mathbb E\left[\int_t^T F(s,X_s^u,Y_s^{1,u_1},\ldots,Y_s^{N,u_N},u_s)\,ds+G(X_T^u,Y_T^{1,u_1},\ldots,Y_T^{N,u_N})\right].
		\end{equation*}
	\end{mypro}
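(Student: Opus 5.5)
The plan is to start from the explicit expression \eqref{eq:open-loop-potential-explicit} for $\Phi^{t,x}(u)$ and match it term by term with $F$ and $G$ in \eqref{eq:F-open-loop-explicit}--\eqref{eq:G-open-loop-explicit}, evaluated at $x=X_s^u$, $y_i=Y_s^{i,u_i}$, $\bar y=\bar Y_s^u$ and $a=u_s$ (so $a_i=u_{i,s}$). The real content is already in the derivation of \eqref{eq:open-loop-potential-explicit}, so I would first check that derivation and then do the identification.

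\textbf{Step 1: the derivation of \eqref{eq:open-loop-potential-explicit}.} Two facts are needed.

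First, by linearity of the variational equation and uniqueness of its solution, the variational process in direction $u_i$ does not depend on the base point $ru$. So in $\frac{\delta J_i^{t,x}}{\delta u_i}(ru;u_i)$ one may use $Y^{i,u_i}$, together with $X^{ru}=X^u-(1-r)\bar Y^u$ from Lemma~\ref{lem:Xru-decomposition}.

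Second, the integrand is affine in $r$, and $\int_0^1 r\,dr=\int_0^1(1-r)\,dr=\tfrac12$.

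Swapping the $dr$-integral with $\mathbb E\int_t^T$ is justified by Fubini's theorem. The integrability condition is a bound, uniform in $r$, of the form
\[
\mathbb E\Big[\int_t^T\big(|X^u|+|\bar Y^u|+|u|+|q_i|+|\rho_i|\big)\big(|Y^{i,u_i}|+|u_i|\big)\,ds\Big]<\infty .
\]
This follows from $X^u,\,Y^{i,u_i}\in\mathcal S^2$ (standard moment estimates for linear SDEs with $u\in\mathcal H^2$), from boundedness of $Q_i,R_i,S_i$, from $q_i,\rho_i\in L^2$, and from Cauchy--Schwarz. The terminal terms are handled the same way.

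\textbf{Step 2: the running cost.} Summing the $r$-averaged integrand over $i$ gives, for each $i$, the term
\[
\big[Q_i(X-\tfrac12\bar Y)+\tfrac12S_iu+q_i\big]^\top Y^{i}=y_i^\top Q_i(x-\tfrac12\bar y)+\tfrac12 y_i^\top S_ia+q_i^\top y_i .
\]
The $Q_i$ part uses the symmetry of $Q_i$. The remaining bracket $\big[\tfrac12R_iu+S_i^\top(X-\tfrac12\bar Y)+\rho_i\big]^\top E_iu_i$ is exactly $(E_ia_i)^\top[\cdots]$ after transposing a scalar.

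\textbf{Step 3: the terminal cost.} Similarly, $[G_i(X_T-\tfrac12\bar Y_T)+g_i]^\top Y_T^i$ equals $y_i^\top G_i(x-\tfrac12\bar y)+g_i^\top y_i$, again using $G_i\in\mathbb S^n$.

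The only step I expect to need genuine care is the Fubini/integrability justification together with the independence of $Y^{i,u_i}$ from the base point. The remaining identification is bookkeeping of transposes and symmetric matrices.
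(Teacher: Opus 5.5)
Your proposal is correct and follows the paper's route. The paper's proof is exactly the term-by-term identification of \eqref{eq:open-loop-potential-explicit} with $F$ and $G$, and your Step~1 re-verifies the derivation the paper carries out just before the proposition (base-point independence of $Y^{i,u_i}$, Lemma~\ref{lem:Xru-decomposition}, $\int_0^1 r\,dr=\int_0^1(1-r)\,dr=\tfrac12$, Fubini). One small aside: you do not need the symmetry of $Q_i$ or $G_i$, since $[Q_iv]^\top y_i=y_i^\top Q_i v$ already holds by transposing a scalar.
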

	
	\begin{proof}
		The result follows directly from \eqref{eq:open-loop-potential-explicit} and the definitions \eqref{eq:F-open-loop-explicit}--\eqref{eq:G-open-loop-explicit}.
	\end{proof}

\subsection{Estimate of $\alpha$ for the open-loop LQ game}

\begin{mylem}
	\label{lem:blockwise-first-variation}
	Write $Y^{i,v_i}\equiv((Y^{i,v_i,1})^\top,\ldots,(Y^{i,v_i,N})^\top)^\top.$ Decompose the matrix according to the state blocks,
	\[
	A_s=\bigl(A^{r\ell}(s)\bigr)_{r,\ell=1}^N,\qquad A^{r\ell}(s)\in\mathbb R^{n_r\times n_\ell},
	\]
	and, for each \(h=1,\ldots,d_W\), write
	\[
	C_s^h=\bigl(C^{h,r\ell}(s)\bigr)_{r,\ell=1}^N,\qquad C^{h,r\ell}(s)\in\mathbb R^{n_r\times n_\ell}.
	\]
	Similarly, write
	\[
	B_i(s)\equiv
		\begin{pmatrix}
		B_i^1(s)\\
		\vdots\\
		B_i^N(s)
		\end{pmatrix},
	\qquad D_i^h(s)\equiv
		\begin{pmatrix}
		D_i^{h,1}(s)\\
		\vdots\\
		D_i^{h,N}(s)
		\end{pmatrix},
	\]
where
	\[
	B_i^r(s),D_i^{h,r}(s)\in\mathbb R^{n_r\times k_i}.
	\]
	Let \(Y^{i,v_i}\) be the first variation generated by \(v_i\in\mathcal H^2([t,T];\mathbb R^{k_i})\). Then its \(r\)-th block satisfies
	\[
	\begin{aligned}
	    dY_s^{i,v_i,r}={}&\left(\sum_{\ell=1}^N A^{r\ell}(s)Y_s^{i,v_i,\ell}+B_i^r(s)v_{i,s}\right)ds\\
	    &+\sum_{h=1}^{d_W}\left(\sum_{\ell=1}^N C^{h,r\ell}(s)Y_s^{i,v_i,\ell}+D_i^{h,r}(s)v_{i,s}\right)dW_s^h,\qquad Y_t^{i,v_i,r}=0.
	\end{aligned}
	\]
	For \(r,\ell=1,\ldots,N\), set
	\[
	\begin{aligned}
	    \gamma_{r\ell}:={}&\mathbf 1_{\{r=\ell\}}\left(1+2\|A^{rr}\|_{L^\infty(0,T)}+\sum_{m\ne r}\|A^{rm}\|_{L^\infty(0,T)}\right)	\\
	    &+\mathbf 1_{\{r\ne \ell\}}\|A^{r\ell}\|_{L^\infty(0,T)}+2\sum_{h=1}^{d_W}\left(\sum_{m=1}^N\|C^{h,rm}\|_{L^\infty(0,T)}\right)\|C^{h,r\ell}\|_{L^\infty(0,T)}.
	\end{aligned}
	\]
	Define
	\[
	\gamma:=\max_{1\le \ell\le N}\sum_{r=1}^N\gamma_{r\ell}.
	\]
	For \(r=1,\ldots,N\) and \(i\in\mathcal I_N\), set
	\[
	\begin{aligned}
	    \chi_{ri}:=e^{\gamma_{rr}T}\Bigg[&\|B_i^r\|_{L^\infty(0,T)}^2+2\sum_{h=1}^{d_W}\|D_i^{h,r}\|_{L^\infty(0,T)}^2\\
	    &+T\Bigg(\sum_{\ell\ne r}\gamma_{r\ell}\Bigg)e^{\gamma T}\sum_{m=1}^N\left(\|B_i^m\|_{L^\infty(0,T)}^2+2\sum_{h=1}^{d_W}\|D_i^{h,m}\|_{L^\infty(0,T)}^2\right)\Bigg].
	\end{aligned}
	\]
	Then, for every \(s\in[t,T]\),
	\[
	\mathbb E|Y_s^{i,v_i,r}|^2\le\chi_{ri}\|v_i\|_{\mathcal H^2([t,T];\mathbb R^{k_i})}^2 .
	\]
	Consequently,
	\[
	\|Y^{i,v_i,r}\|_{\mathcal H^2([t,T];\mathbb R^{n_r})}^2\le T\chi_{ri}\|v_i\|_{\mathcal H^2([t,T];\mathbb R^{k_i})}^2,
	\]
	and
	\[
	\mathbb E|Y_T^{i,v_i,r}|^2\le\chi_{ri}\|v_i\|_{\mathcal H^2([t,T];\mathbb R^{k_i})}^2 .
	\]
\end{mylem}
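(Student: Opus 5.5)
Apply It\^o's formula to $|Y_s^{i,v_i,r}|^2$ block by block, obtain a coupled system of integral inequalities for $\phi_r(s):=\mathbb E|Y_s^{i,v_i,r}|^2$, and close it with Gronwall's inequality twice: first for the aggregate $\sum_r\phi_r$, then for each block separately. The blockwise equation displayed in the statement is just the $r$-th row of the variational equation and needs no argument.

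\emph{Step 1: the blockwise differential inequality.} Writing $Y^\ell:=Y^{i,v_i,\ell}$ and $v:=v_i$, It\^o's formula (with the stochastic integral a true martingale, since $Y\in\mathcal S^2$ and the coefficients are bounded) gives
\[
\frac{d}{ds}\phi_r=\mathbb E\Big[2(Y^r)^\top\Big(\sum_\ell A^{r\ell}Y^\ell+B_i^rv\Big)+\sum_{h}\Big|\sum_\ell C^{h,r\ell}Y^\ell+D_i^{h,rr}v\Big|^2\Big].
\]
I would bound the terms as follows.
\begin{itemize}
\item The diagonal drift term is bounded by $2\|A^{rr}\|_\infty\phi_r$.
\item Each off-diagonal drift term is handled by $2|Y^r||A^{rm}Y^m|\le\|A^{rm}\|_\infty(|Y^r|^2+|Y^m|^2)$. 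This produces the coefficient $\sum_{m\neq r}\|A^{rm}\|_\infty$ on $\phi_r$ and $\|A^{r\ell}\|_\infty$ on $\phi_\ell$.
\item The control drift term is bounded by $2|Y^r||B_i^rv|\le|Y^r|^2+\|B_i^r\|_\infty^2|v|^2$, which accounts for the ``$1+$'' in $\gamma_{rr}$.
\item For the diffusion term, first use $|a+b|^2\le2|a|^2+2|b|^2$. Then apply Cauchy--Schwarz in the form $|\sum_\ell c_\ell y_\ell|^2\le(\sum_m c_m)\sum_\ell c_\ell|y_\ell|^2$ with $c_\ell=\|C^{h,r\ell}\|_\infty$. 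This gives exactly $2\sum_h(\sum_m\|C^{h,rm}\|)\|C^{h,r\ell}\|\phi_\ell$ and the term $2\sum_h\|D_i^{h,r}\|^2|v|^2$.
\end{itemize}
Altogether,
\[
\phi_r'(s)\le\sum_\ell\gamma_{r\ell}\phi_\ell(s)+\beta_r\,\mathbb E|v_s|^2,\qquad \beta_r:=\|B_i^r\|^2+2\sum_h\|D_i^{h,r}\|^2 .
\]

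\emph{Step 2: aggregate bound.} Summing over $r$ and using $\sum_r\gamma_{r\ell}\le\gamma$ gives, for $\Psi:=\sum_r\phi_r$,
\[
\Psi'\le\gamma\Psi+\Big(\sum_m\beta_m\Big)\mathbb E|v|^2,\qquad \Psi(t)=0 .
\]
Gronwall then yields $\Psi(s)\le e^{\gamma T}\big(\sum_m\beta_m\big)\|v\|_{\mathcal H^2}^2$ for all $s$.

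\emph{Step 3: individual block.} In the inequality for $\phi_r$, bound each $\phi_\ell$ with $\ell\neq r$ by $\Psi$, using the Step 2 estimate:
\[
\phi_r'\le\gamma_{rr}\phi_r+\beta_r\mathbb E|v|^2+\Big(\sum_{\ell\ne r}\gamma_{r\ell}\Big)e^{\gamma T}\Big(\sum_m\beta_m\Big)\|v\|^2 .
\]
Applying Gronwall once more gives $\phi_r(s)\le e^{\gamma_{rr}T}\big[\beta_r\|v\|^2+T(\cdots)\|v\|^2\big]=\chi_{ri}\|v\|^2$. The factor $T$ comes from integrating the constant term over $[t,s]$, which has length at most $T$.

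The two consequences then follow immediately. The $\mathcal H^2$ bound is obtained by integrating the pointwise bound over $[t,T]$, which contributes a factor at most $T$. The terminal bound is the case $s=T$.

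The main obstacle is Step 1: keeping the constants exactly matched to the stated $\gamma_{r\ell}$, in particular the Cauchy--Schwarz weighting in the diffusion term, and checking the constant in front of $D_i^{h,r}$ (it is $2$, coming from the $|a+b|^2$ split). A minor technical point is justifying that $\phi_r$ is absolutely continuous, so that the integral form of Gronwall applies. I would handle this by working directly with integral inequalities rather than derivatives.
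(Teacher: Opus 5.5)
Your proposal is correct and follows the paper's proof essentially step by step. It uses the same It\^o computation with the same Young and Cauchy--Schwarz splittings, which produce $\gamma_{r\ell}$. It then applies Gronwall to the aggregate $\sum_r\mathbb E|Y_s^{i,v_i,r}|^2$ via the column-sum bound $\gamma$, and applies Gronwall a second time to each block after dominating the off-diagonal terms by the aggregate. (The $D_i^{h,rr}$ in your It\^o display is a typo for $D_i^{h,r}$.)
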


\begin{proof}
	By Itô's formula,
	\[
	\begin{aligned}
	    \frac{d}{ds}\mathbb E|Y_s^{i,v_i,r}|^2={}&2\mathbb E\left\langle Y_s^{i,v_i,r},\sum_{\ell=1}^N A^{r\ell}(s)Y_s^{i,v_i,\ell}+B_i^r(s)v_{i,s}\right\rangle\\
	    &+\sum_{h=1}^{d_W}\mathbb E\left|\sum_{\ell=1}^N C^{h,r\ell}(s)Y_s^{i,v_i,\ell}+D_i^{h,r}(s)v_{i,s}\right|^2 .
	\end{aligned}
	\]
	The first term is bounded by
	\[
	\begin{aligned}
	    &2\left\langle Y_s^{i,v_i,r},\sum_{\ell=1}^N A^{r\ell}(s)Y_s^{i,v_i,\ell}+B_i^r(s)v_{i,s}\right\rangle\\
	    &\quad\le\Bigg(1+2\|A^{rr}\|_{L^\infty(0,T)}+\sum_{\ell\ne r}\|A^{r\ell}\|_{L^\infty(0,T)}\Bigg)|Y_s^{i,v_i,r}|^2\\
	    &\qquad+\sum_{\ell\ne r}\|A^{r\ell}\|_{L^\infty(0,T)}|Y_s^{i,v_i,\ell}|^2+\|B_i^r\|_{L^\infty(0,T)}^2|v_{i,s}|^2 .
	\end{aligned}
	\]
	For every \(h=1,\ldots,d_W\),
	\[
	\left|\sum_{\ell=1}^N C^{h,r\ell}(s)y^\ell\right|^2\le\left(\sum_{\ell=1}^N\|C^{h,r\ell}\|_{L^\infty(0,T)}\right)\sum_{\ell=1}^N\|C^{h,r\ell}\|_{L^\infty(0,T)}|y^\ell|^2.
    \]
	Hence
	\[
	\begin{aligned}
	    &\sum_{h=1}^{d_W}\left|\sum_{\ell=1}^N C^{h,r\ell}(s)Y_s^{i,v_i,\ell}+D_i^{h,r}(s)v_{i,s}\right|^2\\
	    &\quad\le 2\sum_{h=1}^{d_W}\left(\sum_{m=1}^N\|C^{h,rm}\|_{L^\infty(0,T)}\right)\sum_{\ell=1}^N\|C^{h,r\ell}\|_{L^\infty(0,T)}|Y_s^{i,v_i,\ell}|^2
	    +2\sum_{h=1}^{d_W}\|D_i^{h,r}\|_{L^\infty(0,T)}^2|v_{i,s}|^2.
	\end{aligned}
	\]
	Therefore,
	\[
	\begin{aligned}
	    \frac{d}{ds}\mathbb E|Y_s^{i,v_i,r}|^2\le{}\sum_{\ell=1}^N\gamma_{r\ell}\mathbb E|Y_s^{i,v_i,\ell}|^2
        +\left(\|B_i^r\|_{L^\infty(0,T)}^2+2\sum_{h=1}^{d_W}\|D_i^{h,r}\|_{L^\infty(0,T)}^2\right)\mathbb E|v_{i,s}|^2.
	\end{aligned}
	\]
	Summing over \(r=1,\ldots,N\), we obtain
	\[
	\begin{aligned}
	    \frac{d}{ds}\sum_{r=1}^N\mathbb E|Y_s^{i,v_i,r}|^2\le{}\gamma\sum_{r=1}^N\mathbb E|Y_s^{i,v_i,r}|^2
        +\left[\sum_{r=1}^N\left(\|B_i^r\|_{L^\infty(0,T)}^2+2\sum_{h=1}^{d_W}\|D_i^{h,r}\|_{L^\infty(0,T)}^2\right)\right]\mathbb E|v_{i,s}|^2 .
	\end{aligned}
	\]
	Since \(Y_t^{i,v_i}=0\), Gronwall's inequality yields
	\begin{equation}\label{eq:total sum}
	\begin{aligned}
	    \sum_{r=1}^N\mathbb E|Y_s^{i,v_i,r}|^2\le{}&e^{\gamma T}\left[\sum_{r=1}^N\left(\|B_i^r\|_{L^\infty(0,T)}^2
        +2\sum_{h=1}^{d_W}\|D_i^{h,r}\|_{L^\infty(0,T)}^2\right)\right]\|v_i\|_{\mathcal H^2([t,T];\mathbb R^{k_i})}^2 .
	\end{aligned}
	\end{equation}
	For a fixed \(r\), the preceding inequality also implies
	\[
	\begin{aligned}
	    \frac{d}{ds}\mathbb E|Y_s^{i,v_i,r}|^2\le{}&\gamma_{rr}\mathbb E|Y_s^{i,v_i,r}|^2+\Bigg(\sum_{\ell\ne r}\gamma_{r\ell}\Bigg)\sum_{m=1}^N\mathbb E|Y_s^{i,v_i,m}|^2\\
	    &+\left(\|B_i^r\|_{L^\infty(0,T)}^2+2\sum_{h=1}^{d_W}\|D_i^{h,r}\|_{L^\infty(0,T)}^2\right)\mathbb E|v_{i,s}|^2.
	\end{aligned}
	\]
	Applying Gronwall's inequality again and using \eqref{eq:total sum}, we obtain
	\[
	\begin{aligned}
	    \mathbb E|Y_s^{i,v_i,r}|^2\le{}&e^{\gamma_{rr}T}\Bigg[\|B_i^r\|_{L^\infty(0,T)}^2+2\sum_{h=1}^{d_W}\|D_i^{h,r}\|_{L^\infty(0,T)}^2\\
	    &\quad+T\Bigg(\sum_{\ell\ne r}\gamma_{r\ell}\Bigg)e^{\gamma T}\sum_{m=1}^N\left(\|B_i^m\|_{L^\infty(0,T)}^2
        +2\sum_{h=1}^{d_W}\|D_i^{h,m}\|_{L^\infty(0,T)}^2\right)\Bigg]\|v_i\|_{\mathcal H^2([t,T];\mathbb R^{k_i})}^2\\
	    ={}&\chi_{ri}\|v_i\|_{\mathcal H^2([t,T];\mathbb R^{k_i})}^2 .
	\end{aligned}
	\]
	Integrating over \(s\in[t,T]\) gives the \(\mathcal H^2\)-estimate, and taking \(s=T\) gives the terminal estimate.
\end{proof}

\begin{mypro}[Estimate of \(\alpha\)]\label{pro:open-loop-LQ-alpha}
	In the open-loop LQ setting above, assume that the admissible control class is \(\mathcal H^2\)-bounded, namely, there exists \(L>0\) such that
	\[
	\sup_{i\in\mathcal I_N}\sup_{u_i\in\mathcal U_i}\|u_i\|_{\mathcal H^2([t,T];\mathbb R^{k_i})}\le L .
	\]
	Decompose the state space as above. Accordingly, write
	\[
	Q_i=\bigl(Q_i^{r\ell}\bigr)_{r,\ell=1}^N,\qquad G_i=\bigl(G_i^{r\ell}\bigr)_{r,\ell=1}^N,
	\]
	where
	\[
	Q_i^{r\ell}\in\mathbb R^{n_r\times n_\ell},\qquad G_i^{r\ell}\in\mathbb R^{n_r\times n_\ell}.
	\]
    Also write
    \[
    S_i=\big(S_i^{ra}\big)_{r,a=1}^N,\qquad S_i^{ra}\in\mathbb R^{n_r\times k_a},\qquad S_i^a:=S_iE_a=
        \begin{pmatrix}
    	S_i^{1a}\\
    	\vdots\\
	    S_i^{Na}
       \end{pmatrix}
    \in\mathbb R^{n\times k_a}.
    \]
    Moreover
	\[
	R_i=\bigl(R_i^{ab}\bigr)_{a,b=1}^N,\qquad R_i^{ab}\in\mathbb R^{k_a\times k_b},\qquad \rho_i=
	    \begin{pmatrix}
		\rho_i^1\\
		\vdots\\
		\rho_i^N
	    \end{pmatrix},\qquad\rho_i^j\in\mathbb R^{k_j},
	\]
	For \(i\ne j\), define
	\[
	\begin{aligned}
	    \Lambda_{ij}:={}&T\sum_{r,\ell=1}^N\sqrt{\chi_{rj}\chi_{\ell i}}\,\|Q_i^{r\ell}-Q_j^{r\ell}\|_{L^\infty(0,T)}\\
	    &+\sqrt T\sum_{r=1}^N\sqrt{\chi_{rj}}\,\|S_i^{r,i}-S_j^{r,i}\|_{L^\infty(0,T)}+\sqrt T\sum_{r=1}^N\sqrt{\chi_{ri}}\,\|S_i^{r,j}-S_j^{r,j}\|_{L^\infty(0,T)}\\
	    &+\|R_i^{ji}-R_j^{ji}\|_{L^\infty(0,T)}+\sum_{r,\ell=1}^N\sqrt{\chi_{rj}\chi_{\ell i}}\,\|G_i^{r\ell}-G_j^{r\ell}\| .
	\end{aligned}
	\]
    Then the game is an open-loop $\alpha$-potential game with
	\begin{equation}\label{eq:open-loop-alpha-estimate}
		\alpha\le L^2\max_{1\le i\le N}\sum_{j\ne i}\Lambda_{ij}.
	\end{equation}
\end{mypro}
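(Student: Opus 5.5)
We apply the sharper estimate \eqref{eq:alpha-estimate} of the $\alpha$-potential theorem from Section~\ref{sec:preliminaries}. Its hypotheses hold because $0\in A_i$ and each $A_i$ is convex, so $\mathcal U^{(N)}$ is convex and contains $0$. It therefore suffices to bound, for fixed $i$, $u\in\mathcal U^{(N)}$, $u_i'\in\mathcal U_i$ and $u''\in\mathcal U^{(N)}$, the sum $\sum_{j}|\frac{\delta^2 J_i}{\delta u_i\delta u_j}(u;u_i',u_j'')-\frac{\delta^2 J_j}{\delta u_j\delta u_i}(u;u_j'',u_i')|$.

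The open-loop second-order representation displayed after \eqref{eq:open-loop-first-derivative} does not depend on $u$ at all; the game is LQ and the variational processes depend only on the directions. Consequently the regularity and continuity hypotheses of the theorem are trivially satisfied: boundedness follows from Lemma~\ref{lem:blockwise-first-variation} and Cauchy--Schwarz. For the same reason the diagonal term $j=i$ vanishes identically.

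For $j\ne i$, we write $\frac{\delta^2 J_j}{\delta u_j\delta u_i}(u;u_j'',u_i')$ by swapping the roles $i\leftrightarrow j$ in the formula. This gives
\[
\mathbb E\Big[\int_t^T\Big\{(Y^{i,u_i'})^\top Q_jY^{j,u_j''}+(Y^{i,u_i'})^\top S_jE_ju_j''+(Y^{j,u_j''})^\top S_jE_iu_i'+(E_iu_i')^\top R_jE_ju_j''\Big\}ds+(Y_T^{i,u_i'})^\top G_jY_T^{j,u_j''}\Big].
\]
Using the symmetry of $Q_j$, $G_j$ and $R_j$, we rewrite each term so that it pairs the same objects as in $\frac{\delta^2J_i}{\delta u_i\delta u_j}$. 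Thus $(Y^i)^\top Q_jY^j=(Y^j)^\top Q_jY^i$, and $(E_iu_i')^\top R_jE_ju_j''=(u_j'')^\top R_j^{ji}u_i'$. The difference then becomes
\[
\mathbb E\Big[\int_t^T\Big\{(Y^{j})^\top(Q_i-Q_j)Y^{i}+(Y^{j})^\top(S_i^{i}-S_j^{i})u_i'+(Y^{i})^\top(S_i^{j}-S_j^{j})u_j''+(u_j'')^\top(R_i^{ji}-R_j^{ji})u_i'\Big\}ds+(Y^{j}_T)^\top(G_i-G_j)Y^{i}_T\Big],
\]
with $Y^i=Y^{i,u_i'}$ and $Y^j=Y^{j,u_j''}$.

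Each term is bounded blockwise. For the $Q$ term, we expand $(Y^j)^\top(Q_i-Q_j)Y^i=\sum_{r,\ell}(Y^{j,r})^\top(Q_i^{r\ell}-Q_j^{r\ell})Y^{i,\ell}$. Applying Cauchy--Schwarz in $\mathcal H^2$ and then Lemma~\ref{lem:blockwise-first-variation} gives $\|Y^{j,r}\|_{\mathcal H^2}\|Y^{i,\ell}\|_{\mathcal H^2}\le T\sqrt{\chi_{rj}\chi_{\ell i}}\,\|u_j''\|\|u_i'\|\le T\sqrt{\chi_{rj}\chi_{\ell i}}L^2$. The $S$ terms are treated the same way: the $Y^{j,r}$ factor contributes $\sqrt{T\chi_{rj}}L$, the control factor contributes $L$, and the relevant block of $S_i^i-S_j^i$ is $S_i^{r,i}-S_j^{r,i}$. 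The symmetric $S^j$ term is analogous. The $R$ term is bounded directly by $\|R_i^{ji}-R_j^{ji}\|_{L^\infty}L^2$. The terminal term uses the pointwise estimate $\mathbb E|Y_T^{i,v_i,r}|^2\le\chi_{ri}\|v_i\|^2$ without the factor $T$. Summing these bounds gives $\Lambda_{ij}L^2$ for each $j\ne i$; summing over $j\ne i$ and taking the maximum over $i$ yields \eqref{eq:open-loop-alpha-estimate}.

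The main obstacle is the index bookkeeping in the symmetrization step. One must check that after transposition the $S$ terms really pair $S_i^{r,i}$ with $S_j^{r,i}$ against $Y^{j,r}$, and $S_i^{r,j}$ with $S_j^{r,j}$ against $Y^{i,r}$. One must also check that $R_j$ enters only through its block $R_j^{ji}=(R_j^{ij})^\top$. A transposition error here would produce mismatched blocks in $\Lambda_{ij}$, so I would verify this step carefully before assembling the estimates.
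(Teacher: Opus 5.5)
Your proposal is correct and follows essentially the same route as the paper: you write the difference of the mixed second-order derivatives blockwise in the $Q$, $S$, $R$ and $G$ coefficients, bound each term by Cauchy--Schwarz together with Lemma~\ref{lem:blockwise-first-variation} to obtain $\Lambda_{ij}\|v_i\|_{\mathcal H^2}\|v_j\|_{\mathcal H^2}$, and conclude with \eqref{eq:alpha-estimate}. Two of your steps are left implicit in the paper: the check that the diagonal term $j=i$ vanishes by symmetry of $Q_i$, $R_i$ and $G_i$, and the check that the theorem's regularity hypotheses hold because the second-order derivative does not depend on the base point $u$.
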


\begin{proof}
	For \(i\ne j\), take \(v_i\in\mathcal U_i\) and \(v_j\in\mathcal U_j\). Using the block decompositions, the mixed second-order derivative difference can be written as
	\[
	\begin{aligned}
	    &\frac{\delta^2J_i^{t,x}}{\delta u_i\delta u_j}(u;v_i,v_j)-\frac{\delta^2J_j^{t,x}}{\delta u_j\delta u_i}(u;v_j,v_i)\\
	    ={}&\mathbb E\int_t^T\Bigg[\sum_{r,\ell=1}^N\left\langle Y_s^{j,v_j,r},\bigl(Q_i^{r\ell}(s)-Q_j^{r\ell}(s)\bigr)Y_s^{i,v_i,\ell}\right\rangle
	    +\sum_{r=1}^N\left\langle Y_s^{j,v_j,r},\bigl(S_i^{r,i}(s)-S_j^{r,i}(s)\bigr)v_{i,s}\right\rangle\\
	    &\qquad+\sum_{r=1}^N\left\langle Y_s^{i,v_i,r},\bigl(S_i^{r,j}(s)-S_j^{r,j}(s)\bigr)v_{j,s}\right\rangle
	    +v_{j,s}^\top\bigl(R_i^{ji}(s)-R_j^{ji}(s)\bigr)v_{i,s}\Bigg]ds\\
	    &\quad+\mathbb E\left[\sum_{r,\ell=1}^N\left\langle Y_T^{j,v_j,r},\bigl(G_i^{r\ell}-G_j^{r\ell}\bigr)Y_T^{i,v_i,\ell}\right\rangle\right].
	\end{aligned}
	\]
	Therefore
	\[
	\begin{aligned}
	    &\left|\frac{\delta^2J_i^{t,x}}{\delta u_i\delta u_j}(u;v_i,v_j)-\frac{\delta^2J_j^{t,x}}{\delta u_j\delta u_i}(u;v_j,v_i)\right|\\
	    \le{}&\sum_{r,\ell=1}^N\|Q_i^{r\ell}-Q_j^{r\ell}\|_{L^\infty(0,T)}\|Y^{j,v_j,r}\|_{\mathcal H^2}\|Y^{i,v_i,\ell}\|_{\mathcal H^2}
        +\sum_{r=1}^N\|S_i^{r,i}-S_j^{r,i}\|_{L^\infty(0,T)}\|Y^{j,v_j,r}\|_{\mathcal H^2}\|v_i\|_{\mathcal H^2}\\
        &+\sum_{r=1}^N\|S_i^{r,j}-S_j^{r,j}\|_{L^\infty(0,T)}\|Y^{i,v_i,r}\|_{\mathcal H^2}\|v_j\|_{\mathcal H^2}
        +\|R_i^{ji}-R_j^{ji}\|_{L^\infty(0,T)}\|v_i\|_{\mathcal H^2}\|v_j\|_{\mathcal H^2}\\
	    &+\sum_{r,\ell=1}^N\|G_i^{r\ell}-G_j^{r\ell}\|\|Y_T^{j,v_j,r}\|_{L^2(\Omega)}\|Y_T^{i,v_i,\ell}\|_{L^2(\Omega)}.
	\end{aligned}
	\]
	By Lemma~\ref{lem:blockwise-first-variation},
	\[
	\|Y^{i,v_i,r}\|_{\mathcal H^2}\le\sqrt{T\chi_{ri}}\,\|v_i\|_{\mathcal H^2},\qquad\|Y_T^{i,v_i,r}\|_{L^2(\Omega)}\le\sqrt{\chi_{ri}}\,\|v_i\|_{\mathcal H^2}.
	\]
	Thus
	\[
	\left|\frac{\delta^2J_i^{t,x}}{\delta u_i\delta u_j}(u;v_i,v_j)-\frac{\delta^2J_j^{t,x}}{\delta u_j\delta u_i}(u;v_j,v_i)\right|
	\le\Lambda_{ij}\|v_i\|_{\mathcal H^2}\|v_j\|_{\mathcal H^2}.
	\]
	Since the admissible controls are \(\mathcal H^2\)-bounded by \(L\), \eqref{eq:alpha-estimate} gives
	\[
	\alpha\le L^2\max_{1\le i\le N}\sum_{j\ne i}\Lambda_{ij}.
	\]
    The proof is complete.
\end{proof}

\begin{Remark}
	The blockwise form in Proposition~\ref{pro:open-loop-LQ-alpha} is used  for the estimate of \(\alpha\). The reason for keeping the state blocks is that the mixed second-order linear derivative involves the products of the first variations generated by two different players. For example, the term
	\[
	(Y_s^{j,v_j})^\top(Q_i(s)-Q_j(s))Y_s^{i,v_i}
	\]
is estimated as
	\[
	\sum_{r,\ell=1}^N\left\langle Y_s^{j,v_j,r},\bigl(Q_i^{r\ell}(s)-Q_j^{r\ell}(s)\bigr)Y_s^{i,v_i,\ell}\right\rangle.
	\]
	Hence only those blocks that interact with the nonzero components of the variations \(Y^{j,v_j}\) and \(Y^{i,v_i}\) contribute to the estimate. If one estimates this term directly by the full matrix norm
	\[
	\|Q_i-Q_j\|\,\|Y^{j,v_j}\|\,\|Y^{i,v_i}\|,
	\]
then the block structure is lost, and blocks which do not actually contribute to the mixed derivative are also included in the bound. 
\end{Remark}

   The next example shows that the estimate of $\alpha$ in Proposition~\ref{pro:open-loop-LQ-alpha} is not merely a special case of that in \cite{GuoLiZhangBSDE2025}.  
In fact, it can be strictly sharper in some cases.
\begin{example}
	\label{ex:sparse-directed-comparison}	
	Let \(N\geq5\), and let every state and control component be one-dimensional. Assume that
	\[
	\sup_{i\in\mathcal I_N}\sup_{u_i\in\mathcal U_i}\|u_i\|_{\mathcal H^2([t,T])}\leq L,
	\]
	Consider
	\[
	\left\{
	\begin{aligned}
		dX_s^1&=\left(\frac{\kappa}{N^2}X_s^2+u_{1,s}\right)ds+\sigma_1\,dW_s^1,\\
		dX_s^r&=u_{r,s}\,ds+\sigma_r\,dW_s^r,\qquad r=2,\ldots,N,
	\end{aligned}
	\right.
	\]
	where \(\kappa>0\) is independent of \(N\). The cost functions are
	\[
	J_1^{t,x}(u)=\frac12\mathbb E\int_t^T u_{1,s}^2\,ds,
	\]
	\[
	J_2^{t,x}(u)=\frac12\mathbb E\int_t^T\left[(X_s^2)^2+u_{2,s}^2+\frac1{N^2}(X_s^2-X_s^3)^2\right]ds,
	\]
	\[
	J_i^{t,x}(u)=\frac12\mathbb E\int_t^T\left[(X_s^i)^2+u_{i,s}^2\right]ds,\qquad i=3,\ldots,N.
	\]
Hence
	\[
	A^{12}=\frac{\kappa}{N^2},\qquad A^{r\ell}=0\quad\text{for }(r,\ell)\neq(1,2),
	\]
	and
	\[
	B_i^r=\delta_{ri},\qquad C^{h,r\ell}=D_i^{h,r}=0.
	\]
	The nonzero state-cost coefficients are
	\[
	Q_2^{22}=1+\frac1{N^2},\qquad Q_2^{23}=Q_2^{32}=-\frac1{N^2},\qquad Q_2^{33}=\frac1{N^2},
	\]
	and
	\[
	Q_i^{ii}=1,\qquad i=3,\ldots,N.
	\]
Moreover,
	\[
	R_i^{ii}=1,\qquad S_i=G_i=0,
	\]
	with all other blocks of \(R_i\) equal to zero.
	
	The constants in Lemma~\ref{lem:blockwise-first-variation} satisfy
	\[
	\gamma_{11}=1+\frac{\kappa}{N^2},\qquad\gamma_{12}=\frac{\kappa}{N^2},\qquad\gamma_{rr}=1,\quad r\geq2,
	\]
	Hence
	\[
	\chi_{ri}=e^T\delta_{ri},\qquad r=2,\ldots,N.
	\]

	Since \(S_i=G_i=0\) and
	\[
	R_i^{ji}-R_j^{ji}=0,\qquad i\neq j,
	\]
	the constant in Proposition~\ref{pro:open-loop-LQ-alpha} reduces, for \(i,j\geq2\), to
	\[
	\Lambda_{ij}=Te^T|Q_i^{ji}-Q_j^{ji}|.
	\]
	
	Therefore,
	\[
	\Lambda_{23}=\Lambda_{32}=\frac{Te^T}{N^2},\qquad\Lambda_{ij}=0\quad\text{for all other }i\neq j.
	\]
	Consequently,
	\begin{equation*}
		\alpha_N\leq L^2\max_{i\in\mathcal I_N}\sum_{j\neq i}\Lambda_{ij}=\frac{L^2Te^T}{N^2}.
	\end{equation*}
	
	We compare this result with \cite{GuoLiZhangBSDE2025}. The only nonzero derivative of the drift is $\partial_{x_2}b_1=\frac{\kappa}{N^2}.$ So we take $L_y^b=\frac{\kappa}{N}$. Substituting the coefficients yields for all \(i\neq j\),
	\[
	\widetilde C^{i,j}=O(N^{-2}),\qquad\max_{i\in\mathcal I_N}\sum_{j\neq i}\widetilde C^{i,j}=O(N^{-1}).
	\]
	Thus the estimate obtained from the approach in \cite{GuoLiZhangBSDE2025} is of order \(O(N^{-1})\).
\end{example}

\subsection{Reduction to a finite-dimensional control problem}
	
	Define the extended state process
	\[
	\mathbf X_s^u:=\big(X_s^u,Y_s^{1,u_1},\ldots,Y_s^{N,u_N}\big)\in\mathbb R^{n(N+1)}.
	\]
	For
	\[
	\mathbf x\equiv(x,y_1,\ldots,y_N)\in\mathbb R^{n(N+1)},\qquad a\equiv(a_1^\top,\ldots,a_N^\top)^\top\in\mathbb A,
	\]
	define
	\[
	\mu(t,\mathbf x,a):=
	\begin{pmatrix}
		A_tx+B_ta+b_t\\
		A_ty_1+B_1(t)a_1\\
		\vdots\\
		A_ty_N+B_N(t)a_N
	\end{pmatrix},
	\]
	and, for each \(h=1,\ldots,d_W\),
	\[
	\Lambda^h(t,\mathbf x,a):=
	\begin{pmatrix}
		C_t^hx+D_t^ha+\sigma_t^h\\
		C_t^hy_1+D_1^h(t)a_1\\
		\vdots\\
		C_t^hy_N+D_N^h(t)a_N
	\end{pmatrix},
	\]
	$$
	\Lambda(t,\mathbf x,a)=\bigl(\Lambda^1(t,\mathbf x,a),\ldots,\Lambda^{d_W}(t,\mathbf x,a)\bigr)\in\mathbb R^{n(N+1)\times d_W}.
	$$
	For any $(t,\mathbf x)\in[0,T]\times\mathbb R^{n(N+1)}$ and $u\in\mathcal U^{(N)}$, the corresponding extended state $\mathbf X^{t,\mathbf x,u}$ satisfies
	\begin{equation}\label{eq:extended-state-SDE}
		\left\{
		\begin{aligned}
			d\mathbf X_s^{t,\mathbf x,u}={}&\mu(s,\mathbf X_s^{t,\mathbf x,u},u_s)\,ds+\sum_{h=1}^{d_W}\Lambda^h(s,\mathbf X_s^{t,\mathbf x,u},u_s)\,dW_s^h,\qquad s\in[t,T],\\
			\mathbf X_t^{t,\mathbf x,u}={}&\mathbf x .
		\end{aligned}
		\right.
	\end{equation}
    For each admissible control $u\in\mathcal U^{(N)}$, \eqref{eq:extended-state-SDE} admits a unique strong solution in $\mathcal S^2([t,T];\mathbb R^{n(N+1)})$.
	
	Define
	\begin{equation*}
		\mathcal L^a\varphi(t,\mathbf x):=\mu(t,\mathbf x,a)^\top \partial_{\mathbf x}\varphi(t,\mathbf x)
        +\frac12\operatorname{tr}\bigl(\Lambda(t,\mathbf x,a)\Lambda(t,\mathbf x,a)^\top \partial_{\mathbf x\mathbf x}^2\varphi(t,\mathbf x)\bigr).
	\end{equation*}
	Here and below, $\partial_{\mathbf x}$ and $\partial_{\mathbf x\mathbf x}^2$ denote the gradient and Hessian with respect to the extended state variable. Moreover, there
exist a constant $C>0$ and a function $\ell\in L^1(0,T)$ such that
	\begin{equation}\label{eq:FG-quadratic-growth}
		|F(t,\mathbf x,a)|\leq \ell(t)+C\bigl(|\mathbf x|^2+|a|^2\bigr),\qquad |G(\mathbf x)|\leq C\bigl(1+|\mathbf x|^2\bigr).
	\end{equation}	
    Define
    \[
    J^\Phi(t,\mathbf x;u):=\mathbb E\left[\int_t^T F(s,\mathbf X_s^{t,\mathbf x,u},u_s)\,ds+G(\mathbf X_T^{t,\mathbf x,u})\right],
    \]
where \(\mathbf X^{t,\mathbf x,u}\) denotes the extended state starting from \(\mathbf x\) at time \(t\). By \eqref{eq:FG-quadratic-growth} and the standard
estimate for linear SDEs, \(J^\Phi(t,\mathbf x;u)\) is well defined for every \(u\in\mathcal U^{(N)}\). Define the value function
    \begin{equation}\label{eq:potential-value-function}
	    \mathcal V(t,\mathbf x):=\inf_{u\in\mathcal U^{(N)}}J^\Phi(t,\mathbf x;u).
    \end{equation}
    Write $\mathbf x_0:=(x,0,\ldots,0)$. We have,
    \[
    \inf_{u\in\mathcal U^{(N)}}\Phi^{t,x}(u)=\mathcal V(t,\mathbf x_0).
    \]

    The HJB equation associated with \eqref{eq:potential-value-function} is
    \begin{equation}\label{eq:potential-HJB}
	\left\{
	\begin{aligned}
		&\partial_t V(t,\mathbf x)+\inf_{a\in\mathbb A}\left\{\mathcal L^aV(t,\mathbf x)+F(t,\mathbf x,a)\right\}=0,\qquad (t,\mathbf x)\in[0,T)\times\mathbb R^{n(N+1)},\\
		&V(T,\mathbf x)={}G(\mathbf x).
	\end{aligned}
	\right.
    \end{equation}

    We have transformed the minimization of the \(\alpha\)-potential function into a finite-dimensional stochastic control problem with extended state
\eqref{eq:extended-state-SDE}. The following result is a standard consequence of the classical verification theorem for finite-dimensional stochastic control. See, for instance, \cite[Section~3.5]{Pham2009}.
	
\begin{mythm}[Verification theorem]\label{thm:verification}
	Assume that there exists $V\in C^{1,2}([0,T]\times\mathbb R^{n(N+1)})$ with at most quadratic growth such that \(V\) satisfies the HJB equation \eqref{eq:potential-HJB}. 
    Assume further that there exists a measurable map $\widehat a:[0,T]\times\mathbb R^{n(N+1)}\to \mathbb A$ such that
	\begin{equation*}
		\widehat a(t,\mathbf x)\in\arg\min_{a\in\mathbb A}\left\{\mathcal L^aV(t,\mathbf x)+F(t,\mathbf x,a)\right\},
	\end{equation*}
    and that, for every \((t,\mathbf x)\), the extended state equation
	\begin{equation}\label{eq:closed-loop-extended-state}
		\left\{
		\begin{aligned}
			d\widehat{\mathbf X}_s={}&\mu(s,\widehat{\mathbf X}_s,\widehat a(s,\widehat{\mathbf X}_s))\,ds
            +\sum_{h=1}^{d_W}\Lambda^h(s,\widehat{\mathbf X}_s,\widehat a(s,\widehat{\mathbf X}_s))\,dW_s^h,\qquad s\in[t,T],\\
			\widehat{\mathbf X}_t={}&\mathbf x,
		\end{aligned}
		\right.
	\end{equation}
	admits a square-integrable strong solution and the corresponding control $\widehat u_s:=\widehat a(s,\widehat{\mathbf X}_s)$ belongs to \(\mathcal U^{(N)}\). Then
	\[
	V(t,\mathbf x)=\mathcal V(t,\mathbf x).
	\]
	In particular, for \(\mathbf x_0=(x,0,\ldots,0)\), let \(\widehat{\mathbf X}^{t,\mathbf x_0}\) be the solution of \eqref{eq:closed-loop-extended-state} starting from \(\mathbf x_0\) at time \(t\), and set
	\[
	\widehat u_s:=\widehat a(s,\widehat{\mathbf X}_s^{t,\mathbf x_0}),\qquad s\in[t,T].
	\]
	Then \(\widehat u\) minimizes the open-loop \(\alpha\)-potential function:
	\begin{equation*}
		\Phi^{t,x}(\widehat u)=\inf_{u\in\mathcal U^{(N)}}\Phi^{t,x}(u).
	\end{equation*}
	If the assumptions of Proposition~\ref{pro:open-loop-LQ-alpha} hold on the same admissible control class, then \(\widehat u\) is an open-loop \(\alpha\)-Nash equilibrium of the original game, with \(\alpha\) bounded by \eqref{eq:open-loop-alpha-estimate}.
\end{mythm}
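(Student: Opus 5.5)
The plan follows the classical verification argument: apply It\^o's formula to $V$ along the extended state, then transfer the conclusion to $\Phi^{t,x}$ through the Proposition identifying $\Phi^{t,x}(u)$ with $J^\Phi(t,\mathbf x_0;u)$, and finally invoke the $\alpha$-potential machinery of Section~\ref{sec:preliminaries}.

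\textbf{Lower bound.} Fix $(t,\mathbf x)$ and an arbitrary $u\in\mathcal U^{(N)}$, and let $\mathbf X=\mathbf X^{t,\mathbf x,u}$ solve \eqref{eq:extended-state-SDE}. Since $V\in C^{1,2}$, It\^o's formula gives
\[
V(T,\mathbf X_T)=V(t,\mathbf x)+\int_t^T\bigl(\partial_tV+\mathcal L^{u_s}V\bigr)(s,\mathbf X_s)\,ds+\sum_{h=1}^{d_W}\int_t^T\partial_{\mathbf x}V(s,\mathbf X_s)^\top\Lambda^h(s,\mathbf X_s,u_s)\,dW_s^h .
\]
The HJB equation \eqref{eq:potential-HJB} yields $\partial_tV+\mathcal L^{u_s}V\ge -F(s,\mathbf X_s,u_s)$, and $V(T,\cdot)=G$. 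Hence, once the stochastic integral has zero expectation,
\[
V(t,\mathbf x)\le J^\Phi(t,\mathbf x;u).
\]

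\textbf{Main obstacle: the martingale property.} The delicate step is showing that the stochastic integral is a true martingale. The quadratic growth of $V$ alone does not control $\partial_{\mathbf x}V$, so I would localize with stopping times $\tau_n:=\inf\{s\ge t:|\mathbf X_s|\ge n\}\wedge T$. Along $[t,\tau_n]$ the integrand is bounded, so the inequality holds with $T$ replaced by $\tau_n$. I then pass to the limit $n\to\infty$ by dominated convergence, using three facts:
\begin{itemize}
\item $\mathbf X\in\mathcal S^2$;
\item $u\in\mathcal H^2$;
\item the quadratic growth bounds \eqref{eq:FG-quadratic-growth} for $F$ and $G$, together with the quadratic growth of $V$.
\end{itemize}
These give a dominating function of the form $\ell(s)+C(\sup_s|\mathbf X_s|^2+|u_s|^2)$, which is integrable.

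\textbf{Equality and conclusion.} Repeat the argument with $\widehat u_s=\widehat a(s,\widehat{\mathbf X}_s)$, using the assumed square-integrable strong solution of \eqref{eq:closed-loop-extended-state}. Because $\widehat a$ attains the infimum in the HJB equation, the inequality becomes an equality, so $V(t,\mathbf x)=J^\Phi(t,\mathbf x;\widehat u)$. Since $\widehat u\in\mathcal U^{(N)}$, this gives $V=\mathcal V$.

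Next take $\mathbf x=\mathbf x_0=(x,0,\ldots,0)$. The extended state started at $\mathbf x_0$ has components $(X^u,Y^{1,u_1},\ldots,Y^{N,u_N})$, because the variational processes start at zero. The Proposition expressing $\Phi^{t,x}$ through $F$ and $G$ then gives $\Phi^{t,x}(u)=J^\Phi(t,\mathbf x_0;u)$ for all $u$. It follows that
\[
\Phi^{t,x}(\widehat u)=\mathcal V(t,\mathbf x_0)=\inf_{u\in\mathcal U^{(N)}}\Phi^{t,x}(u).
\]

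Finally, Proposition~\ref{pro:open-loop-LQ-alpha} shows that $\Phi^{t,x}$ is an $\alpha$-potential function with $\alpha$ bounded by \eqref{eq:open-loop-alpha-estimate}. The proposition in Section~2 on minimizing an $\alpha$-potential function, applied with $\varepsilon=0$, then shows that $\widehat u$ is an open-loop $\alpha$-Nash equilibrium.
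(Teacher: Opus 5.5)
Your proposal is correct and follows essentially the paper's route. The paper simply invokes the classical verification theorem \cite[Section~3.5]{Pham2009} for the extended-state problem, uses $\Phi^{t,x}(u)=J^\Phi(t,\mathbf x_0;u)$ to get $\inf_u\Phi^{t,x}(u)=\mathcal V(t,\mathbf x_0)$, and applies the Section~2 proposition with $\varepsilon=0$; you have written out that standard localization argument explicitly. One small correction: on $[t,\tau_n]$ the stochastic integrand is not bounded, since $u_s$ is unbounded and $\sigma^h$ is only in $L^2$, but $\partial_{\mathbf x}V$ is bounded there and $\Lambda^h(s,\mathbf X_s,u_s)$ is square-integrable, so the stopped integral is still a true martingale and the rest of your argument is unchanged.
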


    We now rewrite the reduced control problem in the standard LQ form. Using the notation introduced above, the extended state dynamics can be written as
    \begin{equation*}
	\left\{
	\begin{aligned}
		&d\mathbf X_s={}\bigl(\mathcal A_s\mathbf X_s+\mathcal B_su_s+\mathfrak b_s\bigr)ds
        +\sum_{h=1}^{d_W}\bigl(\mathcal C_s^h\mathbf X_s+\mathcal D_s^hu_s+\mathfrak\sigma_s^h\bigr)dW_s^h,\\
		&\mathbf X_t={}\mathbf x,
	\end{aligned}
	\right.
    \end{equation*}
where
    \[
    \mathcal A_t\equiv\operatorname{diag}\bigl(A_t,\ldots,A_t\bigr),
    \]
with \(N+1\) diagonal blocks, and
    \[
    \mathcal B_t\equiv\begin{pmatrix}
	    B_1(t)&B_2(t)&\cdots&B_N(t)\\
	    B_1(t)&0&\cdots&0\\
        0&B_2(t)&\cdots&0\\
	    \vdots&\vdots&\ddots&\vdots\\
	    0&0&\cdots&B_N(t)
    \end{pmatrix},\qquad\mathfrak b_t\equiv
    \begin{pmatrix}
	    b_t\\
	    0\\
	    \vdots\\
	    0
    \end{pmatrix}.
    \]
    For each \(h=1,\ldots,d_W\),
    \[
    \mathcal C_t^h\equiv\operatorname{diag}\bigl(C_t^h,\ldots,C_t^h\bigr),
    \]
with \(N+1\) diagonal blocks,
    \[
    \mathcal D_t^h\equiv
    \begin{pmatrix}
	    D_1^h(t)&D_2^h(t)&\cdots&D_N^h(t)\\
	    D_1^h(t)&0&\cdots&0\\
	    0&D_2^h(t)&\cdots&0\\
        \vdots&\vdots&\ddots&\vdots\\
	    0&0&\cdots&D_N^h(t)
    \end{pmatrix},\qquad\mathfrak\sigma_t^h\equiv
    \begin{pmatrix}
	    \sigma_t^h\\
	    0\\
	    \vdots\\
    	0
    \end{pmatrix}.
    \]

    The running cost has the quadratic representation
    \begin{equation*}
	    F(t,\mathbf x,u)=\mathbf x^\top\mathcal Q_t\mathbf x+2\mathbf x^\top\mathcal S_tu+u^\top\mathcal R_tu+2\mathfrak q_t^\top\mathbf x+2\mathfrak\rho_t^\top u,
    \end{equation*}
where
    \[
    \mathcal Q_t=\frac14\begin{pmatrix}
	    0&2Q_1(t)&2Q_2(t)&\cdots&2Q_N(t)\\[1mm]
	    2Q_1(t)&-2Q_1(t)&-\bigl(Q_1(t)+Q_2(t)\bigr)&\cdots&-\bigl(Q_1(t)+Q_N(t)\bigr)\\[1mm]
	    2Q_2(t)&-\bigl(Q_2(t)+Q_1(t)\bigr)&-2Q_2(t)&\cdots&-\bigl(Q_2(t)+Q_N(t)\bigr)\\
	    \vdots&\vdots&\vdots&\ddots&\vdots\\[1mm]
    	2Q_N(t)&-\bigl(Q_N(t)+Q_1(t)\bigr)&-\bigl(Q_N(t)+Q_2(t)\bigr)&\cdots&-2Q_N(t)
    \end{pmatrix},
    \]
    \[
    \mathcal S_t=\frac14\begin{pmatrix}
	    2S_1^1(t) & 2S_2^2(t) & \cdots & 2S_N^N(t)\\
	    0 & S_1^2(t)-S_2^2(t) & \cdots & S_1^N(t)-S_N^N(t)\\
	    S_2^1(t)-S_1^1(t) & 0 & \cdots & S_2^N(t)-S_N^N(t)\\
	    \vdots & \vdots & \ddots & \vdots\\
	    S_N^1(t)-S_1^1(t) & S_N^2(t)-S_2^2(t) & \cdots & 0
    \end{pmatrix},
    \]
    \[
    \mathcal R_t=\frac14\begin{pmatrix}
	    2R_1^{11}(t)&R_1^{12}(t)+\bigl(R_2^{21}(t)\bigr)^\top&\cdots&R_1^{1N}(t)+\bigl(R_N^{N1}(t)\bigr)^\top\\[2mm]
	    R_2^{21}(t)+\bigl(R_1^{12}(t)\bigr)^\top&2R_2^{22}(t)&\cdots&R_2^{2N}(t)+\bigl(R_N^{N2}(t)\bigr)^\top\\
        \vdots&\vdots&\ddots&\vdots\\[2mm]
	    R_N^{N1}(t)+\bigl(R_1^{1N}(t)\bigr)^\top&R_N^{N2}(t)+\bigl(R_2^{2N}(t)\bigr)^\top&\cdots&2R_N^{NN}(t)
    \end{pmatrix},
    \]
    \[
    \mathfrak q_t=\frac12
    \begin{pmatrix}
	    0\\
    	q_1(t)\\
	    q_2(t)\\
	    \vdots\\
	    q_N(t)
    \end{pmatrix},\qquad\mathfrak\rho_t=\frac12
    \begin{pmatrix}
	    \rho_1^1(t)\\
	    \rho_2^2(t)\\
	    \vdots\\
	    \rho_N^N(t)
    \end{pmatrix}.
    \]

    Similarly, the terminal cost is
    \begin{equation*}
	    G(\mathbf x)=\mathbf x^\top\mathcal G\mathbf x+2\mathfrak g^\top\mathbf x,
    \end{equation*}
where
    \[
    \mathcal G=\frac14
    \begin{pmatrix}
	    0&2G_1&2G_2&\cdots&2G_N\\[1mm]
	    2G_1&-2G_1&-(G_1+G_2)&\cdots&-(G_1+G_N)\\[1mm]
	    2G_2&-(G_2+G_1)&-2G_2&\cdots&-(G_2+G_N)\\
	    \vdots&\vdots&\vdots&\ddots&\vdots\\[1mm]
	    2G_N&-(G_N+G_1)&-(G_N+G_2)&\cdots&-2G_N
    \end{pmatrix},\qquad\mathfrak g=\frac12
    \begin{pmatrix}
	    0\\
	    g_1\\
	    g_2\\
	    \vdots\\
	    g_N
    \end{pmatrix}.
    \]

  The reduced problem is a standard stochastic LQ control problem. Define
    \begin{equation*}
	\begin{aligned}
		H_t&:=\mathcal R_t+\sum_{h=1}^{d_W}(\mathcal D_t^h)^\top P_t\mathcal D_t^h,\\
		\Theta_t&:=\mathcal B_t^\top P_t+\sum_{h=1}^{d_W}(\mathcal D_t^h)^\top P_t\mathcal C_t^h+\mathcal S_t^\top,\\
		\vartheta_t&:=\mathcal B_t^\top p_t+\sum_{h=1}^{d_W}(\mathcal D_t^h)^\top P_t\mathfrak\sigma_t^h+\mathfrak\rho_t,
	\end{aligned}
    \end{equation*}
where \(P,p,\eta\) solve the following Riccati system:
    \begin{equation}\label{eq:potential-Riccati-P}
	\left\{
	\begin{aligned}
		\dot P_t&+P_t\mathcal A_t+\mathcal A_t^\top P_t+\sum_{h=1}^{d_W}(\mathcal C_t^h)^\top P_t\mathcal C_t^h+\mathcal Q_t-\Theta_t^\top H_t^{-1}\Theta_t=0,\\
		P_T&=\mathcal G,
	\end{aligned}
	\right.
    \end{equation}
    \begin{equation}\label{eq:potential-Riccati-p}
	\left\{
	\begin{aligned}
		\dot p_t&+\mathcal A_t^\top p_t+P_t\mathfrak b_t+\sum_{h=1}^{d_W}(\mathcal C_t^h)^\top P_t\mathfrak\sigma_t^h+\mathfrak q_t-\Theta_t^\top H_t^{-1}\vartheta_t=0,\\
		p_T&=\mathfrak g,
	\end{aligned}
	\right.
    \end{equation}
    \begin{equation}\label{eq:potential-Riccati-eta}
	\left\{
	\begin{aligned}
		\dot\eta_t&+2\mathfrak b_t^\top p_t+\sum_{h=1}^{d_W}(\mathfrak\sigma_t^h)^\top P_t\mathfrak\sigma_t^h-\vartheta_t^\top H_t^{-1}\vartheta_t=0,\\
		\eta_T&=0.
	\end{aligned}
	\right.
    \end{equation}

    Suppose that \eqref{eq:potential-Riccati-P}--\eqref{eq:potential-Riccati-eta} admit a solution on \([t,T]\) and that there exists \(\delta>0\) such that 
$H_s\geq\delta I_k,\forall s\in[t,T]$. This assumption is satisfied, for instance, under the standard conditions
    \[
    \mathcal G\ge0,\qquad\mathcal R_s\ge\delta I_k,\qquad
        \begin{pmatrix}
	        \mathcal Q_s & \mathcal S_s\\
	        \mathcal S_s^\top & \mathcal R_s
        \end{pmatrix}\ge0,\qquad\text{for a.e. }s\in[t,T],
    \]

    Then, by completing the square in the control variable, the infimum in the HJB equation is attained at
    \[
    \widehat a(s,\mathbf x)=-H_s^{-1}\big(\Theta_s\mathbf x+\vartheta_s\big),\qquad(s,\mathbf x)\in[t,T]\times\mathbb R^{n(N+1)}.
    \]
    Recall that $\mathbf x_0=(x,0,\ldots,0)\in\mathbb R^{n(N+1)}$. Let \(\widehat{\mathbf X}^{t,\mathbf x_0}\) be the extended state process generated by the feedback
\(\widehat a\), and define
    \[
    \widehat u_s:=\widehat a(s,\widehat{\mathbf X}^{t,\mathbf x_0}_s)=-H_s^{-1}\bigl(\Theta_s\widehat{\mathbf X}^{t,\mathbf x_0}_s+\vartheta_s\bigr),\qquad s\in[t,T].
    \]
    If \(\widehat u\in\mathcal U^{(N)}\), then, by Theorem~\ref{thm:verification},
    \[
    \Phi^{t,x}(\widehat u)=\inf_{u\in\mathcal U^{(N)}}\Phi^{t,x}(u).
    \]
    Consequently, under the assumptions of Proposition~\ref{pro:open-loop-LQ-alpha}, \(\widehat u\) is an open-loop \(\alpha\)-Nash equilibrium.

\section{Application: A network LQ game}\label{sec:application}

    We now apply the preceding open-loop potential-minimization method to the network LQ model in \cite[Section~6]{GuoLiZhang2025}. The aim is to compare the reduced 
finite-dimensional LQ control problem obtained from our method with the ODE system characterizing the $\alpha$-potential minimizer in \cite[Theorem~6.1]{GuoLiZhang2025}.
We prove that the two formulations  yield the same feedback representation.
	
\subsection{The recalled network LQ model}
	
	Throughout this section, let $(\Omega,\mathcal F,\mathbb F,\mathbb P)$ be a complete filtered probability space and $W\equiv(W_1,\ldots,W_N)^\top$ be an $N$-dimensional
Brownian motion, $\mathbb F=(\mathcal F_s)_{s\in[t,T]}$ is the completed natural filtration generated by $W$.
		
	Let $X=(X_1,\ldots,X_N)^\top$ and $u=(u_1,\ldots,u_N)^\top$. For each \(i\in\mathcal I_N\), consider
	\begin{equation}\label{eq:app-state}
		\left\{
		\begin{aligned}
			dX_{i,s}^{u}={}&\bigl(a_i(s)X_{i,s}^{u}+u_{i,s}\bigr)ds+\sigma_i(s)dW_{i,s},\qquad s\in[t,T],\\
			X_{i,t}^{u}={}&x_i,
		\end{aligned}
		\right.
	\end{equation}
	where $a_i,\sigma_i\in C([0,T];\mathbb R)$. Player $i$ has cost
	\begin{equation}\label{eq:app-cost}
		J_i^{t,x}(u)=\mathbb E\Bigg[\int_t^T\Bigg(u_{i,s}^2+\frac1N\sum_{j=1}^Nq_{ij}(X_{i,s}^u-X_{j,s}^u)^2\Bigg)ds+\gamma_i(X_{i,T}^u-d_i)^2\Bigg],
	\end{equation}
where $q_{ij}\ge0$, $\gamma_i\ge0$, and $d_i\in\mathbb R$. The admissible control set is
	\begin{equation*}
		\mathscr A_i=\left\{u_i\in\mathcal H^2([t,T];\mathbb R):\|u_i\|_{\mathcal H^2([t,T];\mathbb R)}\le L\right\},
	\end{equation*}
where $L>0$ is sufficiently large. Set $\mathscr A^{(N)}:=\prod_{i=1}^N\mathscr A_i$.
	
	For a given control $u$, define the variational process $Y^u=(Y_1^u,\ldots,Y_N^u)^\top$ by
	\begin{equation*}
		\left\{
		\begin{aligned}
			dY_{i,s}^{u}={}&\bigl(a_i(s)Y_{i,s}^{u}+u_{i,s}\bigr)ds,\qquad s\in[t,T],\\
			Y_{i,t}^{u}={}&0.
		\end{aligned}
		\right.
	\end{equation*}
	By linearity,
	\begin{equation}\label{eq:app-Xru-Xu-Y}
		X_s^{ru}=X_s^u-(1-r)Y_s^u,\qquad r\in[0,1].
	\end{equation}
	
	The following result is taken from \cite[Theorem~6.1]{GuoLiZhang2025}. For the game \eqref{eq:app-state}--\eqref{eq:app-cost},
	\begin{equation}\label{eq:app-alpha-potential}
		\Phi^{t,x}(u)=\int_0^1\sum_{i=1}^N\frac{\delta J_i^{t,x}}{\delta u_i}(ru;u_i)\,dr
	\end{equation}
is an $\alpha_N$-potential function, and
	\begin{equation*}
		\alpha_N\le C\frac1N\max_{1\le i\le N}\sum_{j\ne i}|q_{ji}-q_{ij}|,
	\end{equation*}
where $C$ depends only on the model bounds and the time horizon.
	
	Let \(\tau\) be a uniform random variable on \([0,1]\), independent of \(\mathcal F_T\), and define
	\begin{equation*}
		\mathbb X_s^{\tau,u}:=\begin{pmatrix}X_s^{\tau u}\\Y_s^u\end{pmatrix}\in\mathbb R^{2N}.
	\end{equation*}
	Set
	\[
	A_s^N=\operatorname{diag}(a_1(s),\ldots,a_N(s)),\qquad\Sigma_s^N=\operatorname{diag}(\sigma_1(s),\ldots,\sigma_N(s)),
	\]
	\[
	\mathcal A_s=\begin{pmatrix}A_s^N&0\\0&A_s^N\end{pmatrix},\qquad\bar\Sigma_s=\begin{pmatrix}\Sigma_s^N\\0\end{pmatrix},\qquad
    \mathcal B=\begin{pmatrix}I_N\\I_N\end{pmatrix}.
	\]
	For $r\in[0,1]$, define
	\[
	D_r=\begin{pmatrix}I_N&-(1-r)I_N\\0&I_N\end{pmatrix},\qquad I_r=D_r\mathcal B=\begin{pmatrix}rI_N\\I_N\end{pmatrix}.
	\]
	Then $\mathbb X^{\tau,u}$ satisfies
	\begin{equation*}
		d\mathbb X_s^{\tau,u}=\bigl(\mathcal A_s\mathbb X_s^{\tau,u}+I_\tau u_s\bigr)ds+\bar\Sigma_s dW_s,\qquad\mathbb X_t^{\tau,u}=\begin{pmatrix}x\\0\end{pmatrix}.
	\end{equation*}
	Let $S:=\mathbb R^{2N}\times[0,1]$. For a given control $u\in\mathscr A^{(N)}$, define the conditional law flow $\mu^{\tau,u}=(\mu_s^{\tau,u})_{s\in[t,T]}$ on $S$ by
	\begin{equation*}
		\mu_s^{\tau,u}:=\mathscr L\bigl(\mathbb X_s^{\tau,u},\tau\mid\mathcal F_s\bigr),\qquad s\in[t,T].
	\end{equation*}
    The \(\alpha\)-potential function can then be written as
	\begin{equation}\label{eq:app-measure-potential}
		\Phi^{t,x}(u)=\mathbb E\left[\int_t^T\int_S\left(\mathbf x^\top Q\mathbf x+2r u_s^\top u_s\right)\,d\mu_s^{\tau,u}(\mathbf x,r)\,ds
        +\int_S\left(\mathbf x^\top \bar Q\mathbf x+2(p_0)^\top\mathbf x\right)\,d\mu_T^{\tau,u}(\mathbf x,r)\right].
	\end{equation}
	Here $Q,\bar Q\in\mathbb S^{2N}$ and $p_0\in\mathbb R^{2N}$ are defined as follows. Let $\widetilde Q\in\mathbb R^{N\times N}$ be given by
	\[
	\widetilde Q_{ii}=\frac1N\sum_{k\ne i}q_{ik},\qquad\widetilde Q_{ij}=-\frac{q_{ij}}{N},\quad i\ne j.
	\]
	Then
	\begin{equation*}
		Q=\begin{pmatrix}0&\widetilde Q^\top\\ \widetilde Q&0\end{pmatrix},\qquad\bar Q=\begin{pmatrix}0&\Gamma\\\Gamma&0\end{pmatrix},\qquad
		p_0=-\begin{pmatrix}0\\\Gamma d\end{pmatrix},
	\end{equation*}
	with $\Gamma=\operatorname{diag}(\gamma_1,\ldots,\gamma_N)$ and $d=(d_1,\ldots,d_N)^\top$.
	
	For \(u\in\mathscr A^{(N)}\), define $F^u=(F_s^u)_{s\in[t,T]}$ by
	\begin{equation*}
		F_s^u:=
		\begin{pmatrix}
			\displaystyle\int_S \mathbf x\,d\mu_s^{\tau,u}(\mathbf x,r)\\[1mm]
			\displaystyle\int_S r\mathbf x\,d\mu_s^{\tau,u}(\mathbf x,r)
		\end{pmatrix}=
		\begin{pmatrix}
			\mathbb E[\mathbb X_s^{\tau,u}\mid\mathcal F_s]\\
			\mathbb E[\tau\mathbb X_s^{\tau,u}\mid\mathcal F_s]
		\end{pmatrix}\in\mathbb R^{4N}.
	\end{equation*}
	\[
	\mathbb A_s=\begin{pmatrix}\mathcal A_s&0\\0&\mathcal A_s\end{pmatrix},\qquad\widetilde I=\left(\frac12I_N,I_N,\frac13I_N,\frac12I_N\right),\qquad
	\mathcal J=\begin{pmatrix}I_{2N}\\ \frac12 I_{2N}\end{pmatrix}.
	\]
	Suppose $M_0,M_1,M_2,M_3$ solve
	\begin{equation}\label{eq:app-M0}
		\dot M_0+\mathcal A^\top M_0+M_0\mathcal A+Q=0,\qquad M_0(T)=\bar Q,
	\end{equation}
	\begin{equation}\label{eq:app-M1}
		\dot M_1+\mathbb A^\top M_1+M_1\mathbb A-K^\top K=0,\qquad M_1(T)=0,
	\end{equation}
	\begin{equation}\label{eq:app-M2}
		\dot M_2+\mathbb A^\top M_2-K^\top\widetilde I M_2=0,\qquad M_2(T)=\begin{pmatrix}p_0\\0\end{pmatrix},
	\end{equation}
and
	\begin{equation}\label{eq:app-M3}
		\dot M_3+\operatorname{tr}\left(\bar\Sigma\bar\Sigma^\top\left[M_0+\mathcal J^\top M_1\mathcal J\right]\right)-(\widetilde I M_2)^\top(\widetilde I M_2)=0,
		\qquad M_3(T)=0,
	\end{equation}
where
	\begin{equation*}
		K=\bigl((0,I_N)M_0,(I_N,0)M_0\bigr)+\widetilde I M_1.
	\end{equation*}
	If there exists a control $\widehat u^{\rm ref}\in\mathscr A^{(N)}$ such that, with $\widehat F_s:=F_s^{\widehat u^{\rm ref}}$, one has
	\begin{equation}\label{eq:app-recalled-feedback}
		\widehat u_s^{\rm ref}=-K_s\widehat F_s-\widetilde I M_2(s),\qquad s\in[t,T],
	\end{equation}
then $\widehat u^{\rm ref}$ is an $\alpha_N$-open-loop Nash equilibrium.
	
\subsection{The reduced Riccati system and feedback comparison}
	
	The recalled construction above still involves the random variable $\tau$. In the present LQ model, we can eliminate $\tau$ from the state by using the linear identity \eqref{eq:app-Xru-Xu-Y}. Define the reduced state
	\begin{equation*}
		\mathbf X_s^u:=\begin{pmatrix}X_s^u\\Y_s^u\end{pmatrix}\in\mathbb R^{2N}.
	\end{equation*}
	Then
	\begin{equation}\label{eq:app-bold-reduced-relation}
		\mathbb X_s^{r,u}=D_r\mathbf X_s^u,\qquad r\in[0,1].
	\end{equation}
	Moreover, $\mathbf X^u$ satisfies
	\begin{equation*}
		\left\{
		\begin{aligned}
			d\mathbf X_s^u&=(\mathcal A_s\mathbf X_s^u+\mathcal Bu_s)ds+\bar\Sigma_s dW_s,\qquad s\in[t,T],\\
			\mathbf X_t^u&=\begin{pmatrix}x\\0\end{pmatrix}.
		\end{aligned}
		\right.
	\end{equation*}
	Substituting \eqref{eq:app-Xru-Xu-Y} into \eqref{eq:app-alpha-potential}, equivalently into
	\eqref{eq:app-measure-potential}, yields
	\begin{equation}\label{eq:app-reduced-Phi}
		\Phi^{t,x}(u)=\mathbb E\left[\int_t^T\left(u_s^\top u_s+(\mathbf X_s^u)^\top\mathcal Q\mathbf X_s^u\right)ds
		+(\mathbf X_T^u)^\top\mathcal G\mathbf X_T^u+2\mathfrak g^\top\mathbf X_T^u\right],
	\end{equation}
	where
	\begin{equation*}
		\mathcal Q=\int_0^1D_r^\top QD_r\,dr,\qquad\mathcal G=\int_0^1D_r^\top\bar QD_r\,dr,\qquad\mathfrak g=\int_0^1D_r^\top p_0\,dr.
	\end{equation*}
	Explicitly,
	\[
	\mathcal Q=
	\begin{pmatrix}
		0&\widetilde Q^\top\\[1mm]
		\widetilde Q&-\frac12(\widetilde Q+\widetilde Q^\top)
	\end{pmatrix},\qquad\mathcal G=
	\begin{pmatrix}
		0&\Gamma\\
		\Gamma&-\Gamma
	\end{pmatrix},\qquad\mathfrak g=-\begin{pmatrix}0\\\Gamma d\end{pmatrix}.
	\]
	
	For \(\mathbf x\in\mathbb R^{2N}\), let \(\mathbf X^{t,\mathbf x,u}\) be the solution of
	\begin{equation*}
		\left\{
		\begin{aligned}
			d\mathbf X_s^{t,\mathbf x,u}&=(\mathcal A_s\mathbf X_s^{t,\mathbf x,u}+\mathcal Bu_s)ds+\bar\Sigma_s dW_s,\qquad s\in[t,T],\\
			\mathbf X_t^{t,\mathbf x,u}&=\mathbf x .
		\end{aligned}
		\right.
	\end{equation*}
	Define the value function of the reduced LQ problem by
	\begin{equation*}
		\begin{aligned}
			\mathcal V(t,\mathbf x):=\inf_{u\in\mathscr A^{(N)}}\mathbb E\bigg[&
			\int_t^T\left(u_s^\top u_s+(\mathbf X_s^{t,\mathbf x,u})^\top\mathcal Q\mathbf X_s^{t,\mathbf x,u}\right)ds
			+(\mathbf X_T^{t,\mathbf x,u})^\top\mathcal G\mathbf X_T^{t,\mathbf x,u}+2\mathfrak g^\top\mathbf X_T^{t,\mathbf x,u}\bigg].
		\end{aligned}
	\end{equation*}
	In particular, the cost in \eqref{eq:app-reduced-Phi} corresponds to the initial condition \(\mathbf x=(x,0)^\top\). 
	
	Completing the square in the Hamiltonian gives the minimizer
	\[
	\widehat a(t,\mathbf x):=-\mathcal B^\top(P_t\mathbf x+p_t),
	\]
	where
    \begin{equation}\label{eq:app-Riccati-P}
	\left\{
	\begin{aligned}
		&\dot P+\mathcal A^\top P+P\mathcal A+\mathcal Q-P\mathcal B\mathcal B^\top P=0,\\
		&P_T={}\mathcal G,
	\end{aligned}
	\right.
    \end{equation}
    \begin{equation}\label{eq:app-Riccati-p}
	\left\{
	\begin{aligned}
		&\dot p+\mathcal A^\top p-P\mathcal B\mathcal B^\top p=0,\\
		&p_T={}\mathfrak g,
	\end{aligned}
	\right.
    \end{equation}
    \begin{equation}\label{eq:app-Riccati-eta}
	\left\{
	\begin{aligned}
		&\dot\eta+\operatorname{tr}(\bar\Sigma\bar\Sigma^\top P)-p^\top\mathcal B\mathcal B^\top p=0,\\
		&\eta_T={}0.
	\end{aligned}
	\right.
    \end{equation}
    Let
    \begin{equation}\label{eq:app-reduced-feedback}
	\widehat u_s=-\mathcal B^\top(P_s\widehat{\mathbf X}_s+p_s).
    \end{equation}
    Here \(\widehat{\mathbf X}\) denote the solution of the state equation under the feedback control \eqref{eq:app-reduced-feedback}. If \(\widehat u\in\mathscr A^{(N)}\), 
then \(\widehat u\) is optimal over \(\mathscr A^{(N)}\).

	We now prove that the reduced Riccati system \eqref{eq:app-Riccati-P}--\eqref{eq:app-Riccati-eta} can be obtained from the Riccati system \eqref{eq:app-M0}--\eqref{eq:app-M3}. By \eqref{eq:app-bold-reduced-relation} and the independence of $\tau$ from $\mathcal F_s$, $F^u$ satisfies
	\begin{equation}\label{eq:app-F-reduced-relation}
		F_s^u=\begin{pmatrix}
			\mathbb E[D_\tau]\\[1mm]
			\mathbb E[\tau D_\tau]
		\end{pmatrix}\mathbf X_s^u=\mathcal T\mathbf X_s^u,
	\end{equation}
	where
	\begin{equation*}
		\mathcal T=\begin{pmatrix}
			\displaystyle\int_0^1D_r\,dr\\[1mm]
			\displaystyle\int_0^1rD_r\,dr
		\end{pmatrix}=
		\begin{pmatrix}
			I_N&-\frac12I_N\\
			0&I_N\\
			\frac12I_N&-\frac16I_N\\
			0&\frac12I_N
		\end{pmatrix}.
	\end{equation*}
    We shall use the following identities, all of which follow directly from the definitions:
    \begin{equation}\label{eq:app-algebra-identities}
        D_r\mathcal A_s=\mathcal A_sD_r,\qquad D_r\mathcal B=I_r,\qquad D_r\bar\Sigma_s=\bar\Sigma_s,\qquad \mathbb A_s\mathcal T=\mathcal T\mathcal A_s,\qquad
        \mathcal B^\top\mathcal T^\top=\widetilde I .
    \end{equation}
    
	\begin{mypro}\label{pro:app-Riccati-projection}
		Assume that $M_0,M_1,M_2,M_3$ solve \eqref{eq:app-M0}--\eqref{eq:app-M3}. Define
		\begin{equation*}
			\bar P_s:=\int_0^1D_r^\top M_0(s)D_r\,dr+\mathcal T^\top M_1(s)\mathcal T,
		\end{equation*}
		\begin{equation*}
			\bar p_s:=\mathcal T^\top M_2(s),\qquad\bar\eta_s:=M_3(s).
		\end{equation*}
		Then $(\bar P,\bar p,\bar\eta)$ solves \eqref{eq:app-Riccati-P}--\eqref{eq:app-Riccati-eta}. Hence, by the uniqueness of the Riccati system
		\[
		P=\bar P,\qquad p=\bar p,\qquad\eta=\bar\eta.
		\]
	\end{mypro}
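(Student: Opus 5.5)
Differentiate the definitions of $\bar P,\bar p,\bar\eta$ in time, substitute \eqref{eq:app-M0}--\eqref{eq:app-M3}, and reduce each resulting expression to the corresponding reduced equation using the commutation identities \eqref{eq:app-algebra-identities}. Uniqueness then follows from the standard well-posedness of \eqref{eq:app-Riccati-P}--\eqref{eq:app-Riccati-eta} on $[t,T]$.

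\emph{Terminal conditions.} These come first and are immediate. $\bar P_T=\int_0^1D_r^\top\bar QD_r\,dr+0=\mathcal G$ by the definition of $\mathcal G$. For the vector term, $\bar p_T=\mathcal T^\top(p_0^\top,0)^\top=(\int_0^1D_r\,dr)^\top p_0$, and this equals $\int_0^1D_r^\top p_0\,dr=\mathfrak g$. Finally $\bar\eta_T=0$.

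\emph{Equation for $\bar P$.} Using $D_r\mathcal A=\mathcal AD_r$, we get
\[
\frac{d}{ds}\int_0^1D_r^\top M_0D_r\,dr=-\mathcal A^\top\!\int D_r^\top M_0D_r-\int D_r^\top M_0D_r\,\mathcal A-\mathcal Q .
\]
Using $\mathbb A\mathcal T=\mathcal T\mathcal A$, the $M_1$-part gives
\[
\frac{d}{ds}\mathcal T^\top M_1\mathcal T=-\mathcal A^\top\mathcal T^\top M_1\mathcal T-\mathcal T^\top M_1\mathcal T\mathcal A+\mathcal T^\top K^\top K\mathcal T .
\]
So $\dot{\bar P}+\mathcal A^\top\bar P+\bar P\mathcal A+\mathcal Q=\mathcal T^\top K^\top K\mathcal T$. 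It therefore remains to prove the key identity
\[
K\mathcal T=\mathcal B^\top\bar P .
\]
Since $\mathcal B^\top\mathcal T^\top=\widetilde I$, we have $\mathcal B^\top\mathcal T^\top M_1\mathcal T=\widetilde IM_1\mathcal T$, which is exactly the $M_1$-part of $K\mathcal T$. For the $M_0$-part I must check
\[
\mathcal B^\top\!\int_0^1D_r^\top M_0D_r\,dr=\bigl((0,I_N)M_0,(I_N,0)M_0\bigr)\mathcal T .
\]
The left side equals $\int_0^1I_r^\top M_0D_r\,dr=\int_0^1(rI_N,I_N)M_0D_r\,dr$. Expanding the right side with the blocks of $\mathcal T$, namely $\int D_r$ and $\int rD_r$, gives $(0,I_N)M_0\int D_r+(I_N,0)M_0\int rD_r$. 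This is the same quantity, because $(rI_N,I_N)=r(I_N,0)+(0,I_N)$. I expect this block bookkeeping, i.e.\ correctly matching the ordering convention in $K$ against the block structure of $\mathcal T$ and $\widetilde I$, to be the main obstacle, so I would verify it entrywise. Once the identity holds, $\mathcal T^\top K^\top K\mathcal T=\bar P\mathcal B\mathcal B^\top\bar P$, which is \eqref{eq:app-Riccati-P}.

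\emph{Equation for $\bar p$.} We have $\dot{\bar p}=-\mathcal T^\top\mathbb A^\top M_2+\mathcal T^\top K^\top\widetilde IM_2$. Transposing $\mathbb A\mathcal T=\mathcal T\mathcal A$ gives $\mathcal T^\top\mathbb A^\top=\mathcal A^\top\mathcal T^\top$, so the first term is $-\mathcal A^\top\bar p$. For the second term, $\widetilde IM_2=\mathcal B^\top\mathcal T^\top M_2=\mathcal B^\top\bar p$, and $\mathcal T^\top K^\top=\bar P\mathcal B$ by the key identity. Hence the second term is $\bar P\mathcal B\mathcal B^\top\bar p$, which yields \eqref{eq:app-Riccati-p}.

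\emph{Equation for $\bar\eta$.} We have $(\widetilde IM_2)^\top\widetilde IM_2=\bar p^\top\mathcal B\mathcal B^\top\bar p$. For the trace term, I need $\operatorname{tr}(\bar\Sigma\bar\Sigma^\top[M_0+\mathcal J^\top M_1\mathcal J])=\operatorname{tr}(\bar\Sigma\bar\Sigma^\top\bar P)$, and this uses $D_r\bar\Sigma=\bar\Sigma$:
\begin{itemize}
\item since $D_r\bar\Sigma=\bar\Sigma$, we get $\bar\Sigma^\top D_r^\top M_0D_r\bar\Sigma=\bar\Sigma^\top M_0\bar\Sigma$, so the $M_0$-part matches;
\item integrating $D_r\bar\Sigma=\bar\Sigma$ and $rD_r\bar\Sigma=r\bar\Sigma$ over $[0,1]$ gives $\mathcal T\bar\Sigma=\mathcal J\bar\Sigma$, so the $M_1$-part matches.
\end{itemize}
This gives \eqref{eq:app-Riccati-eta}.
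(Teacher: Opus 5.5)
Your proposal is correct and follows the paper's proof essentially step by step. It uses the same splitting of $\bar P$ into its $M_0$- and $M_1$-parts, the same commutation identities, the same key identity $K\mathcal T=\mathcal B^\top\bar P$ (via $(rI_N,I_N)=\mathcal B^\top D_r^\top$ and $\widetilde I=\mathcal B^\top\mathcal T^\top$), and the same trace reduction through $D_r\bar\Sigma=\bar\Sigma$ and $\mathcal T\bar\Sigma=\mathcal J\bar\Sigma$.

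Like the paper, you tacitly use $\bar P^\top=\bar P$ when writing $\mathcal T^\top K^\top=\bar P\mathcal B$. This is harmless: $\Delta:=\bar P-\bar P^\top$ satisfies $\dot\Delta+\mathcal A^\top\Delta+\Delta\mathcal A=0$ (because $(K\mathcal T)^\top K\mathcal T$ is symmetric) with $\Delta_T=\mathcal G-\mathcal G^\top=0$, so $\Delta\equiv0$.
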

	
	\begin{proof}
		Write
		\[
		\bar P=P^{(0)}+P^{(1)},\qquad P_s^{(0)}=\int_0^1D_r^\top M_0(s)D_r\,dr,\qquad P_s^{(1)}=\mathcal T^\top M_1(s)\mathcal T.
		\]
		From \eqref{eq:app-M0} and $D_r\mathcal A=\mathcal A D_r$,
		\[
		\begin{aligned}
			\dot P^{(0)}&=\int_0^1D_r^\top\dot M_0D_r\,dr \\
			&=-\int_0^1D_r^\top\mathcal A^\top M_0D_r\,dr-\int_0^1D_r^\top M_0\mathcal A D_r\,dr-\int_0^1D_r^\top QD_r\,dr \\
			&=-\mathcal A^\top P^{(0)}-P^{(0)}\mathcal A-\mathcal Q.
		\end{aligned}
		\]
		From \eqref{eq:app-M1} and $\mathbb A\mathcal T=\mathcal T\mathcal A$,
		\[
		\begin{aligned}
			\dot P^{(1)}&=\mathcal T^\top\dot M_1\mathcal T
			=-\mathcal T^\top\mathbb A^\top M_1\mathcal T-\mathcal T^\top M_1\mathbb A\mathcal T+\mathcal T^\top K^\top K\mathcal T \\
			&=-\mathcal A^\top P^{(1)}-P^{(1)}\mathcal A+(K\mathcal T)^\top(K\mathcal T).
		\end{aligned}
		\]

		We claim that
		\begin{equation}\label{eq:app-KT-BPbar}
			K\mathcal T=\mathcal B^\top\bar P.
		\end{equation}
		Indeed, for any $v\in\mathbb R^{2N}$,
		\[
		\begin{aligned}
			\bigl((0,I_N)M_0,(I_N,0)M_0\bigr)\mathcal T v&=(0,I_N)M_0\int_0^1D_r v\,dr+(I_N,0)M_0\int_0^1rD_r v\,dr \\
			&=\int_0^1\bigl((0,I_N)+r(I_N,0)\bigr)M_0D_r v\,dr.
		\end{aligned}
		\]
		Since
		\[
		(0,I_N)+r(I_N,0)=(rI_N,I_N)=(D_r\mathcal B)^\top=\mathcal B^\top D_r^\top,
		\]
		we get
		\[
		\bigl((0,I_N)M_0,(I_N,0)M_0\bigr)\mathcal T=\mathcal B^\top\int_0^1D_r^\top M_0D_r\,dr=\mathcal B^\top P^{(0)}.
		\]
		Moreover, by \eqref{eq:app-algebra-identities},
		\[
		\widetilde I M_1\mathcal T=\mathcal B^\top\mathcal T^\top M_1\mathcal T=\mathcal B^\top P^{(1)}.
		\]
		Thus \eqref{eq:app-KT-BPbar} follows. Consequently,
		\[
		(K\mathcal T)^\top(K\mathcal T)=\bar P\mathcal B\mathcal B^\top\bar P.
		\]
		Adding the equations for $P^{(0)}$ and $P^{(1)}$ gives
		\[
		\dot{\bar P}+\mathcal A^\top\bar P+\bar P\mathcal A+\mathcal Q-\bar P\mathcal B\mathcal B^\top\bar P=0.
		\]
		The terminal condition is
		\[
		\bar P_T=\int_0^1D_r^\top M_0(T)D_r\,dr+\mathcal T^\top M_1(T)\mathcal T=\int_0^1D_r^\top\bar QD_r\,dr=\mathcal G.
		\]
		Therefore $\bar P$ satisfies \eqref{eq:app-Riccati-P}. 

        Next, from \eqref{eq:app-M2},
		\[
		\begin{aligned}
			\dot{\bar p}=\mathcal T^\top\dot M_2&=-\mathcal T^\top\mathbb A^\top M_2+\mathcal T^\top K^\top\widetilde I M_2
			=-\mathcal A^\top\bar p+(K\mathcal T)^\top\widetilde I M_2.
		\end{aligned}
		\]
		Using \eqref{eq:app-KT-BPbar} and
		\[
		\widetilde I M_2=\mathcal B^\top\mathcal T^\top M_2=\mathcal B^\top\bar p,
		\]
		we obtain
		\[
		\dot{\bar p}+\mathcal A^\top\bar p-\bar P\mathcal B\mathcal B^\top\bar p=0.
		\]
		The terminal condition is
		\[
		\bar p_T=\mathcal T^\top M_2(T)=\mathcal T^\top\begin{pmatrix}p_0\\0\end{pmatrix}=\int_0^1D_r^\top p_0\,dr=\mathfrak g.
		\]
		Thus $\bar p$ satisfies \eqref{eq:app-Riccati-p}.
		
		Finally, since $\bar\eta=M_3$, we use \eqref{eq:app-M3}. By $D_r\bar\Sigma=\bar\Sigma$,
		\[
		\operatorname{tr}\left(\bar\Sigma\bar\Sigma^\top\int_0^1D_r^\top M_0D_r\,dr\right)=\operatorname{tr}(\bar\Sigma\bar\Sigma^\top M_0).
		\]
		Moreover,
		\[
		\mathcal T\bar\Sigma=\begin{pmatrix}
			\int_0^1D_r\bar\Sigma\,dr\\[1mm]
			\int_0^1rD_r\bar\Sigma\,dr
		\end{pmatrix}=
		\begin{pmatrix}\bar\Sigma\\ \frac12\bar\Sigma\end{pmatrix}=\mathcal J\bar\Sigma.
		\]
		Therefore
		\[
		\operatorname{tr}(\bar\Sigma\bar\Sigma^\top\mathcal T^\top M_1\mathcal T)=
		\operatorname{tr}((\mathcal T\bar\Sigma)(\mathcal T\bar\Sigma)^\top M_1)=
		\operatorname{tr}\left(\bar\Sigma\bar\Sigma^\top\mathcal J^\top M_1\mathcal J\right).
		\]
		Also,
		\[
		\bar p^\top\mathcal B\mathcal B^\top\bar p=(\mathcal B^\top\bar p)^\top(\mathcal B^\top\bar p)=(\widetilde I M_2)^\top(\widetilde I M_2).
		\]
		Thus \eqref{eq:app-M3} is exactly
		\[
		\dot{\bar\eta}+\operatorname{tr}(\bar\Sigma\bar\Sigma^\top\bar P)-\bar p^\top\mathcal B\mathcal B^\top\bar p=0.
		\]
		The terminal condition is $\bar\eta_T=M_3(T)=0$. Therefore $\bar\eta$ satisfies \eqref{eq:app-Riccati-eta}.
		
		Therefore $(\bar P,\bar p,\bar\eta)$ solves the Riccati system \eqref{eq:app-Riccati-P}--\eqref{eq:app-Riccati-eta}. Hence, by uniqueness of this Riccati system,
		\[
		P=\bar P,\qquad p=\bar p,\qquad\eta=\bar\eta.
		\]
		The proof is complete.
	\end{proof}
	
\begin{mycor}
	Under the assumptions of Proposition~\ref{pro:app-Riccati-projection}, consider the state-control pair \((\widehat{\mathbf X},\widehat u)\) defined by \eqref{eq:app-reduced-feedback}. If \(\widehat u\in\mathscr A^{(N)}\), then 
	\[
	F_s^{\widehat u}=\mathcal T\widehat{\mathbf X}_s,\qquad s\in[t,T].
	\]
	Moreover, \(\widehat u\) satisfies
	\[
	\widehat u_s=-K_sF_s^{\widehat u}-\widetilde I M_2(s),\qquad s\in[t,T].
	\]
	Equivalently, \(\widehat u\) satisfies the recalled feedback relation \eqref{eq:app-recalled-feedback} with
	\[
	\widehat u^{\rm ref}=\widehat u,\qquad\widehat F_s=F_s^{\widehat u}.
	\]
	Consequently, the feedback control obtained from the reduced finite-dimensional LQ problem coincides with the feedback control obtained from the recalled formulation.
\end{mycor}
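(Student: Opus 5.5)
The plan is to obtain both claims from the identity \eqref{eq:app-F-reduced-relation} and from Proposition~\ref{pro:app-Riccati-projection}, in particular the relation \eqref{eq:app-KT-BPbar}.

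\emph{Step 1: the identity $F_s^{\widehat u}=\mathcal T\widehat{\mathbf X}_s$.} Since $\widehat u\in\mathscr A^{(N)}$ is an admissible, $\mathbb F$-progressively measurable control, we take $u=\widehat u$ in \eqref{eq:app-F-reduced-relation}. For this control the reduced state $\mathbf X^{\widehat u}$ started at $(x,0)^\top$ is exactly $\widehat{\mathbf X}$. By \eqref{eq:app-bold-reduced-relation} we have $\mathbb X_s^{\tau,\widehat u}=D_\tau\widehat{\mathbf X}_s$. Here $\widehat{\mathbf X}_s$ is $\mathcal F_s$-measurable and $\tau$ is independent of $\mathcal F_T$. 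Therefore $\mathbb E[D_\tau\widehat{\mathbf X}_s\mid\mathcal F_s]=\mathbb E[D_\tau]\widehat{\mathbf X}_s$ and $\mathbb E[\tau D_\tau\widehat{\mathbf X}_s\mid\mathcal F_s]=\mathbb E[\tau D_\tau]\widehat{\mathbf X}_s$, which gives $F_s^{\widehat u}=\mathcal T\widehat{\mathbf X}_s$. The one point needing care is that the feedback \eqref{eq:app-reduced-feedback} produces an adapted control. This holds because the closed-loop SDE is linear with continuous coefficients and so has a unique strong solution.

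\emph{Step 2: rewriting the feedback.} By Proposition~\ref{pro:app-Riccati-projection}, $P=\bar P$ and $p=\bar p=\mathcal T^\top M_2$. By \eqref{eq:app-KT-BPbar}, $\mathcal B^\top P_s=\mathcal B^\top\bar P_s=K_s\mathcal T$. By \eqref{eq:app-algebra-identities}, $\mathcal B^\top p_s=\mathcal B^\top\mathcal T^\top M_2(s)=\widetilde I M_2(s)$. Substituting these into \eqref{eq:app-reduced-feedback} gives
\[
\widehat u_s=-K_s\mathcal T\widehat{\mathbf X}_s-\widetilde I M_2(s)=-K_sF_s^{\widehat u}-\widetilde I M_2(s),
\]
where the last equality uses Step 1.

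\emph{Step 3: conclusion.} The display in Step 2 is precisely \eqref{eq:app-recalled-feedback} with $\widehat u^{\rm ref}=\widehat u$ and $\widehat F=F^{\widehat u}$. The two formulations therefore yield the same control.

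The only step that is more than bookkeeping is Step 1, namely justifying the conditional expectations. It reduces to the independence of $\tau$ and the measurability of $\widehat{\mathbf X}_s$ with respect to $\mathcal F_s$.
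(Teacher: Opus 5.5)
Your proposal is correct and follows the paper's proof essentially step for step. You apply the identity $F_s^u=\mathcal T\mathbf X_s^u$ to $u=\widehat u$, then use $P=\bar P$, $p=\mathcal T^\top M_2$, $K\mathcal T=\mathcal B^\top\bar P$ and $\mathcal B^\top\mathcal T^\top=\widetilde I$ to rewrite $-\mathcal B^\top(P_s\widehat{\mathbf X}_s+p_s)$ as $-K_sF_s^{\widehat u}-\widetilde I M_2(s)$; your extra justification via independence of $\tau$ is simply the reasoning behind \eqref{eq:app-F-reduced-relation}, which the paper cites directly.
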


\begin{proof}
	By \eqref{eq:app-F-reduced-relation}, for any admissible control \(u\),
	\[
	F_s^u=\mathcal T\mathbf X_s^u,\qquad s\in[t,T].
	\]
	Applying this identity to \(u=\widehat u\) gives
	\[
	F_s^{\widehat u}=\mathcal T\widehat{\mathbf X}_s.
	\]
	
	By Proposition~\ref{pro:app-Riccati-projection},
	\[
	P_s=\bar P_s,\qquad p_s=\bar p_s.
	\]
	Using \eqref{eq:app-KT-BPbar}, the definition \(\bar p_s=\mathcal T^\top M_2(s)\), and \eqref{eq:app-algebra-identities}, we obtain
	\[
	K_s\mathcal T=\mathcal B^\top P_s,\qquad\widetilde I M_2(s)=\mathcal B^\top p_s.
	\]
	Therefore, by \eqref{eq:app-reduced-feedback},
	\[
	\begin{aligned}
		\widehat u_s=-\mathcal B^\top(P_s\widehat{\mathbf X}_s+p_s)=-K_s\mathcal T\widehat{\mathbf X}_s-\widetilde I M_2(s)=-K_sF_s^{\widehat u}-\widetilde I M_2(s).
	\end{aligned}
	\]
	This is precisely \eqref{eq:app-recalled-feedback} with \(\widehat u^{\rm ref}=\widehat u\) and \(\widehat F_s=F_s^{\widehat u}\). The proof is complete.
\end{proof}
	
\section{Concluding remarks}
	
	In this paper, we studied stochastic LQ differential games through the $\alpha$-potential approach. In the closed-loop setting, we established the equivalence between the probabilistic and PDE representations of the first and second order linear derivatives. In the open-loop setting, we constructed the $\alpha$-potential function, derived an explicit upper bound for the parameter $\alpha$, and reduced the minimization of the $\alpha$-potential function to a finite-dimensional LQ control problem. The network LQ example further shows the consistency between the reduced finite-dimensional control problem and the characterization obtained through the conditional McKean--Vlasov approach.

    Several extensions would be worth investigating. In particular, it would be interesting to consider games under partial observation or with delays in the state dynamics. 
Another natural direction is to extend the $\alpha$-potential approach to stochastic leader–follower games.


\begin{thebibliography}{99}

    \bibitem{CachonZipkin1999}
	G. P. Cachon and P. H. Zipkin,
	\newblock Competitive and cooperative inventory policies in a two-stage supply chain,
	\newblock {\it Management Science}, 45(7), 936--953, 1999.

    \bibitem{CanalesGallego2010}
	M. Canales and J. R. Gallego,
	\newblock Potential game for joint channel and power allocation in cognitive radio networks,
	\newblock {\it Electronics Letters}, 46(24), 1632--1634, 2010.

	\bibitem{DiHuWangZhang2025}
	X. Di, A. Hu, Z. Wang and Y. Zhang,
	\newblock $\alpha$-potential games for decentralized control of connected and automated vehicles,
	\newblock {\it arXiv:2512.05712}, 2025.

    \bibitem{GuoLiMaheshwariSastryWu2026}
	X. Guo, X. Li, C. Maheshwari, S. Sastry and M. Wu,
	\newblock Markov $\alpha$-potential games,
	\newblock {\it IEEE Transactions on Automatic Control}, 71(1), 275--290, 2026.
 
    \bibitem{GuoLiZhangBSDE2025}
	X. Guo, X. Li and L. Zhang,
	\newblock BSDE approach for $\alpha$-potential stochastic differential games,
	\newblock {\it arXiv:2507.13256}, 2025.

    \bibitem{GuoLiZhang2025}
	X. Guo, X. Li and Y. Zhang,
	\newblock An $\alpha$-potential game framework for $N$-player dynamic games,
	\newblock {\it SIAM Journal on Control and Optimization}, 63(4), 2964--3005, 2025.
	
	\bibitem{GuoLiZhangJump2025}
	X. Guo, X. Li and Y. Zhang,
	\newblock Distributed games with jumps: an $\alpha$-potential game approach,
	\newblock arXiv:2508.01929, 2025.

	\bibitem{GuoWangZhang2026}
	X. Guo, M. Wang and Y. Zhang,
	\newblock Limit theory for $N$-player $\alpha$-potential games,
	\newblock {\it arXiv:2606.09815}, 2026.

    \bibitem{GuoZhang2025}
	X. Guo and Y. Zhang,
	\newblock Towards an analytical framework for dynamic potential games,
	\newblock {\it SIAM Journal on Control and Optimization}, 63(2), 1213--1242, 2025.

	\bibitem{Pham2009}
	H. Pham,
	\newblock {\it Continuous-Time Stochastic Control and Optimization with Financial Applications},
	\newblock Springer, 2009.

	\bibitem{PlankZhang2026}
	P. Plank and Y. Zhang,
	\newblock Learning distributed equilibria in linear-quadratic stochastic differential games: an $\alpha$-potential approach,
	\newblock {\it arXiv:2602.16555}, 2026.

	\bibitem{MondererShapley1996}
	D. Monderer and L. S. Shapley,
	\newblock Potential games,
	\newblock {\it Games and Economic Behavior}, 14(1), 124--143, 1996.
	
	\bibitem{Nash1950}
	J. F. Nash, Jr.,
	\newblock Equilibrium points in \(n\)-person games,
	\newblock {\it Proceedings of the National Academy of Sciences of the United States of America}, 36(1), 48--49, 1950.
	
	\bibitem{TaoFeijooLorenzo2024}
	S. Tao and A. E. Feij\'oo-Lorenzo,
	\newblock Multi-objective optimization of clustered wind farms based on potential game approach,
	\newblock {\it Ocean Engineering}, 300, 117291, 2024.
	
    \bibitem{VonNeumannMorgenstern1944}
	J. von Neumann and O. Morgenstern,
	\newblock {\it Theory of Games and Economic Behavior},
	\newblock Princeton University Press, 1944.

	\bibitem{Yamamoto2015}
	K. Yamamoto,
	\newblock A comprehensive survey of potential game approaches to wireless networks,
	\newblock {\it IEICE Transactions on Communications}, E98-B(9), 1804--1823, 2015.

\end{thebibliography}
\end{document}